
\documentclass{amsart}
\usepackage{amssymb}
\usepackage{amsfonts}
\usepackage{geometry}

\setcounter{MaxMatrixCols}{10}

\newtheorem{theorem}{Theorem}
\theoremstyle{plain}

\newtheorem{corollary}[theorem]{Corollary}

\newtheorem{definition}[theorem]{Definition}

\newtheorem{lemma}[theorem]{Lemma}

\newtheorem{proposition}[theorem]{Proposition}
\newtheorem{remark}[theorem]{Remark}

\numberwithin{equation}{section}
\input{tcilatex}
\geometry{left=1in,right=1in,top=1in,bottom=1in}

\begin{document}
\title[The corona problem for kernel multiplier algebras]{The corona problem for kernel multiplier algebras}
\author[E. T. Sawyer]{Eric T. Sawyer$^\dagger$}
\address{Eric T. Sawyer, Department of Mathematics and Statistics\\
McMaster University\\
1280 Main Street West\\
Hamilton, Ontario L8S 4K1 Canada}
\thanks{$\dagger $ Research supported in part by a grant from the National
Science and Engineering Research Council of Canada.}
\email{sawyer@mcmaster.ca}
\author[B. D. Wick]{Brett D. Wick$^\ddagger$}
\address{Brett D. Wick, Department of Mathematics\\
Washington University -- St. Louis\\
One Brookings Drive\\
St. Louis, MO USA 63130}
\email{wick@math.wustl.edu}
\thanks{$\ddagger$ Research supported in part by National Science Foundation
DMS grant \# 1603246 and \#1560955.}
\date{\today }

\begin{abstract}
We prove an alternate Toeplitz corona theorem for the algebras of pointwise
kernel multipliers of Besov-Sobolev spaces on the unit ball in $\mathbb{C}%
^{n}$, and for the algebra of bounded analytic functions on certain strictly
pseudoconvex domains and polydiscs in higher dimensions as well. This
alternate Toeplitz corona theorem extends to more general Hilbert function
spaces where it does not require the complete Pick property. Instead, the
kernel functions $k_{x}\left( y\right) $ of certain Hilbert function spaces $%
\mathcal{H}$ are assumed to be invertible multipliers on $\mathcal{H}$, and
then we continue a research thread begun by Agler and McCarthy in 1999, and
continued by Amar in 2003, and most recently by Trent and Wick in 2009. In
dimension $n=1$ we prove the corona theorem for the kernel multiplier
algebras of Besov-Sobolev Banach spaces in the unit disk, extending the result for Hilbert spaces $H^\infty \cap Q_p$ by A. Nicolau and J. Xiao.  
\end{abstract}

\maketitle
\tableofcontents

\section{Introduction}

In 1962 L. Carleson \cite{Car} proved the corona theorem for the algebra of
bounded analytic functions on the unit disk. The proof used a beautiful
`corona construction' together with properties of Blaschke products. While
there is a large literature on corona theorems for domains in one complex
dimension (see e.g. \cite{Nik}), progress in higher dimensions prior to 2011
had been limited to the $H^{p}$ corona theorem on various bounded domains in 
$\mathbb{C}^{n}$, and weaker results restricting $N$ to $2$ generators. In
fact, apart from the simple cases in which the maximal ideal space of the
algebra can be identified with a compact subset of $\mathbb{C}^{n}$, no
complete corona theorem was proved in higher dimensions until the 2011
results of S. Costea and the authors in which the corona theorem was
established for the multiplier algebras $M_{H_{n}^{2}}$ and \thinspace $M_{%
\mathcal{D}_{n}}$ of the Drury-Arveson Hardy space $H_{n}^{2}$ and the
Dirichlet space $\mathcal{D}_{n}$ on the ball in $n$ dimensions. These
latter results used the abstract Toeplitz corona theorem for Hilbert
function spaces with a complete Pick kernel, see Ball, Trent and Vinnikov 
\cite{BaTrVi} and Ambrosie and Timotin \cite{AmTi}. The unresolved corona
question for the algebra of bounded analytic functions on the ball in higher
dimensions has remained a tantalizing problem for over half a century now
(see e.g. \cite{CoSaWi2} and \cite{DoKrSaTrWi} for a more detailed history
of this problem to date). We note in particular that Varopoulos \cite{Var}
gave an example of Carleson measure data in dimension $n=2$ for which there
is \emph{no} bounded solution to the $\overline{\partial }$ equation. This
poses a significant obstacle to using the $\overline{\partial }$ equation
for the multiplier problem, and suggests a more operator theoretic approach
akin to the Toeplitz corona theorem in order to solve the corona problem in
higher dimensions.

In this paper we prove in particular an alternate Toeplitz corona theorem
for all of the algebras of kernel multipliers of Besov-Sobolev spaces $%
B_{2}^{\sigma }\left( \mathbb{B}_{n}\right) $ on the ball. These spaces
include $H^{\infty }\left( \mathbb{B}_{n}\right) $, and the alternate
Toeplitz corona theorem also extends to the algebra $H^{\infty }\left(
\Omega \right) $ of bounded analytic functions on $\Omega $, where $\Omega $
is either the unit polydisc $\mathbb{D}^{n}$ in $\mathbb{C}^{n}$, a
sufficiently small $C^{\infty }$ perturbation of the unit ball $\mathbb{B}%
_{n}$, or a bounded strictly pseudoconvex homogeneous complete circular
domain in $\mathbb{C}^{n}$. Moreover, this alternate Toeplitz corona theorem
extends to the kernel multiplier space of certain Hilbert function spaces 
\emph{without} assuming the complete Pick property. This essentially shows
that whenever a Hilbert space has one of these special kernels, then the
Corona Property for its kernel multiplier algebra reduces to what we call
the Convex Poisson Property, a property which can be addressed by methods
involving solution of the $\overline{\partial }$ problem.

To illustrate in a very special case, we show that for $\Omega $ as above,
the algebra $H^{\infty }\left( \Omega \right) $ has the \emph{Corona Property%
} if and only if the Bergman space $A^{2}\left( \Omega \right) $ has the 
\emph{Convex Poisson Property} if and only if the Hardy space $H^{2}\left(
\Omega \right) $ has the \emph{Convex Poisson Property} (in order to define
the Hardy space we need to assume that $\partial \Omega $ is $C^{2}$). The
Corona Property for $H^{\infty }\left( \Omega \right) $ is this: given $%
\varphi _{1},\ldots ,\varphi _{N}\in H^{\infty }\left( \Omega \right) $
satisfying%
\begin{equation*}
1\geq \max \left\{ \left\vert \varphi _{1}\left( z\right) \right\vert
^{2},\ldots ,\left\vert \varphi _{N}\left( z\right) \right\vert ^{2}\right\}
\geq c^{2}>0,\ \ \ \ \ z\in \Omega ,
\end{equation*}%
there are a positive constant $C$ and $f_{1},\ldots ,f_{N}\in H^{\infty
}\left( \Omega \right) $ satisfying 
\begin{eqnarray*}
\max \left\{ \left\vert f_{1}\left( z\right) \right\vert ^{2},\ldots
,\left\vert f_{N}\left( z\right) \right\vert ^{2}\right\} &\leq &C^{2},\ \ \
\ \ z\in \Omega , \\
\varphi _{1}\left( z\right) f_{1}\left( z\right) +\cdots +\varphi _{N}\left(
z\right) f_{N}\left( z\right) &=&1,\ \ \ \ \ z\in \Omega .
\end{eqnarray*}%
For points $\mathbf{a}=\left( a_{1},\ldots ,a_{M}\right) \in \Omega ^{M}$
and $\mathbf{\theta }=\theta _{0},\ldots ,\theta _{M}\in \left[ 0,1\right]
^{M+1}$, and for $h\in \mathcal{H}=A^{2}\left( \Omega \right) $ or $%
H^{2}\left( \Omega \right) $, where $\Omega $ is as above, set $\widetilde{%
k_{a_{0}}}\equiv 1$ by convention. Then we have%
\begin{equation*}
\left\Vert h\right\Vert _{\mathcal{H}^{\mathbf{a,\theta }}}^{2}\equiv
\sum_{m=0}^{M}\theta _{m}\int_{\Omega }\left\vert h\right\vert
^{2}\left\vert \widetilde{k_{a_{m}}}\right\vert ^{2}dv<\infty .
\end{equation*}%
The Convex Poisson Property for the Hilbert space $\mathcal{H}$ (either
Bergman or Hardy) is then this: given $\varphi _{1},\ldots ,\varphi _{N}\in
H^{\infty }\left( \Omega \right) $ satisfying%
\begin{equation*}
1\geq \max \left\{ \left\vert \varphi _{1}\left( z\right) \right\vert
^{2},\ldots ,\left\vert \varphi _{N}\left( z\right) \right\vert ^{2}\right\}
\geq c^{2}>0,\ \ \ \ \ z\in \Omega ,
\end{equation*}%
there is a positive constant $C$ such that for all points $\mathbf{a}=\left(
a_{1},\ldots ,a_{M}\right) \in \Omega ^{M}$ and all $\mathbf{\theta }=\theta
_{0},\ldots ,\theta _{M}\in \left[ 0,1\right] ^{M+1}$ with $%
\sum_{m=0}^{M}\theta _{m}=1$, there are $f_{1},\ldots ,f_{N}\in \mathcal{H}$
satisfying 
\begin{eqnarray*}
\sum_{\ell =1}^{N}\left\Vert f_{\ell }\right\Vert _{\mathcal{H}^{\mathbf{%
a,\theta }}}^{2} &\leq &C^{2}, \\
\varphi _{1}\left( z\right) f_{1}\left( z\right) +\cdots +\varphi _{N}\left(
z\right) f_{N}\left( z\right) &=&1,\ \ \ \ \ z\in \Omega .
\end{eqnarray*}

More generally we prove an \emph{alternate} Toeplitz corona theorem for the
kernel multiplier space $K_{\mathcal{H}}$ of a Hilbert function space $%
\mathcal{H}$ whose reproducing kernel $k$ need not be a complete Pick
kernel, but must have the property (among others) that the kernel functions $%
k_{a}$ are invertible multipliers on $\mathcal{H}$. Here the Banach space $%
K_{\mathcal{H}}$ of kernel multipliers consists of those functions $\varphi
\in \mathcal{H}$ such that $\sup_{a\in \Omega }\frac{\left\Vert \varphi
k_{a}\right\Vert _{\mathcal{H}}}{\left\Vert k_{a}\right\Vert _{\mathcal{H}}}%
<\infty $. The roots of this abstract result can be traced back to a
research thread begun by Agler and McCarthy \cite{AgMc} in the bidisc using
Ando's theorem to reduce the corona problem to estimates on weighted Hardy
spaces (see also \cite[Chapters 11 and 13]{AgMc2} for a nice survey of
related prior work), and continued by work of Amar \cite{Amar} who
introduced the use of the von Neumann minimax theorem to circumvent Ando's
theorem and go beyond the bidisc, and more recently by work of Trent and the
second author \cite{TrWi} who further reduced matters to checking weights
whose densities are the modulus squared of nonvanishing $H^{\infty }$
functions whose boundary values have bounded reciprocals. In this paper we
extend and refine this approach to more general Hilbert function spaces and
reduce matters to solving the Bezout equation $\varphi \cdot f=1$ with
solutions $f$ in an \emph{analogue} of a weighted $L^{2}$ space - e.g. the
method applies to Besov-Sobolev spaces on the ball whose norms \emph{cannot}
be given as an $L^{2}$ norm with respect to some measure.

In dimension $n=1$ we can prove new Corona theorems for the kernel multiplier algebras of the Besov-Sobolev spaces $B_p^{\sigma}$ of the unit disk (See Section 7 for definitions of these spaces).  In the special case of $B_2^{\sigma}(\mathbb{D})$, the kernel multiplier algebra coincides with the Hilbert spaces $H^\infty\cap Q_{2\sigma}$, see Nicolau and Xiao \cite{NiXi}, Ess\'en and Xiao \cite{EsXi}, Aulaskari, Stegenga and Xiao \cite{AuStXi}, or Xiao \cite{Xia, Xia3} for the definitions of these spaces.

\subsection{Organization of the paper}

In Section 2 of this paper we introduce background material on a Hilbert
function space on a set $\Omega $, as well as our new definition of the
Banach space $K_{\mathcal{H}}$ of kernel multipliers on a Hilbert function
space $\mathcal{H}$. The space $K_{\mathcal{H}}$ is often an algebra and
plays the pivotal role in our alternate Toeplitz corona theorem, given as
Theorem \ref{alternate} below.  

In Section 3 we use von Neumann's minimax theorem, following the thread
begun by Amar \cite{Amar} and continued by Trent and Wick \cite{TrWi}, to
characterize those vectors of corona data $\varphi \in \oplus ^{N}L^{\infty
} $ for which solutions $f\in \oplus ^{N}K_{\mathcal{H}}$ exist to Bezout's
equation $\varphi \cdot f=1$ in $\Omega $.

In Section 4 we use this characterization to obtain an alternate Toeplitz
corona theorem for the spaces $K_{\mathcal{H}}$ that uses a Convex Poisson
Property instead of a Baby Corona Property as in the Toeplitz corona theorem
for $M_{\mathcal{H}} $ in \cite{BaTrVi} and \cite{AmTi}. Instead of
requiring a complete Pick kernel we require the following four properties:

\begin{enumerate}
\item the reproducing kernels $k_{a}$ for $\mathcal{H}$ are invertible
multipliers of $\mathcal{H}$, and the map $a\rightarrow k_{a}$ is lower
semicontinuous from $\Omega $ to $M_{\mathcal{H}}$,

\item the kernel multiplier space $K_{\mathcal{H}}$ is an algebra,

\item the constant function $1$ is in $\mathcal{H}$, and

\item the unit ball of $\mathcal{H}$ enjoys a Montel property.
\end{enumerate}

This alternate Toeplitz corona theorem thus reduces the Corona Property for $%
K_{\mathcal{H}}$ to the Convex Poisson Property for $\mathcal{H}$. As an
application, we give examples of domains $\Omega $ in $\mathbb{C}^{n}$ for
which the Corona Property for $H^{\infty }\left( \Omega \right) $ is
equivalent to the Convex Poisson Property with $\mathcal{H}$ taken to be
either the Bergman or Hardy space on $\Omega $. Section 5 is devoted to
these higher dimensional examples, which of course include the ball and
polydisc. However, we are unable to obtain any new corona theorems in higher
dimensions.

Then in Section 6, we discuss the Invertible Multiplier Property, which when
it holds, gives corona theorems in many situations as a corollary of our
alternate Toeplitz corona theorem. However, the Invertible Multiplier
Property is known to hold only for the Szeg\"{o} kernel $\frac{1}{1-%
\overline{a}z}$ in dimension $n=1$, where it can be used to prove a corona
theorem in the disk, the annulus, and any other planar domain for which the
reproducing kernel is essentially the Szeg\"{o} kernel $\frac{1}{1-\overline{%
a}z}$. In particular, we show the Invertible Multiplier Property fails for
the Szeg\"{o} kernel on both the ball and polydisc in higher dimensions.

Finally, in Section 7:

\begin{enumerate}
\item we prove that the space $K_{\mathcal{H}}$ is an algebra when $\mathcal{%
H}$ is a Besov-Sobolev Hilbert space $B_{2}^{\sigma }\left( \mathbb{B}%
_{n}\right) $ in the ball, $\sigma >0$, and conclude that $K_{\mathcal{H}}$
has the Corona Property if and only if the Convex Poisson Property holds for 
$B_{2}^{\sigma }\left( \mathbb{B}_{n}\right) $,

\item we provide a new proof of A. Nicolau and J. Xiao's result that the Corona Property holds for the algebras $K_{\mathcal{H%
}}$ when $\mathcal{H}=B_{2}^{\sigma }\left( \mathbb{D}\right) $ is a
Besov-Sobolev space in the disk with $\sigma >0$, see \cite{NiXi}; and we further demonstrate that the Corona Property holds for the algebra $K_{p}^{\sigma}$ when $0<\sigma<\frac{1}{p}$ and $1<p<\infty$, where this space is defined using the standard reproducing kernels and duality pairings for the spaces of holomorphic functions $B_p^{\sigma}(\mathbb{D})$,
\item and we show the existence of Hilbert function spaces $\mathcal{H}$ of
solutions to an elliptic PDE, to which our alternate Toeplitz corona theorem
applies, and which are \emph{not} spaces of holomorphic functions.
\end{enumerate}

\begin{remark}
After this paper appeared in print we were alerted to related work on Besov spaces by Kaptanoglu in \cite{Kap} and Beatrous and Burbea \cite{BeBu}.  In particular there is some overlap with the foundational material for Besov spaces appearing in this paper and these papers just cited.
\end{remark}

\section{Preliminaries}

We begin with a quick review of reproducing kernel Hilbert spaces, also known as 
Hilbert function spaces.

\subsection{Hilbert function spaces}

A Hilbert space $\mathcal{H}$ is said to be a \emph{Hilbert function space}
(also called a reproducing kernel Hilbert space) on a set $\Omega $ if the
elements of $\mathcal{H}$ are complex-valued functions $f$ on $\Omega $ with
the usual vector space structure, such that each point evaluation on $%
\mathcal{H}$ is a nonzero continuous linear functional, i.e. for every $x\in
\Omega $ there is a positive constant $C_{x}$ such that%
\begin{equation}
\left\vert f\left( x\right) \right\vert \leq C_{x}\left\Vert f\right\Vert _{%
\mathcal{H}},\ \ \ \ \ \forall f\in \mathcal{H},  \label{pointeval}
\end{equation}%
and there is some $f$ with $f\left( x\right) \neq 0$. Since point evaluation
at $x\in \Omega $ is a continuous linear functional, there is a unique
element $k_{x}\in \mathcal{H}$ such that%
\begin{equation*}
f\left( x\right) =\left\langle f,k_{x}\right\rangle _{\mathcal{H}}\text{ for
all }x\in \Omega .
\end{equation*}%
The element $k_{x}$ is called the reproducing kernel at $x$, and satisfies%
\begin{equation*}
k_{y}\left( x\right) =\left\langle k_{y},k_{x}\right\rangle _{\mathcal{H}},\
\ \ \ \ x,y\in \Omega .
\end{equation*}%
In particular we have%
\begin{equation*}
\left\Vert k_{x}\right\Vert _{\mathcal{H}}^{2}=\left\langle
k_{x},k_{x}\right\rangle _{\mathcal{H}}=k_{x}\left( x\right) ,
\end{equation*}%
and so the normalized reproducing kernel is given by $\widetilde{k_{x}}%
\equiv \frac{k_{x}}{\sqrt{k_{x}\left( x\right) }}$.

The function $k\left( y,x\right) \equiv \left\langle
k_{x},k_{y}\right\rangle _{\mathcal{H}}=k_{x}\left( y\right) $ is
self-adjoint ($k\left( x,y\right) =\overline{k\left( y,x\right) }$), and for
every finite subset $\left\{ x_{i}\right\} _{i=1}^{N}$ of $\Omega $, the
matrix $\left[ k\left( x_{i},x_{j}\right) \right] _{1\leq i,j\leq N}$ is
positive semidefinite:%
\begin{equation*}
\sum_{i,j=1}^{N}\xi _{i}\overline{\xi _{j}}k\left( x_{j},x_{i}\right)
=\sum_{i,j=1}^{N}\xi _{i}\overline{\xi _{j}}\left\langle
k_{x_{i}},k_{x_{j}}\right\rangle _{\mathcal{H}}=\left\langle
\sum_{i=1}^{N}\xi _{i}k_{x_{i}},\sum_{j=1}^{N}\xi _{j}k_{x_{j}}\right\rangle
_{\mathcal{H}}=\left\Vert \sum_{i=1}^{N}\xi _{i}k_{x_{i}}\right\Vert _{%
\mathcal{H}}^{2}\geq 0.
\end{equation*}%
Altogether we have shown that $k$ is a \emph{kernel function} in the
following sense.

\begin{definition}
\label{defker}A function $k:\Omega \times \Omega \rightarrow \mathbb{C}$ is
a \emph{kernel function} on $\Omega $ if $k$ is self-adjoint and positive on
the diagonal,\ and\ if for every finite subset $x=\left\{ x_{i}\right\}
_{i=1}^{N}\in \Omega ^{N}$ of $\Omega $, the matrix $\left[ k\left(
x_{i},x_{j}\right) \right] _{1\leq i,j\leq N}$ is positive semidefinite,
written $\left[ k\left( x_{i},x_{j}\right) \right] _{1\leq i,j\leq
N}\succcurlyeq 0$, i.e.%
\begin{equation}
\sum_{i,j=1}^{N}\xi _{i}\overline{\xi _{j}}k\left( x_{i},x_{j}\right) \geq
0,\ \ \ \ \ \xi \in \mathbb{C}^{N},x\in \Omega ^{N},N\geq 1.
\label{possemidef}
\end{equation}%
We write $k\succcurlyeq 0$ if $k$ is a kernel function.
\end{definition}

E. H. Moore discovered the following bijection between Hilbert function
spaces and kernel functions. Given a kernel function $k$ on $\Omega \times
\Omega $, define an inner product $\left\langle \cdot ,\cdot \right\rangle _{%
\mathcal{H}_{k}}$ on finite linear combinations $\sum_{i=1}^{N}\xi
_{i}k_{x_{i}}$ of the functions $k_{x_{i}}\left( \zeta \right) =k\left(
\zeta ,x_{i}\right) $, $\zeta \in \Omega $, by%
\begin{equation}
\left\langle \sum_{i=1}^{N}\xi _{i}k_{x_{i}},\sum_{j=1}^{N}\eta
_{j}k_{x_{j}}\right\rangle _{\mathcal{H}_{k}}=\sum_{i,j=1}^{N}\xi _{i}%
\overline{\eta _{j}}k\left( x_{j},x_{i}\right) .  \label{inpro}
\end{equation}%
If the forms in (\ref{possemidef}) are positive definite, then finite
collections of the kernel functions $k_{x_{i}}$ are linearly independent,
and the inner product in (\ref{inpro}) is well-defined. See \cite[page 19]%
{AgMc2} for a proof that (\ref{inpro}) is well-defined in general.

\begin{definition}
\label{defRKHS}Given a kernel function $k:\Omega \times \Omega \rightarrow 
\mathbb{C}$ on a set $\Omega $, define the associated Hilbert function space 
$\mathcal{H}_{k}$ to be the completion of the functions $\sum_{i=1}^{N}\xi
_{i}k_{x_{i}}$ under the norm corresponding to the inner product (\ref{inpro}%
).
\end{definition}

\begin{proposition}
\label{same kernel}The Hilbert space $\mathcal{H}_{k}$ has kernel $k$. If $%
\mathcal{H}$\ and $\mathcal{H}^{\prime }$ are Hilbert function spaces on $%
\Omega $ that have the same kernel function $k$, then there is an isometry
from $\mathcal{H}$ onto $\mathcal{H}^{\prime }$ that preserves the kernel
functions $k_{x}$, $x\in \Omega $.
\end{proposition}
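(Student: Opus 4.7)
The plan is to establish the two assertions in sequence, treating the construction of $\mathcal{H}_k$ as an honest function space first, then handling uniqueness up to kernel-preserving isometry as a direct density argument.

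For the first assertion, I would begin with the pre-Hilbert space $\mathcal{H}_k^0$ of finite linear combinations $\sum \xi_i k_{x_i}$ equipped with the inner product $(\ref{inpro})$. The first task is to realize its abstract completion as a space of complex-valued functions on $\Omega$. Given a Cauchy sequence $\{f_n\}$ in $\mathcal{H}_k^0$, the estimate
\begin{equation*}
\left|f_n(x) - f_m(x)\right| = \left|\langle f_n - f_m, k_x\rangle_{\mathcal{H}_k^0}\right| \leq \|f_n - f_m\|_{\mathcal{H}_k^0}\, k(x,x)^{1/2}
\end{equation*}
shows $\{f_n(x)\}$ is Cauchy in $\mathbb{C}$, so $f(x) := \lim f_n(x)$ defines a function on $\Omega$. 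Two equivalent Cauchy sequences yield the same function, so we get a well-defined linear map from the abstract completion into $\mathbb{C}^{\Omega}$. The map is injective: if $\|f_n\|_{\mathcal{H}_k^0} \to 0$ and $f_n \to f$ pointwise, then for any finite linear combination $g = \sum \eta_j k_{y_j}$ we have $\langle f_n, g\rangle = \sum \overline{\eta_j} f_n(y_j) \to \sum \overline{\eta_j} f(y_j)$, while also $\langle f_n, g\rangle \to 0$, forcing $f(y_j) = 0$ for every $y_j$; since this must hold for every $g$, $f \equiv 0$. Thus the completion is identified with a vector space of functions, which we take as $\mathcal{H}_k$. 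Continuity of point evaluation and the reproducing property are checked first on $\mathcal{H}_k^0$, where by definition $\langle k_{x_i}, k_x\rangle = k(x,x_i) = k_{x_i}(x)$, so $\langle f, k_x\rangle = f(x)$ for any $f \in \mathcal{H}_k^0$; extending by continuity gives the reproducing property on all of $\mathcal{H}_k$, and the Cauchy-Schwarz inequality then yields $(\ref{pointeval})$.

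For the second assertion, suppose $\mathcal{H}$ and $\mathcal{H}'$ are Hilbert function spaces on $\Omega$ with the same kernel function $k$. Write $k_x^{\mathcal{H}}$ and $k_x^{\mathcal{H}'}$ for their respective reproducing kernels; as functions on $\Omega$ both equal $k(\cdot,x)$. Define $U$ on the linear span of $\{k_x^{\mathcal{H}} : x \in \Omega\}$ by $U\!\left(\sum \xi_i k_{x_i}^{\mathcal{H}}\right) = \sum \xi_i k_{x_i}^{\mathcal{H}'}$. The fact that both spaces have the same kernel makes $U$ well-defined and isometric, since
\begin{equation*}
\left\|\sum \xi_i k_{x_i}^{\mathcal{H}}\right\|_{\mathcal{H}}^{2} = \sum_{i,j} \xi_i \overline{\xi_j} k(x_j, x_i) = \left\|\sum \xi_i k_{x_i}^{\mathcal{H}'}\right\|_{\mathcal{H}'}^{2}.
\end{equation*}
The key density fact is that $\operatorname{span}\{k_x : x \in \Omega\}$ is dense in every Hilbert function space with kernel $k$: if $f$ is orthogonal to every $k_x$, then $f(x) = \langle f, k_x\rangle = 0$ for all $x$, so $f \equiv 0$. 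Therefore $U$ extends uniquely to a surjective isometry from $\mathcal{H}$ onto $\mathcal{H}'$ fixing the kernel functions.

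The only delicate step is the injectivity argument in the first paragraph — ensuring that the abstract Cauchy-sequence completion embeds faithfully in $\mathbb{C}^{\Omega}$, so that $\mathcal{H}_k$ is a genuine space of functions rather than a quotient thereof. Everything else reduces to the algebraic identity $\langle k_x, k_y\rangle = k(y,x)$ combined with continuity and density.
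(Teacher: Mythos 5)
The paper states this proposition without proof (it is the Moore--Aronszajn theorem; the only ingredient the paper addresses is the well-definedness of the inner product (\ref{inpro}), which it defers to \cite[page 19]{AgMc2}), so your argument stands on its own. Your second paragraph --- defining the isometry on the span of the $k_{x}$, getting isometry and well-definedness from the common Gram matrix, and getting surjectivity from density of the span of the kernel functions in any Hilbert function space with kernel $k$ --- is complete and correct, and the architecture of your first paragraph is the standard one.

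The gap is in the step you yourself single out as the delicate one. To show that the abstract completion embeds injectively into $\mathbb{C}^{\Omega }$ you must prove: if $\left\{ f_{n}\right\} \subset \mathcal{H}_{k}^{0}$ is Cauchy and $f_{n}\left( x\right) \rightarrow 0$ for every $x\in \Omega $, then $\left\Vert f_{n}\right\Vert _{\mathcal{H}_{k}^{0}}\rightarrow 0$. What you actually prove is the converse --- that $\left\Vert f_{n}\right\Vert \rightarrow 0$ forces the pointwise limit to vanish --- which is only the statement that the zero class maps to the zero function; it already follows from your pointwise estimate $\left\vert f_{n}\left( x\right) \right\vert \leq \left\Vert f_{n}\right\Vert k\left( x,x\right) ^{1/2}$ and holds for any linear map, so it says nothing about the kernel of the embedding. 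The ingredients for the correct direction are all on your page: assume $f_{n}\left( x\right) \rightarrow 0$ pointwise, let $F$ be the limit of $\left\{ f_{n}\right\} $ in the abstract completion, and note that for each fixed $g=\sum_{j}\eta _{j}k_{y_{j}}\in \mathcal{H}_{k}^{0}$ one has $\left\langle F,g\right\rangle =\lim_{n}\left\langle f_{n},g\right\rangle =\lim_{n}\sum_{j}\overline{\eta _{j}}f_{n}\left( y_{j}\right) =0$; since $\mathcal{H}_{k}^{0}$ is dense in its completion by construction, $F=0$, i.e. $\left\Vert f_{n}\right\Vert \rightarrow 0$. (Equivalently, estimate $\left\Vert f_{n}\right\Vert ^{2}\leq \left\Vert f_{n}\right\Vert \left\Vert f_{n}-f_{m}\right\Vert +\left\vert \left\langle f_{n},f_{m}\right\rangle \right\vert $ and let $n\rightarrow \infty $, then $m\rightarrow \infty $.) With that implication reversed, the rest of your first paragraph --- the reproducing identity on $\mathcal{H}_{k}^{0}$ and its extension by continuity --- goes through as written.
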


We will need the notion of rescaling a kernel as given in \cite[page 25]%
{AgMc2}. Given a nonvanishing complex-valued function $\rho :\Omega
\rightarrow \mathbb{C\setminus }\left\{ 0\right\} $ and a kernel function $%
k\left( y,x\right) =k_{x}\left( y\right) $, define the $\rho $-rescaled
kernel $k^{\rho }$ by%
\begin{equation*}
k^{\rho }\left( y,x\right) =\rho \left( y\right) k\left( y,x\right) 
\overline{\rho \left( x\right) },\ \ \ \ \ x,y\in \Omega .
\end{equation*}%
It is easy to see that $k^{\rho }$ is self-adjoint and positive
semidefinite, and hence is a kernel function.  We refer to the associated
Hilbert function space $\mathcal{H}_{k^{\rho }}$ as the $\rho $-rescaling of
the Hilbert function space $\mathcal{H}_{k}$. A crucial choice of rescaling
for us below is the \emph{point} rescaling with $\rho =\frac{1}{\widetilde{%
k_{a}}}$ that results in $k_{a}^{\rho }\equiv 1$. We note that the $\delta $%
\ used in \cite{AgMc2} is our $\overline{\rho }$.

\subsubsection{Multipliers}

Let $\mathcal{H}=\mathcal{H}\left( \Omega \right) $ be a Hilbert function
space on a set $\Omega $. Let $L^{\infty }=L^{\infty }\left( \Omega \right) $
denote the space of bounded functions on $\Omega $ normed by the supremum
norm%
\begin{equation*}
\left\Vert h\right\Vert _{\infty }\equiv \sup_{x\in \Omega }\left\vert
h\left( x\right) \right\vert .
\end{equation*}%
The supremum norm is relevant here as point evaluations are continuous in $%
\mathcal{H}$, and so $\left\Vert h\right\Vert _{\infty }$ is a supremum of
moduli of continuous linear functionals. We define the space%
\begin{equation*}
\mathcal{H}^{\infty }=\mathcal{H}^{\infty }\left( \Omega \right) \equiv
\left\{ h\in \mathcal{H}:\left\Vert h\right\Vert _{\infty }<\infty \right\} =%
\mathcal{H}\cap L^{\infty }\left( \Omega \right)
\end{equation*}%
to consist of the bounded functions in $\mathcal{H}$, and we norm this space
by%
\begin{equation*}
\left\Vert h\right\Vert _{\mathcal{H}^{\infty }\left( \Omega \right) }\equiv
\max \left\{ \left\Vert h\right\Vert _{\mathcal{H}},\left\Vert h\right\Vert
_{\infty }\right\} ,
\end{equation*}%
so that $\mathcal{H}^{\infty }$ is a Banach space.

A function $\varphi $ is said to be a (pointwise) multiplier of $\mathcal{H}$
if $\varphi f\in \mathcal{H}$ for all $f\in \mathcal{H}$. The collection of
all multipliers of $\mathcal{H}$ is known to be a Banach algebra which we
denote by $M_{\mathcal{H}}$. Indeed, (see e.g. \cite{AgMc2}) if $\varphi $
is a multiplier of $\mathcal{H}$, and if we denote the linear operator of
multiplication by 
\begin{equation*}
\mathcal{M}_{\varphi }f\equiv \varphi f,
\end{equation*}%
then by the closed graph theorem 
\begin{equation*}
\left\Vert \varphi \right\Vert _{M_{\mathcal{H}}}\equiv \left\Vert \mathcal{M%
}_{\varphi }\right\Vert _{\mathcal{H}\rightarrow \mathcal{H}}\equiv
\sup_{f\in \mathcal{H}:\ f\neq 0}\frac{\left\Vert \varphi f\right\Vert _{%
\mathcal{H}}}{\left\Vert f\right\Vert _{\mathcal{H}}}<\infty .
\end{equation*}%
If in addition $1\in \mathcal{H}$ we have $\varphi \in \mathcal{H}$ and 
\begin{equation*}
\left\Vert \varphi \right\Vert _{\mathcal{H}}=\left\Vert \mathcal{M}%
_{\varphi }1\right\Vert _{\mathcal{H}}\leq \left\Vert \mathcal{M}_{\varphi
}\right\Vert _{\mathcal{H}\rightarrow \mathcal{H}}\left\Vert 1\right\Vert _{%
\mathcal{H}}\ .
\end{equation*}%
Finally and most importantly, $\varphi $ is bounded in $\Omega $ by $%
\left\Vert \mathcal{M}_{\varphi }\right\Vert _{\mathcal{H}\rightarrow 
\mathcal{H}}$, i.e. 
\begin{equation}
\left\Vert \varphi \right\Vert _{\infty }\leq \left\Vert \mathcal{M}%
_{\varphi }\right\Vert _{\mathcal{H}\rightarrow \mathcal{H}}.
\label{bound mult}
\end{equation}%
Indeed, for all $x\in \Omega $ we have 
\begin{equation*}
\left\vert \varphi \left( x\right) \right\vert \left\Vert k_{x}\right\Vert _{%
\mathcal{H}}^{2}=\left\vert \varphi \left( x\right) \right\vert k_{x}\left(
x\right) =\left\vert \left\langle \varphi k_{x},k_{x}\right\rangle _{%
\mathcal{H}}\right\vert \leq \left\Vert \varphi \right\Vert _{M_{\mathcal{H}%
}}\left\Vert k_{x}\right\Vert _{\mathcal{H}}^{2}.
\end{equation*}%
Moreover we have $\mathcal{M}_{\varphi }^{\ast }k_{x}=\overline{\varphi
\left( x\right) }k_{x}$ for all $x\in \Omega $ since%
\begin{equation*}
\left\langle f,\overline{\varphi \left( x\right) }k_{x}\right\rangle _{%
\mathcal{H}}=\varphi \left( x\right) \left\langle f,k_{x}\right\rangle _{%
\mathcal{H}}=\varphi \left( x\right) f\left( x\right) =\mathcal{M}_{\varphi
}f\left( x\right) =\left\langle \mathcal{M}_{\varphi }f,k_{x}\right\rangle _{%
\mathcal{H}}=\left\langle f,\mathcal{M}_{\varphi }^{\ast }k_{x}\right\rangle
_{\mathcal{H}}
\end{equation*}%
for all $f\in \mathcal{H}$. Thus we have shown that $M_{\mathcal{H}}$ embeds
in $\mathcal{H}^{\infty }$ with%
\begin{equation*}
\left\Vert \varphi \right\Vert _{\mathcal{H}^{\infty }}\leq \max \left\{
1,\left\Vert 1\right\Vert _{\mathcal{H}}\right\} \ \left\Vert \varphi
\right\Vert _{M_{\mathcal{H}}}\ .
\end{equation*}

\subsubsection{Kernel multipliers}

There is a Banach space $K_{\mathcal{H}}$ intermediate between $M_{\mathcal{H%
}}$ and $\mathcal{H}^{\infty }\left( \Omega \right) $ that plays a major
role in this paper, namely the Banach space $K_{\mathcal{H}}$ of \emph{kernel%
} multipliers consisting of all functions $\varphi $ on $\Omega $ for which 
\begin{equation*}
\left\Vert \varphi \right\Vert _{K_{\mathcal{H}}}\equiv \max \left\{
\left\Vert \varphi \widetilde{1}\right\Vert _{\mathcal{H}},\sup_{a\in \Omega
}\left\Vert \varphi \widetilde{k_{a}}\right\Vert _{\mathcal{H}}\right\}
<\infty ,
\end{equation*}%
where $\widetilde{1}$ is the constant function $\frac{1}{\left\Vert
1\right\Vert _{\mathcal{H}}}$ normalized to have $\mathcal{H}$-norm $1$. Let 
$\varphi \in K_{\mathcal{H}}$. Clearly, from $\widetilde{k_{a}}=\frac{k_{a}}{%
\sqrt{k_{a}\left( a\right) }}$ and the reproducing property of $k_{a}$, we
have 
\begin{equation*}
\left\vert \varphi \left( a\right) \right\vert =\frac{1}{k_{a}\left(
a\right) }\left\vert \left\langle \varphi k_{a},k_{a}\right\rangle _{%
\mathcal{H}}\right\vert =\left\vert \left\langle \varphi \widetilde{k_{a}},%
\widetilde{k_{a}}\right\rangle _{\mathcal{H}}\right\vert \leq \left\Vert
\varphi \widetilde{k_{a}}\right\Vert _{\mathcal{H}}\left\Vert \widetilde{%
k_{a}}\right\Vert _{\mathcal{H}}\leq \left\Vert \varphi \right\Vert _{K_{%
\mathcal{H}}}\ ,
\end{equation*}%
and so $K_{\mathcal{H}}$ embeds in $\mathcal{H}^{\infty }$ with 
\begin{equation*}
\left\Vert \varphi \right\Vert _{\mathcal{H}^{\infty }}\leq \max \left\{
1,\left\Vert 1\right\Vert _{\mathcal{H}}\right\} \ \left\Vert \varphi
\right\Vert _{K_{\mathcal{H}}}\ .
\end{equation*}%
Moreover, $M_{\mathcal{H}}$ embeds in $K_{\mathcal{H}}$ with $\left\Vert
\varphi \right\Vert _{K_{\mathcal{H}}}\leq \left\Vert \varphi \right\Vert
_{M_{\mathcal{H}}}$ since $\left\Vert \varphi \widetilde{k_{a}}\right\Vert _{%
\mathcal{H}}\leq \left\Vert \varphi \right\Vert _{M_{\mathcal{H}}}\left\Vert 
\widetilde{k_{a}}\right\Vert _{\mathcal{H}}=\left\Vert \varphi \right\Vert
_{M_{\mathcal{H}}}$ if $\varphi \in M_{\mathcal{H}}$. Thus we have the
embeddings%
\begin{equation*}
M_{\mathcal{H}}\hookrightarrow K_{\mathcal{H}}\hookrightarrow \mathcal{H}%
^{\infty }\hookrightarrow \mathcal{H\ },
\end{equation*}%
that show that the multiplier algebra $M_{\mathcal{H}}$ is contained in the
kernel multiplier space $K_{\mathcal{H}}$ which is contained in the space $%
\mathcal{H}^{\infty }$. Finally, we note that $M_{\mathcal{H}}$ multiplies
the spaces $K_{\mathcal{H}}$ and $\mathcal{H}^{\infty }$ as well as $%
\mathcal{H}$, i.e. that $M_{\mathcal{H}}$ is contained in both $M_{K_{%
\mathcal{H}}}$ and $M_{\mathcal{H}^{\infty }}$. Indeed, if $\varphi \in M_{%
\mathcal{H}}$ and $f\in K_{\mathcal{H}}$, then 
\begin{equation*}
\left\Vert \varphi f\right\Vert _{K_{\mathcal{H}}}=\max \left\{ \left\Vert
\varphi f\widetilde{1}\right\Vert _{\mathcal{H}},\sup_{a\in \Omega
}\left\Vert \varphi f\widetilde{k_{a}}\right\Vert _{\mathcal{H}}\right\}
\leq \left\Vert \varphi \right\Vert _{M_{\mathcal{H}}}\max \left\{
\left\Vert f\widetilde{1}\right\Vert _{\mathcal{H}},\sup_{a\in \Omega
}\left\Vert f\widetilde{k_{a}}\right\Vert _{\mathcal{H}}\right\} =\left\Vert
\varphi \right\Vert _{M_{\mathcal{H}}}\left\Vert f\right\Vert _{K_{\mathcal{H%
}}}\ ,
\end{equation*}%
and%
\begin{equation*}
\left\Vert \varphi f\right\Vert _{\mathcal{H}^{\infty }}=\max \left\{
\left\Vert \varphi f\right\Vert _{\mathcal{H}},\left\Vert \varphi
f\right\Vert _{\infty }\right\} \leq \left\Vert \varphi \right\Vert _{M_{%
\mathcal{H}}}\max \left\{ \left\Vert f\right\Vert _{\mathcal{H}},\left\Vert
f\right\Vert _{\infty }\right\} =\left\Vert \varphi \right\Vert _{M_{%
\mathcal{H}}}\left\Vert f\right\Vert _{\mathcal{H}^{\infty }}\ .
\end{equation*}

Of particular importance in this paper is the case when $K_{\mathcal{H}}$ is
an algebra. This occurs for example in the case $\mathcal{H}^{\infty }=M_{%
\mathcal{H}}$, as happens when $\mathcal{H}$ is the classical Hardy or
Bergman space on a bounded domain with $C^{2}$ boundary in $\mathbb{C}^{n}$.
We also note that $K_{\mathcal{H}}$ may be an
algebra even if $\mathcal{H}^{\infty }\neq M_{\mathcal{H}}$. For example, $%
K_{\mathcal{H}}$ is an algebra when $\mathcal{H}$ is any of the
Besov-Sobolev spaces $B_{2}^{\sigma }\left( \mathbb{B}_{n}\right) $, $\sigma
> 0$ and $n\geq 1$, of analytic functions on the ball $\mathbb{B}_{n}$.
See Subsection 7.1 below for this.

We recall at this point that the Corona Property has been proved for $M_{%
\mathcal{H}}$ when $\mathcal{H}=B_{2}^{\sigma }\left( \mathbb{B}_{n}\right) $
for $0\leq \sigma \leq \frac{1}{2}$ and $n\geq 1$; the case $n=1$ is in \cite%
{Car}, \cite{Tol}, \cite{ArBlPa} and \cite{Xia2}, and the case $n>1$ is in \cite{CoSaWi}.
In addition the Corona Property has been proved by Nicolau \cite{Nic} for
the algebra $\mathcal{H}^{\infty }$ when $\mathcal{H}=B_{2}^{0}\left( 
\mathbb{D}\right) $ is the classical Dirichlet space on the disk. In
Subsection 7.2 we use the Peter Jones solution \cite{Jo}, \cite{Jo2} to the
$\overline{\partial}$-equation in the unit disk $\mathbb{D}$, together with an adaptation of
the argument of Arcozzi, Blasi and Pau \cite{ArBlPa}, to prove the Corona
Property for the algebras $K_{B_{2}^{\sigma }\left( \mathbb{D}\right) }$. No
other corona theorems for the algebras $M_{\mathcal{H}}$, $K_{\mathcal{H}}$
or $\mathcal{H}^{\infty }$ are currently known when $\mathcal{H}%
=B_{2}^{\sigma }\left( \mathbb{B}_{n}\right) $. However, we will reduce the
Corona Property for the algebra of kernel multipliers $K_{\mathcal{H}}$
associated to $\mathcal{H}=B_{2}^{\sigma }\left( \mathbb{B}_{n}\right) $, to
a simpler property we call the Convex Poisson Property for $\mathcal{H}$.
See Theorem \ref{alternate} where this is shown to hold for more general
Hilbert function spaces $\mathcal{H}$ and their associated kernel multiplier
algebra $K_{\mathcal{H}}$.

\subsubsection{Shifted spaces and multiplier stability}

\begin{definition}
If $\mathcal{H}$ is a Hilbert function space on a set $\Omega $ with \emph{%
nonvanishing} kernel function $k$, then for each $a\in \Omega $, we define
the $a$-\emph{shifted} Hilbert space $\mathcal{H}^{a}$ to be $\mathcal{H}%
_{k^{\delta }}$ where $\delta =\frac{1}{\widetilde{k_{a}}}$, the $\frac{1}{%
\widetilde{k_{a}}}$-rescaling of $\mathcal{H}$, and where $\widetilde{k_{a}}=%
\frac{1}{\sqrt{k_{a}\left( a\right) }}k_{a}$ is the normalized reproducing
kernel for $\mathcal{H}$.
\end{definition}

\begin{lemma}
\label{weighted space}Let $\mathcal{H}$ be a Hilbert function space on a set 
$\Omega $ with nonvanishing kernel function $k$. Then the space $\mathcal{H}%
^{a}$ consists of those complex-valued functions $f$ on $\Omega $ such that $%
\widetilde{k_{a}}f\in \mathcal{H}$, and the inner product in $\mathcal{H}%
^{a} $ is given by 
\begin{equation*}
\left\langle f,g\right\rangle _{\mathcal{H}^{a}}=\left\langle \widetilde{%
k_{a}}f,\widetilde{k_{a}}g\right\rangle _{\mathcal{H}},\ \ \ \ \ f,g\in 
\mathcal{H}.
\end{equation*}%
In particular $\left\Vert f\right\Vert _{\mathcal{H}^{a}}=\left\Vert 
\widetilde{k_{a}}f\right\Vert _{\mathcal{H}}$.
\end{lemma}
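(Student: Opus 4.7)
The plan is to realize the shifted space $\mathcal{H}^a=\mathcal{H}_{k^\delta}$ with $\delta=1/\widetilde{k_a}$ by exhibiting an explicit unitary map from it to $\mathcal{H}$ given by pointwise multiplication by $\widetilde{k_a}$, and then read off both the function-theoretic description and the inner product formula from this unitary.

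First I would compute the rescaled kernel. With $\delta(x)=1/\widetilde{k_a}(x)$ (well-defined and nonvanishing because $k$ is nonvanishing), the definition from the text gives
\begin{equation*}
k^{\delta}(y,x)=\delta(y)\,k(y,x)\,\overline{\delta(x)}=\frac{k(y,x)}{\widetilde{k_a}(y)\,\overline{\widetilde{k_a}(x)}},
\end{equation*}
so the reproducing kernel function of $\mathcal{H}^a$ at the point $x$ is $k^\delta_x(y)=k_x(y)/(\widetilde{k_a}(y)\overline{\widetilde{k_a}(x)})$.

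Next I would introduce the candidate unitary $T:\mathcal{H}^a\to\mathcal{H}$ defined on finite linear combinations of kernel functions by $T f=\widetilde{k_a}\cdot f$ (pointwise product). A direct computation shows $T k^\delta_x=k_x/\overline{\widetilde{k_a}(x)}\in\mathcal{H}$, and an almost identical computation verifies that $T$ preserves inner products on the spanning kernel functions:
\begin{equation*}
\bigl\langle T k^\delta_x, T k^\delta_y\bigr\rangle_{\mathcal{H}}=\frac{\langle k_x,k_y\rangle_{\mathcal{H}}}{\overline{\widetilde{k_a}(x)}\,\widetilde{k_a}(y)}=\frac{k(y,x)}{\widetilde{k_a}(y)\,\overline{\widetilde{k_a}(x)}}=k^\delta(y,x)=\bigl\langle k^\delta_x, k^\delta_y\bigr\rangle_{\mathcal{H}^a}.
\end{equation*}
Extending by linearity and density (the span of $\{k^\delta_x\}$ is dense in $\mathcal{H}^a$ by Definition \ref{defRKHS}, and the image of $T$ is the span of $\{k_x\}$, which is dense in $\mathcal{H}$), I obtain an isometric isomorphism $T:\mathcal{H}^a\to\mathcal{H}$.

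Finally I would pull the function-theoretic description back through $T$. Because both $\mathcal{H}^a$ and $\mathcal{H}$ are Hilbert function spaces on $\Omega$ with nonvanishing kernels, point evaluation commutes with $T$ in the obvious way: for a finite sum $f=\sum\xi_i k^\delta_{x_i}$, the product $\widetilde{k_a}\cdot f$ is literally $\sum\xi_i\frac{k_{x_i}}{\overline{\widetilde{k_a}(x_i)}}=Tf$, and Cauchy sequences in $\mathcal{H}^a$ converge pointwise (via the reproducing property applied to $k^\delta_x$), so passing to limits shows that every $f\in\mathcal{H}^a$ satisfies $\widetilde{k_a}f=Tf\in\mathcal{H}$. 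Conversely, any $f$ on $\Omega$ with $\widetilde{k_a}f\in\mathcal{H}$ equals $T^{-1}(\widetilde{k_a}f)\in\mathcal{H}^a$ because $T$ is surjective and pointwise multiplication by the nonvanishing $\widetilde{k_a}$ is a bijection at the function level. The identity $\|f\|_{\mathcal{H}^a}=\|\widetilde{k_a}f\|_{\mathcal{H}}$ and the inner product formula are then just restatements that $T$ is unitary.

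The only subtle point, and hence the one I would be most careful about, is the passage from the abstract completion in Definition \ref{defRKHS} to an actual space of functions on $\Omega$: I must verify that the isometric extension of $T$ from the span of kernel functions really agrees with pointwise multiplication by $\widetilde{k_a}$ for every element of $\mathcal{H}^a$, which uses the fact that the reproducing property forces norm convergence in $\mathcal{H}^a$ to imply pointwise convergence, and that $\widetilde{k_a}$ is nonvanishing so the pointwise product and division are unambiguous.
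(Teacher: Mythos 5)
Your proof is correct and follows essentially the same route as the paper, which gives two versions of the argument: your verification of the inner product identity on the spanning kernel functions $k^{\delta}_{x}$ followed by extension by density is precisely the paper's ``alternate proof,'' packaged as a unitary $T=\mathcal{M}_{\widetilde{k_a}}$, while the paper's primary proof runs the same idea in reverse by first building the concrete space $\mathcal{G}^{a}=\{h/\widetilde{k_a}:h\in\mathcal{H}\}$, checking it is a Hilbert function space with kernel $k^{\delta}$, and invoking Proposition \ref{same kernel}. Your explicit attention to why the abstract completion is realized as a space of functions (norm convergence implies pointwise convergence via the reproducing property, and $\widetilde{k_a}$ is nonvanishing) is exactly the point the paper leaves implicit, so nothing is missing.
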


\begin{proof}
Define $\mathcal{G}^{a}$ to be the linear space of functions on $\Omega $
having the form $\frac{1}{\widetilde{k_{a}}}h$ with $h\in \mathcal{H}$, and
define an inner product on $\mathcal{G}^{a}$ by $\left\langle
f,g\right\rangle _{\mathcal{G}^{a}}\equiv \left\langle \widetilde{k_{a}}f,%
\widetilde{k_{a}}g\right\rangle _{\mathcal{H}}$ for $f,g\in \mathcal{G}^{a}$%
. We have%
\begin{equation*}
k^{\delta }\left( \zeta ,\eta \right) =\frac{k\left( \zeta ,\eta \right) }{%
\widetilde{k_{a}}\left( \zeta \right) \overline{\widetilde{k_{a}}\left( \eta
\right) }}\text{ and so }k_{\eta }^{\delta }=\frac{1}{\overline{\widetilde{%
k_{a}}\left( \eta \right) }}\frac{k_{\eta }}{\widetilde{k_{a}}}\in \mathcal{G%
}^{a}.
\end{equation*}%
It is easy to see that $\mathcal{G}^{a}$ is complete in the norm derived
from this inner product, hence is a Hilbert space, and we now show that
point evaluations are continuous on $\mathcal{G}^{a}$. Indeed, if $f=\frac{1%
}{\widetilde{k_{a}}}h$ with $h\in \mathcal{H}$ then 
\begin{eqnarray*}
f\left( \eta \right) &=&\frac{1}{\widetilde{k_{a}}\left( \eta \right) }%
h\left( \eta \right) =\frac{1}{\widetilde{k_{a}}\left( \eta \right) }%
\left\langle h,k_{\eta }\right\rangle _{\mathcal{H}}=\frac{1}{\widetilde{%
k_{a}}\left( \eta \right) }\left\langle \widetilde{k_{a}}f,k_{\eta
}\right\rangle _{\mathcal{H}} \\
&=&\frac{1}{\widetilde{k_{a}}\left( \eta \right) }\left\langle \widetilde{%
k_{a}}f,\overline{\widetilde{k_{a}}\left( \eta \right) }\widetilde{k_{a}}%
k_{\eta }^{\delta }\right\rangle _{\mathcal{H}}=\left\langle \widetilde{k_{a}%
}f,\widetilde{k_{a}}k_{\eta }^{\delta }\right\rangle _{\mathcal{H}%
}=\left\langle f,k_{\eta }^{\delta }\right\rangle _{\mathcal{G}^{a}}
\end{eqnarray*}%
and so%
\begin{equation*}
\left\vert f\left( \eta \right) \right\vert \leq \left\Vert f\right\Vert _{%
\mathcal{G}^{a}}\left\Vert k_{\eta }^{\delta }\right\Vert _{\mathcal{G}%
^{a}}=\left\Vert f\right\Vert _{\mathcal{G}^{a}}\sqrt{\left\langle 
\widetilde{k_{a}}k_{\eta }^{\delta },\widetilde{k_{a}}k_{\eta }^{\delta
}\right\rangle _{\mathcal{H}}}=\left\Vert f\right\Vert _{\mathcal{G}^{a}}%
\sqrt{\left\langle \frac{k_{\eta }}{\overline{\widetilde{k_{a}}\left( \eta
\right) }},\frac{k_{\eta }}{\overline{\widetilde{k_{a}}\left( \eta \right) }}%
\right\rangle _{\mathcal{H}}}=\left\Vert f\right\Vert _{\mathcal{G}^{a}}%
\frac{\left\Vert k_{\eta }\right\Vert _{\mathcal{H}}}{\left\vert \widetilde{%
k_{a}}\left( \eta \right) \right\vert }.
\end{equation*}%
Thus $\mathcal{G}^{a}$ is a Hilbert function space on\ $\Omega $, and the
above calculation shows that the reproducing kernel for $\mathcal{G}^{a}$ is 
$k^{\delta }$. By Proposition \ref{same kernel} $\mathcal{G}^{a}=$ $\mathcal{%
H}^{a}$, and this completes the proof of the lemma.

We can give an alternate proof by computing that if $f=%
\sum_{i=1}^{J}x_{i}k_{\eta _{i}}^{\delta }\left( \zeta \right) \in \mathcal{H%
}^{a}$, then%
\begin{equation*}
\left\Vert f\right\Vert _{\mathcal{H}^{a}}^{2}=\left\langle
\sum_{i=1}^{J}x_{i}k_{\eta _{i}}^{\delta },\sum_{j=1}^{J}x_{j}k_{\eta
_{j}}^{\delta }\right\rangle _{\mathcal{H}^{a}}=\sum_{i,j=1}^{J}x_{i}%
\overline{x_{j}}\left\langle k_{\eta _{i}}^{\delta },k_{\eta _{j}}^{\delta
}\right\rangle _{\mathcal{H}^{a}}=\sum_{i,j=1}^{J}x_{i}\overline{x_{j}}%
k^{\delta }\left( \eta _{j},\eta _{i}\right) =\sum_{i,j=1}^{J}x_{i}\overline{%
x_{j}}\frac{k\left( \eta _{j},\eta _{i}\right) }{\overline{\widetilde{k_{a}}%
\left( \eta _{i}\right) }\widetilde{k_{a}}\left( \eta _{j}\right) }
\end{equation*}%
and 
\begin{eqnarray*}
\left\langle \widetilde{k_{a}}f,\widetilde{k_{a}}f\right\rangle _{\mathcal{H}%
} &=&\left\langle \sum_{i=1}^{J}x_{i}\widetilde{k_{a}}k_{\eta _{i}}^{\delta
},\sum_{j=1}^{J}x_{j}\widetilde{k_{a}}k_{\eta _{j}}^{\delta }\right\rangle _{%
\mathcal{H}}=\left\langle \sum_{i=1}^{J}x_{i}\widetilde{k_{a}}\frac{k_{\eta
_{i}}}{\overline{\widetilde{k_{a}}\left( \eta _{i}\right) }\widetilde{k_{a}}}%
,\sum_{j=1}^{J}x_{j}\widetilde{k_{a}}\frac{k_{\eta _{j}}}{\overline{%
\widetilde{k_{a}}\left( \eta _{j}\right) }\widetilde{k_{a}}}\right\rangle _{%
\mathcal{H}} \\
&=&\sum_{i,j=1}^{J}x_{i}\overline{x_{j}}\left\langle \frac{k_{\eta _{i}}}{%
\overline{\widetilde{k_{a}}\left( \eta _{i}\right) }},\frac{k_{\eta _{j}}}{%
\overline{\widetilde{k_{a}}\left( \eta _{j}\right) }}\right\rangle _{%
\mathcal{H}}=\sum_{i,j=1}^{J}x_{i}\overline{x_{j}}\frac{k_{\eta _{i}}\left(
\eta _{j}\right) }{\overline{\widetilde{k_{a}}\left( \eta _{i}\right) }%
\widetilde{k_{a}}\left( \eta _{j}\right) },
\end{eqnarray*}%
are equal. Now use that functions of the form $f=\sum_{i=1}^{J}x_{i}k_{\eta
_{i}}^{\delta }\left( \zeta \right) $ are dense in $\mathcal{H}^{a}$ by
definition.
\end{proof}

Despite the difference in norms of the shifted spaces $\mathcal{H}^{a}$, the
multiplier algebras coincide and have identical norms. This is proved in 
\cite[p.25]{AgMc2} where it is shown that rescaling a kernel leaves the
multiplier algebra and the multiplier norms unchanged. We give the simple
proof in our setting here.

\begin{lemma}
\label{stable}Let $\mathcal{H}$ be a Hilbert function space on a set $\Omega 
$ with nonvanishing kernel function. Then $M_{\mathcal{H}^{a}}=M_{\mathcal{H}%
}$ with equality of norms for all $a\in \Omega $.
\end{lemma}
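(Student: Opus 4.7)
The plan is to exploit the isometry $U_a : \mathcal{H}^a \to \mathcal{H}$ defined by $U_a f = \widetilde{k_a} f$, which is precisely the content of the preceding Lemma \ref{weighted space}. Since $\widetilde{k_a}$ is nonvanishing on $\Omega$, division by $\widetilde{k_a}$ makes sense pointwise, so the map $U_a$ is a bijection from $\mathcal{H}^a$ onto $\mathcal{H}$: surjectivity follows because any $h \in \mathcal{H}$ is the image of $f = h/\widetilde{k_a}$, which lies in $\mathcal{H}^a$ exactly because $\widetilde{k_a} f = h \in \mathcal{H}$. The identity $\|f\|_{\mathcal{H}^a} = \|\widetilde{k_a} f\|_{\mathcal{H}}$ from Lemma \ref{weighted space} then says that $U_a$ is an isometric isomorphism of Hilbert function spaces.

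Next I would observe the commutation relation $\mathcal{M}_\varphi \circ U_a = U_a \circ \mathcal{M}_\varphi$ on $\mathcal{H}^a$, which is simply the pointwise identity $\varphi(\widetilde{k_a} f) = \widetilde{k_a}(\varphi f)$. Thus multiplication by $\varphi$ on $\mathcal{H}^a$ is unitarily equivalent, via $U_a$, to multiplication by $\varphi$ on $\mathcal{H}$. From this equivalence it follows that $\varphi f \in \mathcal{H}^a$ for every $f \in \mathcal{H}^a$ if and only if $\varphi h \in \mathcal{H}$ for every $h \in \mathcal{H}$, giving $M_{\mathcal{H}^a} = M_{\mathcal{H}}$ as sets of functions.

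For the equality of norms I would compute
\begin{equation*}
\|\varphi\|_{M_{\mathcal{H}^a}} = \sup_{f \in \mathcal{H}^a,\, f\neq 0} \frac{\|\varphi f\|_{\mathcal{H}^a}}{\|f\|_{\mathcal{H}^a}} = \sup_{f \in \mathcal{H}^a,\, f\neq 0} \frac{\|\widetilde{k_a}(\varphi f)\|_{\mathcal{H}}}{\|\widetilde{k_a} f\|_{\mathcal{H}}} = \sup_{h \in \mathcal{H},\, h\neq 0} \frac{\|\varphi h\|_{\mathcal{H}}}{\|h\|_{\mathcal{H}}} = \|\varphi\|_{M_{\mathcal{H}}},
\end{equation*}
where the substitution $h = \widetilde{k_a} f = U_a f$ runs over all nonzero elements of $\mathcal{H}$ as $f$ runs over nonzero elements of $\mathcal{H}^a$.

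There is no serious obstacle here: everything hinges on the nonvanishing of $k_a$ (so that $U_a$ makes sense as a bijection) and on the isometric identity of Lemma \ref{weighted space}. The argument is essentially the general principle, recorded in \cite[p.25]{AgMc2}, that rescaling a reproducing kernel by a nonvanishing multiplicative factor leaves the multiplier algebra and its norm invariant; I would simply make this principle explicit in the present setting.
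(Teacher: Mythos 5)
Your proposal is correct and follows essentially the same route as the paper: both arguments rest on the identity $\left\Vert f\right\Vert _{\mathcal{H}^{a}}=\left\Vert \widetilde{k_{a}}f\right\Vert _{\mathcal{H}}$ from Lemma \ref{weighted space} together with the fact that $f\mapsto \widetilde{k_{a}}f$ is a bijection of $\mathcal{H}^{a}$ onto $\mathcal{H}$ commuting with $\mathcal{M}_{\varphi }$. The paper merely phrases the single supremum substitution as two separate inequalities ($\leq$ in each direction), which is a cosmetic rather than substantive difference.
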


\begin{proof}
Fix $a\in \Omega $. Suppose first that $\varphi \in M_{\mathcal{H}}$. We
claim that $\varphi \in M_{\mathcal{H}^{a}}$ with $\left\Vert \varphi
\right\Vert _{M_{\mathcal{H}^{a}}}\leq \left\Vert \varphi \right\Vert _{M_{%
\mathcal{H}}}$. Indeed, if $f\in \mathcal{H}^{a}$, then $f=\frac{1}{%
\widetilde{k_{a}}}g$ where $g\in \mathcal{H}$ with $\left\Vert g\right\Vert
_{\mathcal{H}}=\left\Vert f\right\Vert _{\mathcal{H}^{a}}$, and we have%
\begin{equation*}
\varphi f=\varphi \frac{1}{\widetilde{k_{a}}}g=\frac{1}{\widetilde{k_{a}}}%
\varphi g=\frac{1}{\widetilde{k_{a}}}G\ ,
\end{equation*}%
where $G\equiv \varphi g\in \mathcal{H}$ with $\left\Vert G\right\Vert _{%
\mathcal{H}}\leq \left\Vert \varphi \right\Vert _{M_{\mathcal{H}}}
\left\Vert g\right\Vert _{\mathcal{H}}$, and hence%
\begin{equation*}
\left\Vert \varphi f\right\Vert _{\mathcal{H}^{a}}=\left\Vert \widetilde{%
k_{a}}\varphi f\right\Vert _{\mathcal{H}}=\left\Vert G\right\Vert _{\mathcal{%
H}}\leq \left\Vert \varphi \right\Vert _{M_{\mathcal{H}}} \left\Vert
g\right\Vert _{\mathcal{H}}= \left\Vert \varphi \right\Vert _{M_{\mathcal{H}%
}} \left\Vert f\right\Vert _{\mathcal{H}^{a}}\ .
\end{equation*}%
This proves the claimed inequality: $\left\Vert \varphi \right\Vert _{M_{%
\mathcal{H}^{a}}}\leq \left\Vert \varphi \right\Vert _{M_{\mathcal{H}}}$.

Conversely, suppose that $\varphi \in M_{\mathcal{H}^{a}}$. We claim that $%
\varphi \in M_{\mathcal{H}}$ with $\left\Vert \varphi \right\Vert _{M_{%
\mathcal{H}}}\leq \left\Vert \varphi \right\Vert _{M_{\mathcal{H}^{a}}}$.
Indeed, if $g\in \mathcal{H}$, then $g=\widetilde{k_{a}}f$ where $f\in 
\mathcal{H}^{a}$ with $\left\Vert f\right\Vert _{\mathcal{H}^{a}}=\left\Vert
g\right\Vert _{\mathcal{H}}$, and we have%
\begin{equation*}
\varphi g=\varphi \widetilde{k_{a}}f=\widetilde{k_{a}}\varphi f=\widetilde{%
k_{a}}F
\end{equation*}%
where $F\equiv \varphi f \in \mathcal{H}^{a}$ with $\left\Vert F\right\Vert
_{\mathcal{H}^{a}}\leq \left\Vert \varphi \right\Vert _{M_{\mathcal{H}^{a}}}
\left\Vert f\right\Vert _{\mathcal{H}^{a}}$, and hence%
\begin{equation*}
\left\Vert \varphi g\right\Vert _{\mathcal{H}}=\left\Vert \widetilde{k_{a}}%
\varphi f\right\Vert _{\mathcal{H}}=\left\Vert F\right\Vert _{\mathcal{H}%
^{a}}\leq \left\Vert \varphi \right\Vert _{M_{\mathcal{H}^{a}}} \left\Vert
f\right\Vert _{\mathcal{H}^{a}}= \left\Vert \varphi \right\Vert _{M_{%
\mathcal{H}^{a}}}\left\Vert g\right\Vert _{\mathcal{H}}\ .
\end{equation*}
Hence we have: $\left\Vert \varphi \right\Vert _{M_{\mathcal{H}}}\leq
\left\Vert \varphi \right\Vert _{M_{\mathcal{H}^{a}}}$. These two
inequalities show that $M_{\mathcal{H}^{a}}=M_{\mathcal{H}}$ with equality
of norms.
\end{proof}

At this point we introduce the first main assumption needed for our
alternate Toeplitz corona theorem.

\begin{definition}
We say that the Hilbert function space $\mathcal{H}$ is \emph{multiplier
stable} if

\begin{enumerate}
\item the reproducing kernel functions $k_{x}$ are nonvanishing and are
invertible multipliers on $\mathcal{H}$, i.e. $k_{x}\in M_{\mathcal{H}}$ and 
$\frac{1}{k_{x}}\in M_{\mathcal{H}}$, for all $x\in \Omega $, and

\item the map $x\rightarrow k_{x}$ from $\Omega $ to $M_{\mathcal{H}}$ is
lower semicontinuous.
\end{enumerate}
\end{definition}

Note that we make no assumptions regarding the size of the norms of the
multipliers $k_{x}$ and $\frac{1}{k_{x}}$ in this definition. We will see
below that all the Besov-Sobolev spaces on the ball are multiplier stable,
as well as the Bergman and Hardy spaces on strictly pseudoconvex domains
with $C^{2}$ boundary. A crucial consequence of the multiplier stable
assumption is the $\mathcal{H}$-Poisson reproducing formula below.

\begin{lemma}
\label{H Poisson}Suppose $\mathcal{H}$ is a Hilbert function space on a set $%
\Omega $ with nonvanishing kernel and containing the constant functions.
Suppose furthermore that $k_{x}\in M_{\mathcal{H}}$ for all $x\in \Omega $.
Then for each $a\in \Omega $ we have the $\mathcal{H}$-Poisson reproducing
formula%
\begin{equation}
f\left( a\right) =\left\langle f,1\right\rangle _{\mathcal{H}^{a}},\ \ \ \ \
f\in \mathcal{H}\left( \Omega \right) ,\ a\in \Omega .  \label{H-reproducing}
\end{equation}
\end{lemma}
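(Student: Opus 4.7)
The plan is to unravel the $\mathcal{H}^{a}$-inner product using Lemma \ref{weighted space} and then use the action of the multiplier adjoint on kernel functions. Concretely, by Lemma \ref{weighted space} we have
\begin{equation*}
\langle f,1\rangle_{\mathcal{H}^{a}} = \langle \widetilde{k_{a}}f,\widetilde{k_{a}}\cdot 1\rangle_{\mathcal{H}} = \langle \widetilde{k_{a}}f,\widetilde{k_{a}}\rangle_{\mathcal{H}},
\end{equation*}
where the product $\widetilde{k_{a}}f$ lies in $\mathcal{H}$ because the hypothesis $k_{a}\in M_{\mathcal{H}}$ gives $\widetilde{k_{a}}\in M_{\mathcal{H}}$, so the pairing makes sense. (The hypothesis that $1\in\mathcal{H}$ is what legitimizes writing $\widetilde{k_{a}}\cdot 1 = \widetilde{k_{a}}$ as an element on which to test.)

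Next I would rewrite the pairing using the multiplier operator $\mathcal{M}_{\widetilde{k_{a}}}$ and its adjoint, namely
\begin{equation*}
\langle \widetilde{k_{a}}f,\widetilde{k_{a}}\rangle_{\mathcal{H}} = \langle \mathcal{M}_{\widetilde{k_{a}}}f,\widetilde{k_{a}}\rangle_{\mathcal{H}} = \langle f,\mathcal{M}_{\widetilde{k_{a}}}^{\ast}\widetilde{k_{a}}\rangle_{\mathcal{H}}.
\end{equation*}
The key identity recorded in the preliminaries is $\mathcal{M}_{\varphi}^{\ast}k_{x} = \overline{\varphi(x)}\,k_{x}$. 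Applied with $\varphi=\widetilde{k_{a}}$ and $x=a$, and noting that $\widetilde{k_{a}}(a) = k_{a}(a)/\sqrt{k_{a}(a)} = \sqrt{k_{a}(a)}$ is a positive real number, this yields
\begin{equation*}
\mathcal{M}_{\widetilde{k_{a}}}^{\ast}\widetilde{k_{a}} = \tfrac{1}{\sqrt{k_{a}(a)}}\,\mathcal{M}_{\widetilde{k_{a}}}^{\ast}k_{a} = \tfrac{1}{\sqrt{k_{a}(a)}}\,\sqrt{k_{a}(a)}\,k_{a} = k_{a}.
\end{equation*}
Plugging back in gives $\langle f,1\rangle_{\mathcal{H}^{a}} = \langle f,k_{a}\rangle_{\mathcal{H}} = f(a)$, as required.

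There is no real obstacle here; the only thing to be careful about is that every step makes sense, which is exactly what the hypotheses are designed to guarantee. The assumption $k_{a}\in M_{\mathcal{H}}$ ensures $\widetilde{k_{a}}f\in\mathcal{H}$ so that the $\mathcal{H}$-pairing on the right of Lemma \ref{weighted space} is defined; nonvanishing of $k_{a}$ is needed implicitly because Lemma \ref{weighted space} presupposes it; and $1\in\mathcal{H}$ is needed so that $1$ is a legitimate test element in $\mathcal{H}^{a}$. (Note that $1\in\mathcal{H}^{a}$ is automatic once $1\in\mathcal{H}$ and $\widetilde{k_{a}}\in M_{\mathcal{H}}$, since then $\widetilde{k_{a}}\cdot 1\in\mathcal{H}$.) Reality of $\widetilde{k_{a}}(a)$ is a convenience that removes a potential conjugate, but is not logically essential since the two $\sqrt{k_{a}(a)}$ factors cancel in any case.
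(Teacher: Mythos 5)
Your proof is correct and uses the same two ingredients as the paper's: Lemma \ref{weighted space} to rewrite $\langle f,1\rangle_{\mathcal{H}^{a}}$ as $\langle \widetilde{k_{a}}f,\widetilde{k_{a}}\rangle_{\mathcal{H}}$, followed by the reproducing property of $k_{a}$. The only cosmetic difference is that the paper evaluates $F_{a}=\widetilde{k_{a}}f$ at $a$ directly via $F_{a}(a)=\langle F_{a},k_{a}\rangle_{\mathcal{H}}$, whereas you route the same fact through the adjoint identity $\mathcal{M}_{\widetilde{k_{a}}}^{\ast}k_{a}=\overline{\widetilde{k_{a}}(a)}\,k_{a}$; these are equivalent.
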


\begin{proof}
Since $k_{a}\in M_{\mathcal{H}}$ by hypothesis, the function $F_{a}\left(
w\right) \equiv \widetilde{k_{a}}\left( w\right) f\left( w\right) $ is in $%
\mathcal{H}$ for $f\in \mathcal{H}$, and so using Lemma \ref{weighted space}%
, 
\begin{equation*}
\sqrt{k_{a}\left( a\right) }f\left( a\right) =\widetilde{k_{a}}\left(
a\right) f\left( a\right) =F_{a}\left( a\right) =\left\langle
F_{a},k_{a}\right\rangle _{\mathcal{H}}=\sqrt{k_{a}\left( a\right) }%
\left\langle \widetilde{k_{a}}f,\widetilde{k_{a}}\right\rangle _{\mathcal{H}%
}=\sqrt{k_{a}\left( a\right) }\left\langle f,1\right\rangle _{\mathcal{H}%
^{a}},
\end{equation*}%
which gives (\ref{H-reproducing}).
\end{proof}

The spaces $\mathcal{H}^{a}$ are in fact all equal to $\mathcal{H}$ as sets,
with different but comparable norms (the constants of comparability need not
be bounded in $a$).

\begin{lemma}
\label{comp}Suppose $\mathcal{H}$ is multiplier stable. For $a\in \Omega $
we have comparability of the norms for $\mathcal{H}$ and $\mathcal{H}^{a}$:%
\begin{equation*}
\frac{1}{\left\Vert \frac{1}{\widetilde{k_{a}}}\right\Vert _{M_{\mathcal{H}}}%
}\left\Vert h\right\Vert _{\mathcal{H}}\leq \left\Vert h\right\Vert _{%
\mathcal{H}^{a}}\leq \left\Vert \widetilde{k_{a}}\right\Vert _{M_{\mathcal{H}%
}}\left\Vert h\right\Vert _{\mathcal{H}}\ ,\ \ \ \ \ h\in \mathcal{H}\cup 
\mathcal{H}^{a}.
\end{equation*}
\end{lemma}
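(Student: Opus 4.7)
The plan is to reduce the lemma directly to the identity $\|h\|_{\mathcal{H}^{a}} = \|\widetilde{k_{a}}\, h\|_{\mathcal{H}}$ from Lemma \ref{weighted space} and then apply the multiplier norms of $\widetilde{k_{a}}$ and its reciprocal, both of which are finite by the multiplier stable hypothesis (since $\widetilde{k_{a}}$ differs from $k_{a}$ only by the scalar factor $1/\sqrt{k_{a}(a)}$, it and its reciprocal belong to $M_{\mathcal{H}}$).

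First I would verify that the two spaces coincide as sets. By Lemma \ref{weighted space}, $\mathcal{H}^{a} = \{h : \widetilde{k_{a}} h \in \mathcal{H}\}$. If $h\in\mathcal{H}$, then $\widetilde{k_{a}}\in M_{\mathcal{H}}$ gives $\widetilde{k_{a}} h\in\mathcal{H}$, so $h\in\mathcal{H}^{a}$. Conversely, if $h\in\mathcal{H}^{a}$, then writing $h = \frac{1}{\widetilde{k_{a}}}(\widetilde{k_{a}} h)$ and using $\tfrac{1}{\widetilde{k_{a}}}\in M_{\mathcal{H}}$ with $\widetilde{k_{a}} h\in\mathcal{H}$, we conclude $h\in\mathcal{H}$. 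Thus $\mathcal{H}\cup\mathcal{H}^{a} = \mathcal{H} = \mathcal{H}^{a}$ as vector spaces, so the stated inequality makes sense for any $h$ in either space.

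Next I would derive both inequalities. For the upper bound, Lemma \ref{weighted space} and the definition of the multiplier norm give
\begin{equation*}
\|h\|_{\mathcal{H}^{a}} \;=\; \|\widetilde{k_{a}}\, h\|_{\mathcal{H}} \;\leq\; \|\widetilde{k_{a}}\|_{M_{\mathcal{H}}}\,\|h\|_{\mathcal{H}}.
\end{equation*}
For the lower bound I would write $h = \tfrac{1}{\widetilde{k_{a}}}\,(\widetilde{k_{a}} h)$ and estimate, again using Lemma \ref{weighted space},
\begin{equation*}
\|h\|_{\mathcal{H}} \;=\; \Bigl\| \tfrac{1}{\widetilde{k_{a}}}\,(\widetilde{k_{a}}\, h)\Bigr\|_{\mathcal{H}} \;\leq\; \Bigl\|\tfrac{1}{\widetilde{k_{a}}}\Bigr\|_{M_{\mathcal{H}}}\,\|\widetilde{k_{a}}\, h\|_{\mathcal{H}} \;=\; \Bigl\|\tfrac{1}{\widetilde{k_{a}}}\Bigr\|_{M_{\mathcal{H}}}\,\|h\|_{\mathcal{H}^{a}},
\end{equation*}
which rearranges to the claimed lower bound.

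No step here looks genuinely difficult: the proof is essentially a two-line chain of multiplier-norm estimates packaged by the identity in Lemma \ref{weighted space}. The only conceptual subtlety is the preliminary set-theoretic remark that $\mathcal{H} = \mathcal{H}^{a}$ holds as sets under the multiplier stable assumption, which is precisely what makes the symbol $\mathcal{H}\cup\mathcal{H}^{a}$ in the statement unambiguous.
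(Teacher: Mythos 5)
Your proof is correct and is essentially identical to the paper's: both reduce to the identity $\left\Vert h\right\Vert _{\mathcal{H}^{a}}=\left\Vert \widetilde{k_{a}}h\right\Vert _{\mathcal{H}}$ from Lemma \ref{weighted space} and then apply the multiplier norms of $\widetilde{k_{a}}$ and $\frac{1}{\widetilde{k_{a}}}$ in the two directions. The preliminary observation that $\mathcal{H}=\mathcal{H}^{a}$ as sets is a harmless (and clarifying) addition that the paper leaves implicit.
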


\begin{proof}
Since $k_{a},\frac{1}{k_{a}}\in M_{\mathcal{H}}$ we see that 
\begin{eqnarray*}
\left\Vert f\right\Vert _{\mathcal{H}^{a}} &=&\left\Vert \widetilde{k_{a}}%
f\right\Vert _{\mathcal{H}}\leq \left\Vert \widetilde{k_{a}}\right\Vert _{M_{%
\mathcal{H}}}\left\Vert f\right\Vert _{\mathcal{H}}\ , \\
\left\Vert g\right\Vert _{\mathcal{H}} &=&\left\Vert \frac{1}{\widetilde{%
k_{a}}}\widetilde{k_{a}}g\right\Vert _{\mathcal{H}}\leq \left\Vert \frac{1}{%
\widetilde{k_{a}}}\right\Vert _{M_{\mathcal{H}}}\left\Vert \widetilde{k_{a}}%
g\right\Vert _{\mathcal{H}}=\left\Vert \frac{1}{\widetilde{k_{a}}}%
\right\Vert _{M_{\mathcal{H}}}\left\Vert g\right\Vert _{\mathcal{H}^{a}}\ ,
\end{eqnarray*}%
and these two inequalities prove the lemma.
\end{proof}

\subsection{Interpolation by rescalings}

\begin{definition}
Given a multiplier stable Hilbert function space $\mathcal{H}$ on\ $\Omega $
with kernel $k$, and $\left( \mathbf{a,\theta }\right) \in \Omega ^{M}\times %
\left[ 0,1\right] ^{M+1}$, define the Hilbert function space $\mathcal{H}^{%
\mathbf{a,\theta }}$ to be $\mathcal{H}$ with inner product given by%
\begin{equation*}
\left\langle f,g\right\rangle _{\mathcal{H}^{\mathbf{a,\theta }}}\equiv
\theta _{0}\left\langle f,g\right\rangle _{\mathcal{H}}+\sum_{m=1}^{M}\theta
_{m}\left\langle f,g\right\rangle _{\mathcal{H}^{a_{m}}},\ \ \ \ \ f,g\in 
\mathcal{H}.
\end{equation*}
\end{definition}

We recall from Lemma \ref{comp} that all of the spaces $\mathcal{H}^{a_{m}}$
are comparable, hence the inner product $\left\langle f,g\right\rangle _{%
\mathcal{H}^{\mathbf{a,\theta }}}$ is defined for $f,g\in \mathcal{H}$, and
all of the spaces $\mathcal{H}^{\mathbf{a,\theta }}$ are comparable with $%
\mathcal{H}$. We will often use the convention $k_{\eta }^{a_{0}}=k_{\eta }$
in order to simplify the sum above to $\left\langle f,g\right\rangle _{%
\mathcal{H}^{\mathbf{a,\theta }}}=\sum_{m=0}^{M}\theta _{m}\left\langle
f,g\right\rangle _{\mathcal{H}^{a_{m}}}$. We will refer to the spaces $%
\mathcal{H}^{\mathbf{a,\theta }}$ as the \emph{convex shifted} spaces
associated with $\mathcal{H}$. They are normed by $\left\Vert f\right\Vert _{%
\mathcal{H}^{\mathbf{a,\theta }}}=\sqrt{\left\langle f,f\right\rangle _{%
\mathcal{H}^{\mathbf{a,\theta }}}}$.

Let 
\begin{equation*}
\Sigma _{M}\equiv \left\{ \mathbf{\theta }=\left\{ \theta _{m}\right\}
_{m=0}^{M}\subset \left[ 0,1\right] ^{M+1}:\sum\limits_{j=0}^{M}\theta
_{j}=1\right\}
\end{equation*}%
denote the unit $\left( M+1\right) $-dimensional simplex.

\begin{description}
\item[Assumption] We now make the standing assumption, in force for the
remainder of the paper, that $\mathcal{H}$ contains the constant functions
on $\Omega $, and upon multiplying by a positive constant, we may assume
that 
\begin{equation*}
\left\Vert 1\right\Vert _{\mathcal{H}}=1.
\end{equation*}
\end{description}

\begin{corollary}
\label{constant 1} The norm of the constant function $1$ in the space $%
\mathcal{H}^{\mathbf{a,\theta }}$ is $1$ for all $\mathbf{a}\in \Omega ^{M}$
and $\mathbf{\theta }\in \Sigma _{M}$.
\end{corollary}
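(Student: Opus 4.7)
The plan is to unwind the definition of the convex shifted norm and check that each summand contributes exactly $\theta_m$. Concretely, I will compute
\begin{equation*}
\left\Vert 1\right\Vert_{\mathcal{H}^{\mathbf{a,\theta}}}^{2} = \sum_{m=0}^{M}\theta_{m}\left\Vert 1\right\Vert_{\mathcal{H}^{a_{m}}}^{2}
\end{equation*}
and show that every factor $\|1\|_{\mathcal{H}^{a_{m}}}^{2}$ equals $1$.

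For the $m=0$ term this is immediate from the standing assumption $\|1\|_{\mathcal{H}}=1$. For $m\geq 1$, I would apply Lemma \ref{weighted space} with $f=g=1$ to get $\|1\|_{\mathcal{H}^{a_{m}}}=\|\widetilde{k_{a_{m}}}\cdot 1\|_{\mathcal{H}}=\|\widetilde{k_{a_{m}}}\|_{\mathcal{H}}$, and then observe that by the very definition $\widetilde{k_{a_{m}}}=k_{a_{m}}/\sqrt{k_{a_{m}}(a_{m})}$ together with $\|k_{a_{m}}\|_{\mathcal{H}}^{2}=k_{a_{m}}(a_{m})$ we have $\|\widetilde{k_{a_{m}}}\|_{\mathcal{H}}=1$. (Alternatively, one can read this directly off the $\mathcal{H}$-Poisson reproducing formula of Lemma \ref{H Poisson} applied to $f\equiv 1$: $1=1(a_{m})=\langle 1,1\rangle_{\mathcal{H}^{a_{m}}}$.)

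Combining these identities with the simplex condition $\sum_{m=0}^{M}\theta_{m}=1$ yields
\begin{equation*}
\left\Vert 1\right\Vert_{\mathcal{H}^{\mathbf{a,\theta}}}^{2}=\sum_{m=0}^{M}\theta_{m}\cdot 1 = 1,
\end{equation*}
which is the claim. There is no genuine obstacle here; the only thing to be careful about is the convention $\mathcal{H}^{a_{0}}=\mathcal{H}$ used to include the $m=0$ term in the sum, and the invocation of the standing normalization $\|1\|_{\mathcal{H}}=1$ so that the $m=0$ contribution matches the normalization of the kernel rescalings for $m\geq 1$.
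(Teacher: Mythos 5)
Your proposal is correct and follows essentially the same route as the paper: expand $\left\Vert 1\right\Vert _{\mathcal{H}^{\mathbf{a,\theta }}}^{2}$ as the convex combination $\sum_{m=0}^{M}\theta _{m}\left\langle 1,1\right\rangle _{\mathcal{H}^{a_{m}}}$, identify each term with $\left\Vert \widetilde{k_{a_{m}}}\right\Vert _{\mathcal{H}}^{2}=1$ (the $m=0$ term via the normalization $\left\Vert 1\right\Vert _{\mathcal{H}}=1$), and sum over the simplex. Nothing further is needed.
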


\begin{proof}
We have%
\begin{equation*}
\left\Vert 1\right\Vert _{\mathcal{H}^{\mathbf{a,\theta }}}^{2}=\left\langle
1,1\right\rangle _{\mathcal{H}^{\mathbf{a,\theta }}}=\sum_{m=0}^{M}\theta
_{m}\left\langle 1,1\right\rangle _{\mathcal{H}^{a_{m}}}=\sum_{m=0}^{M}%
\theta _{m}\left\langle \widetilde{k_{a_{m}}},\widetilde{k_{a_{m}}}%
\right\rangle _{\mathcal{H}}=\sum_{m=0}^{M}\theta _{m}=1.
\end{equation*}
\end{proof}

\subsubsection{Vector-valued norms}

Let $N\geq 1$ be fixed. Given Hilbert spaces $\mathcal{H}_{\ell }$ for $%
1\leq \ell \leq N$, define a complete inner product on the direct sum $%
\oplus _{\ell =1}^{N}\mathcal{H}_{\ell }$ by%
\begin{equation*}
\left\langle f,g\right\rangle _{\oplus _{\ell =1}^{N}\mathcal{H}_{\ell
}}\equiv \sum_{\ell =1}^{N}\left\langle f_{\ell },g_{\ell }\right\rangle _{%
\mathcal{H}_{\ell }}\ ,\ \ \ \ \ f=\left( f_{\ell }\right) _{\ell
=1}^{N},g=\left( g_{\ell }\right) _{\ell =1}^{N}\in \oplus _{\ell =1}^{N}%
\mathcal{H}_{\ell }\ .
\end{equation*}%
When all the spaces $\mathcal{H}_{\ell }$ are equal to the same space $%
\mathcal{H}$, we write simply $\oplus ^{N}\mathcal{H}$ in place of $\oplus
_{\ell =1}^{N}\mathcal{H}_{\ell }$.

Given Banach spaces $B_{\ell }$ for $1\leq \ell \leq N$, define a complete
norm on the direct sum $\oplus _{\ell =1}^{N}B_{\ell }$ by%
\begin{equation*}
\left\Vert \varphi \right\Vert _{\oplus _{\ell =1}^{N}B_{\ell }}\equiv
\max_{1\leq \ell \leq N}\left\Vert \varphi _{\ell }\right\Vert _{B_{\ell }}\
,\ \ \ \ \ \varphi =\left( \varphi _{\ell }\right) _{\ell =1}^{N}\in \oplus
_{\ell =1}^{N}B_{\ell }\ ,
\end{equation*}%
and again, when all the spaces $B_{\ell }$ are equal to the same space $B$,
we write simply $\oplus ^{N}B$ in place of $\oplus _{\ell =1}^{N}B_{\ell }$.
In the case when the $B_{\ell }$ are also Hilbert spaces, this definition
differs from the previous one, but the intended definition should always be
clear from the context. We will be mainly concerned with the cases $B=M_{%
\mathcal{H}}$ and $B=K_{\mathcal{H}}$.

When $B=M_{\mathcal{H}}$ is the multiplier algebra of a Hilbert function
space $\mathcal{H}$ on a set $\Omega $, there are two additional natural
norms to consider, namely the row and column norms, to which we now turn.
Let $\mathcal{H}$ be a Hilbert function space on a set $\Omega $. For $%
\varphi \in \oplus ^{N}M_{\mathcal{H}}$ define 
\begin{eqnarray*}
\mathcal{M}_{\varphi } &:&\oplus ^{N}\mathcal{H}\rightarrow \mathcal{H}\text{
by }\mathcal{M}_{\varphi }f=\sum_{\alpha =1}^{N}\varphi _{\alpha }f_{\alpha
}, \\
\mathbb{M}_{\varphi } &:&\mathcal{H}\rightarrow \oplus ^{N}\mathcal{H}\text{
by }\mathbb{M}_{\varphi }h=\left( \varphi _{\alpha }h\right) _{\alpha
=1}^{N}.
\end{eqnarray*}%
Then we have $\mathcal{M}_{\varphi }^{\ast }h=\left( \mathcal{M}_{\varphi
_{\alpha }}^{\ast }f\right) _{\alpha =1}^{N}$, and in particular,%
\begin{equation}
\mathcal{M}_{\varphi }^{\ast }\widetilde{k_{z}}=\left( \mathcal{M}_{\varphi
_{\alpha }}^{\ast }\widetilde{k_{z}}\right) _{\alpha =1}^{N}=\left( 
\overline{\varphi _{\ell }\left( z\right) }\widetilde{k_{z}}\right) _{\alpha
=1}^{N}\ .  \label{in part}
\end{equation}%
We also define the row and column norms by%
\begin{equation*}
\left\Vert \mathcal{M}_{\varphi }\right\Vert _{\limfunc{op}}\equiv
\sup_{f\neq 0}\frac{\left\Vert \mathcal{M}_{\varphi }f\right\Vert _{\mathcal{%
H}}}{\left\Vert f\right\Vert _{\oplus ^{N}\mathcal{H}}}\text{ and }%
\left\Vert \mathbb{M}_{\varphi }\right\Vert _{\limfunc{op}}\equiv
\sup_{h\neq 0}\frac{\left\Vert \mathbb{M}_{\varphi }h\right\Vert _{\oplus
^{N}\mathcal{H}}}{\left\Vert h\right\Vert _{\mathcal{H}}}.
\end{equation*}%
The two key inequalities involving these norms are:%
\begin{eqnarray*}
\left\Vert \varphi \right\Vert _{L^{\infty }\left( \ell _{N}^{2}\right) }
&\equiv &\sup_{z\in \Omega }\left( \sum_{\ell =1}^{N}\left\vert \varphi
_{\ell }\left( z\right) \right\vert ^{2}\right) ^{\frac{1}{2}}\leq
\left\Vert \mathcal{M}_{\varphi }\right\Vert _{\limfunc{op}}\ , \\
\left\Vert \varphi \right\Vert _{\oplus ^{N}\mathcal{H}} &\leq &\left\Vert 
\mathbb{M}_{\varphi }\right\Vert _{\limfunc{op}}\left\Vert 1\right\Vert _{%
\mathcal{H}}\ .
\end{eqnarray*}%
The first inequality follows from (\ref{in part}), 
\begin{eqnarray*}
\sum_{\ell =1}^{N}\left\vert \varphi _{\ell }\left( z\right) \right\vert
^{2} &=&\sum_{\ell =1}^{N}\left\vert \varphi _{\ell }\left( z\right)
\right\vert ^{2}\left\Vert \widetilde{k_{z}}\right\Vert _{\mathcal{H}%
}^{2}=\sum_{\ell =1}^{N}\left\Vert \overline{\varphi _{\ell }\left( z\right) 
}\widetilde{k_{z}}\right\Vert _{\mathcal{H}}^{2}=\left\Vert \mathcal{M}%
_{\varphi }^{\ast }\widetilde{k_{z}}\right\Vert _{\oplus ^{N}\mathcal{H}}^{2}
\\
&\leq &\left\Vert \mathcal{M}_{\varphi }^{\ast }\right\Vert _{\limfunc{op}%
}^{2}\left\Vert \widetilde{k_{z}}\right\Vert _{\mathcal{H}}^{2}=\left\Vert 
\mathcal{M}_{\varphi }\right\Vert _{\limfunc{op}}^{2}\left\Vert \widetilde{%
k_{z}}\right\Vert _{\mathcal{H}}^{2}=\left\Vert \mathcal{M}_{\varphi
}\right\Vert _{\limfunc{op}}^{2},
\end{eqnarray*}%
and the second inequality follows from%
\begin{eqnarray*}
\left\Vert \varphi \right\Vert _{\oplus ^{N}\mathcal{H}}^{2} &=&\sum_{\ell
=1}^{N}\left\Vert \varphi _{\ell }\right\Vert _{\mathcal{H}}^{2}=\sum_{\ell
=1}^{N}\left\Vert \varphi _{\ell }1\right\Vert _{\mathcal{H}}^{2}=\sum_{\ell
=1}^{N}\left\Vert \mathcal{M}_{\varphi _{\ell }}1\right\Vert _{\mathcal{H}%
}^{2}=\left\Vert \mathbb{M}_{\varphi }1\right\Vert _{\oplus ^{N}\mathcal{H}%
}^{2} \\
&\leq &\left\Vert \mathbb{M}_{\varphi }\right\Vert _{\limfunc{op}%
}^{2}\left\Vert 1\right\Vert _{\mathcal{H}}^{2}\ .
\end{eqnarray*}%
Then from our assumption that the norm of $1$ in the space $\mathcal{H}$ is $%
1$, we have both of the inequalities $\left\Vert \varphi \right\Vert
_{L^{\infty }\left( \ell ^{2}\right) }\leq \left\Vert \mathcal{M}_{\varphi
}\right\Vert _{\limfunc{op}}$ and $\left\Vert \varphi \right\Vert _{\oplus
^{N}\mathcal{H}}\leq \left\Vert \mathbb{M}_{\varphi }\right\Vert _{\limfunc{%
op}}$. We now have three norms on the Banach space $\oplus ^{N}M_{\mathcal{H}%
}$.

\begin{definition}
Given $\varphi \in \oplus ^{N}M_{\mathcal{H}}$, define the three norms 
\begin{equation*}
\left\Vert \varphi \right\Vert _{\oplus ^{N}M_{\mathcal{H}}}^{\limfunc{row}%
}\equiv \left\Vert \mathcal{M}_{\varphi }\right\Vert _{\limfunc{op}}\ ,\ \ \
\left\Vert \varphi \right\Vert _{\oplus ^{N}M_{\mathcal{H}}}^{\limfunc{column%
}}\equiv \left\Vert \mathbb{M}_{\varphi }\right\Vert _{\limfunc{op}}\ ,\ \ \
\left\Vert \varphi \right\Vert _{\oplus ^{N}M_{\mathcal{H}}}^{\max }\equiv
\max_{1\leq \ell \leq N}\left\Vert \varphi _{\ell }\right\Vert _{M_{\mathcal{%
H}}}\ .
\end{equation*}
\end{definition}

These norms are comparable since 
\begin{eqnarray*}
\left\Vert \varphi \right\Vert _{\oplus ^{N}M_{\mathcal{H}}}^{\max } &\leq
&\min \left\{ \left\Vert \varphi \right\Vert _{\oplus ^{N}M_{\mathcal{H}}}^{%
\limfunc{row}},\left\Vert \varphi \right\Vert _{\oplus ^{N}M_{\mathcal{H}}}^{%
\limfunc{column}}\right\} \\
&\leq &\max \left\{ \left\Vert \varphi \right\Vert _{\oplus ^{N}M_{\mathcal{H%
}}}^{\limfunc{row}},\left\Vert \varphi \right\Vert _{\oplus ^{N}M_{\mathcal{H%
}}}^{\limfunc{column}}\right\} \leq \sqrt{N}\left\Vert \varphi \right\Vert
_{\oplus ^{N}M_{\mathcal{H}}}^{\max }\ .
\end{eqnarray*}%
When $B=K_{\mathcal{H}}$ is the Banach space of kernel multipliers on $%
\mathcal{H}$, and in the presence of the standing assumption $\left\Vert
1\right\Vert _{\mathcal{H}}=1$, we will use the following natural norm on
the direct sum $\oplus _{\ell =1}^{N}K_{\mathcal{H}}$.

\begin{definition}
\label{def H inf}Let $\mathcal{H}$ be a Hilbert function space on a set $%
\Omega $. For $N\geq 1$ and $\varphi \in \oplus _{\ell =1}^{N}K_{\mathcal{H}%
} $ define the following norm on $\oplus ^{N}K_{\mathcal{H}}$ :%
\begin{equation*}
\left\Vert \varphi \right\Vert _{\oplus ^{N}K_{\mathcal{H}}}\equiv \max
\left\{ \left\Vert \varphi \right\Vert _{\oplus ^{N}\mathcal{H}},\sup_{a\in
\Omega }\left\Vert \varphi \widetilde{k_{a}}\right\Vert _{\oplus ^{N}%
\mathcal{H}}\right\} .
\end{equation*}
\end{definition}

In the event that $K_{\mathcal{H}}=M_{\mathcal{H}}$ isometrically, then the
norm just introduced on $\oplus ^{N}K_{\mathcal{H}}$ is comparable to the
three introduced on $\oplus ^{N}M_{\mathcal{H}}$ above:%
\begin{equation}
\left\Vert \varphi \right\Vert _{\oplus ^{N}K_{\mathcal{H}}}\leq \left\Vert
\varphi \right\Vert _{\oplus ^{N}M_{\mathcal{H}}}^{\limfunc{column}}\leq 
\sqrt{N}\left\Vert \varphi \right\Vert _{\oplus ^{N}M_{\mathcal{H}}}^{\max
}\leq \sqrt{N}\left\Vert \varphi \right\Vert _{\oplus ^{N}K_{\mathcal{H}}}\ .
\label{K and M}
\end{equation}

\subsection{A characterization of rescaling}

We end this section on preliminaries with a characterization of when two
kernels are rescalings of each other. For this we begin with a quick review
of relevant properties of positive matrices. Recall that an $n\times n$
self-adjoint matrix $A$ of complex numbers is said to be \emph{positive},
denoted $A\succcurlyeq 0$, if all of its eigenvalues are nonnegative; and
said to be \emph{strictly positive}, denoted $A\succ 0$, if all of its
eigenvalues are positive. Clearly sums of positive matrices are positive.
Moreover, given any self-adjoint $A$, there is a unitary matrix $U$ such
that $UAU^{\ast }=\limfunc{Diag}\left( \lambda _{1},\ldots ,\lambda
_{n}\right) =\left[ 
\begin{array}{cccc}
\lambda _{1} & 0 & \cdots & 0 \\ 
0 & \lambda _{2} & \ddots & \vdots \\ 
\vdots & \ddots & \ddots & 0 \\ 
0 & \cdots & 0 & \lambda _{n}%
\end{array}%
\right] $, with $\lambda _{j}\in \mathbb{R}$. A \emph{dyad} is a rank one
matrix of the form $v\otimes v^{\ast }=\left[ 
\begin{array}{cccc}
\left\vert v_{1}\right\vert ^{2} & v_{1}\overline{v_{2}} & \cdots & v_{1}%
\overline{v_{n}} \\ 
v_{2}\overline{v_{1}} & \left\vert v_{2}\right\vert ^{2} & \ddots & \vdots
\\ 
\vdots & \ddots & \ddots & v_{n-1}\overline{v_{n}} \\ 
v_{n}\overline{v_{1}} & \cdots & v_{n}\overline{v_{n-1}} & \left\vert
v_{n}\right\vert ^{2}%
\end{array}%
\right] $. Every dyad is positive, and conversely, every positive matrix $A$
is a sum $\sum_{i=1}^{I}\alpha _{i}v_{i}\otimes v_{i}^{\ast }$ of dyads $%
v_{i}\otimes v_{i}^{\ast }$ with nonnegative coefficients $\alpha _{i}$.
However, such decompositions are not in general unique. For example, any
positive sum of dyads is a positive matrix $A$, and the spectral theorem for 
$A$ gives in general a different decomposition into dyads with pairwise
orthogonal vectors (the eigenvectors of the matrix $A$). One important
consequence is that the Schur product of two positive matrices is positive.
Indeed, if $A=\sum_{i=1}^{I}\alpha _{i}v_{i}\otimes v_{i}^{\ast }$ and $%
B=\sum_{j=1}^{J}\beta _{j}w_{j}\otimes w_{j}^{\ast }$, then $A\circ
B=\sum_{i=1}^{I}\sum_{j=1}^{J}\alpha _{i}\beta _{j}\left( v_{i}\otimes
v_{i}^{\ast }\right) \circ \left( w_{j}\otimes w_{j}^{\ast }\right) $, and
it is easily verified that $\left( v_{i}\otimes v_{i}^{\ast }\right) \circ
\left( w_{j}\otimes w_{j}^{\ast }\right) =\left( v_{i}\circ w_{i}\right)
\otimes \left( v_{j}\circ w_{j}\right) ^{\ast }$.

\bigskip

Now we turn to the problem of deciding when two self-adjoint nonvanishing
functions\thinspace $K\left( x,y\right) $ and $k\left( x,y\right) $ on a
product set $\Omega \times \Omega $ are rescalings of each other. The
surprisingly simple answer depends only on the $2\times 2$ and $3\times 3$
principal submatrices of the infinite matrices $\left[ K\left( x,y\right) %
\right] _{\left( x,y\right) \in \Omega \times \Omega }$ and $\left[ k\left(
x,y\right) \right] _{\left( x,y\right) \in \Omega \times \Omega }$.

\begin{proposition}
\label{self-adjoint rescaling}Suppose that $K$ and $k$ are two self-adjoint
nonvanishing functions on a product set $\Omega \times \Omega $. Then $K$
and $k$ are rescalings of each other, i.e. there is a nonvanishing function $%
\psi $ on $\Omega $ such that $K=\left( \psi \otimes \psi ^{\ast }\right)
\circ k$ if and only if the following two conditions hold:

\begin{enumerate}
\item $\frac{\left\vert K\left( x,y\right) \right\vert ^{2}}{K\left(
x,x\right) K\left( y,y\right) }=\frac{\left\vert k\left( x,y\right)
\right\vert ^{2}}{k\left( x,x\right) k\left( y,y\right) }$ for all $x,y\in
\Omega $,

\item $\arg \frac{K\left( x,z\right) }{k\left( x,z\right) }=\arg \frac{%
K\left( x,y\right) }{k\left( x,y\right) }+\arg \frac{K\left( y,z\right) }{%
k\left( y,z\right) }$ $\left( \func{mod}2\pi \right) $ for all $x,y,z\in
\Omega $.
\end{enumerate}
\end{proposition}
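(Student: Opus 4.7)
The plan is to prove necessity by direct computation and sufficiency by constructing an explicit $\psi$ from a single row of the ratio $K/k$. The overall architecture is: condition (1) controls moduli, condition (2) controls arguments, and the two together give the rank-one tensor decomposition $\psi\otimes\overline{\psi}$.

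For the necessity direction, I would write $K(x,y)=\psi(x)\overline{\psi(y)}\,k(x,y)$ and set $y=x$ to obtain $|\psi(x)|^{2}=K(x,x)/k(x,x)$. Taking absolute values in the factorization then yields condition (1) immediately. For condition (2), observe that $\arg[K(x,y)/k(x,y)]\equiv \arg\psi(x)-\arg\psi(y)\pmod{2\pi}$, so the right-hand side of the cocycle identity telescopes to the left-hand side. This direction is essentially bookkeeping.

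For sufficiency, I would fix a basepoint $x_{0}\in\Omega$. Since $K$ and $k$ are self-adjoint and nonvanishing, their diagonal values are nonzero real, and applying condition (1) with $y=x_0$ together with $x\mapsto x$ forces $K(x,x)/k(x,x)$ to have constant sign in $x$; after possibly multiplying $K$ by $-1$ on the diagonal input (which the hypotheses permit only in the positive case, so this is automatic) one gets a positive ratio. Thus
\[
c_{0}\equiv\sqrt{K(x_{0},x_{0})/k(x_{0},x_{0})}
\]
is a positive real, and I would define
\[
\psi(x)\equiv\frac{K(x,x_{0})}{c_{0}\,k(x,x_{0})},\qquad x\in\Omega,
\]
which is nonvanishing by hypothesis. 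It remains to verify the identity $K(x,y)=\psi(x)\overline{\psi(y)}\,k(x,y)$, which after using self-adjointness $\overline{k(y,x_{0})}=k(x_{0},y)$ becomes
\[
\frac{K(x,y)}{k(x,y)}=\frac{1}{c_{0}^{2}}\cdot\frac{K(x,x_{0})}{k(x,x_{0})}\cdot\frac{K(x_{0},y)}{k(x_{0},y)}.
\]

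I would check this last identity by matching moduli and arguments separately. Condition (1) applied to each of the three ratios on the right, together with $c_{0}^{2}=K(x_{0},x_{0})/k(x_{0},x_{0})$, collapses the moduli to $|K(x,y)/k(x,y)|$; condition (2) with the triple $(x,x_{0},y)$ is exactly the statement that the arguments on both sides agree modulo $2\pi$. The main (indeed only) obstacle is this final verification, and it decomposes cleanly into the pair of bookkeeping computations just described, showing that the two hypotheses play complementary and exhaustive roles. The conceptual takeaway is that the $2\times 2$ and $3\times 3$ principal data encode, respectively, the modulus and the phase of the rank-one factor $\psi\otimes\overline{\psi}$, so no higher-order data is required.
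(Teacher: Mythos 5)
Your proof is correct, and the sufficiency direction takes a genuinely cleaner route than the paper's. The paper works on an arbitrary finite subset $\left\{ x_{j}\right\} _{j=1}^{J}$, extracts phases $\theta _{ij}$ from the unimodular matrix $U$, uses the cocycle identity from condition (2) to set $\theta _{j}=\theta _{1j}$, defines $\psi \left( x_{j}\right) =e^{-i\theta _{j}}\sqrt{K\left( x_{j},x_{j}\right) /k\left( x_{j},x_{j}\right) }$, and then, because each finite-set solution is only determined up to a unimodular constant, invokes transfinite induction and Zorn's Lemma to glue these local definitions into a global $\psi $. Your one-basepoint formula $\psi \left( x\right) =K\left( x,x_{0}\right) /\bigl( c_{0}\,k\left( x,x_{0}\right) \bigr) $ is in fact exactly the function the paper's finite-set construction produces (unwind $e^{-i\theta _{1j}}=u_{j1}$ to see this), but you observe that it depends only on $x$ and the basepoint, so it is already globally defined and the entire gluing/Zorn step evaporates; the verification then splits, as you say, into condition (1) for the moduli and condition (2) applied to the triple $\left( x,x_{0},y\right) $ for the arguments. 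This buys a strictly more elementary proof at no cost in generality. One small point to tighten: your justification that $c_{0}^{2}=K\left( x_{0},x_{0}\right) /k\left( x_{0},x_{0}\right) $ is positive is muddled --- condition (1) alone cannot give this (with $y=x$ it reads $1=1$), and a constant negative diagonal ratio is consistent with (1). The clean argument is to apply condition (2) with $x=y=z=x_{0}$, which forces $\arg \bigl( K\left( x_{0},x_{0}\right) /k\left( x_{0},x_{0}\right) \bigr) \equiv 0\ \left( \func{mod}2\pi \right) $; since the diagonal values are real and nonzero by self-adjointness and nonvanishing, the ratio is a positive real and $c_{0}$ is well defined. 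With that one-line repair the argument is complete.
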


\begin{proof}
If $K=\left( \psi \otimes \psi ^{\ast }\right) \circ k$, then $K\left(
x,y\right) =\psi \left( x\right) k\left( x,y\right) \overline{\psi \left(
y\right) }$ and we have%
\begin{equation*}
\frac{\left\vert K\left( x,y\right) \right\vert ^{2}}{K\left( x,x\right)
K\left( y,y\right) }=\frac{\left\vert \psi \left( x\right) k\left(
x,y\right) \overline{\psi \left( y\right) }\right\vert ^{2}}{\psi \left(
x\right) k\left( x,x\right) \overline{\psi \left( x\right) }\psi \left(
y\right) k\left( y,y\right) \overline{\psi \left( y\right) }}=\frac{%
\left\vert k\left( x,y\right) \right\vert ^{2}}{k\left( x,x\right) k\left(
y,y\right) },
\end{equation*}%
and%
\begin{eqnarray*}
\arg \frac{K\left( x,z\right) }{k\left( x,z\right) } &=&\arg \psi \left(
x\right) \overline{\psi \left( z\right) }=\arg \psi \left( x\right)
\left\vert \psi \left( y\right) \right\vert ^{2}\overline{\psi \left(
z\right) } \\
&=&\arg \psi \left( x\right) \overline{\psi \left( y\right) }+\arg \psi
\left( y\right) \overline{\psi \left( z\right) } \\
&=&\arg \frac{K\left( x,y\right) }{k\left( x,y\right) }+\arg \frac{K\left(
y,z\right) }{k\left( y,z\right) }.
\end{eqnarray*}

Conversely assume that both conditions (1) and (2) hold. Then given any
finite set of points $\left\{ x_{j}\right\} _{j=1}^{J}$ in $\Omega $, define
a matrix%
\begin{equation*}
U\equiv \left[ u_{ij}\right] _{i,j=1}^{J};\ \ \ \ \ u_{ij}=\frac{\frac{%
K\left( x_{i},x_{j}\right) }{\sqrt{K\left( x_{i},x_{i}\right) }\sqrt{K\left(
x_{j},x_{j}\right) }}}{\frac{k\left( x_{i},x_{j}\right) }{\sqrt{k\left(
x_{i},x_{i}\right) }\sqrt{k\left( x_{j},x_{j}\right) }}}.
\end{equation*}
and note that $U$ is self-adjoint since $K$ and $k$ are, and that $U$ is
unimodular by condition (1). If we set $u_{ij}=e^{i\theta _{ij}}$, then
condition (2) says that%
\begin{eqnarray}
\theta _{i\ell } &=&\arg \frac{K\left( x_{i},x_{\ell }\right) }{k\left(
x_{i},x_{\ell }\right) }=\arg \frac{K_{i\ell }}{k_{i\ell }}=\arg \frac{K_{ij}%
}{k_{ij}}+\arg \frac{K_{j\ell }}{k_{j\ell }}  \label{theta il} \\
&=&\arg \frac{K\left( x_{i},x_{j}\right) }{k\left( x_{i},x_{j}\right) }+\arg 
\frac{K\left( x_{j},x_{\ell }\right) }{k\left( x_{j},x_{\ell }\right) }%
=\theta _{ij}+\theta _{j\ell }\ .  \notag
\end{eqnarray}%
Now define $\theta _{1}=0$ and $\theta _{j}=\theta _{1j}$ for $1\leq j\leq N$%
, so that $\theta _{j}=-\theta _{j1}$ since $U$ is self-adjoint. Then $%
\theta _{1i}-\theta _{1j}=-\theta _{i1}-\theta _{1j}=-\theta _{ij}$ by (\ref%
{theta il}), and so with $\psi \left( x_{j}\right) \equiv e^{-i\theta _{j}}%
\frac{\sqrt{K\left( x_{j},x_{j}\right) }}{\sqrt{k\left( x_{j},x_{j}\right) }}
$, we obtain%
\begin{eqnarray*}
\psi \left( x_{i}\right) k\left( x_{i},x_{j}\right) \overline{\psi \left(
x_{j}\right) } &=&e^{-i\left( \theta _{1i}-\theta _{1j}\right) }\left\{ 
\frac{\sqrt{K\left( x_{i},x_{i}\right) }}{\sqrt{k\left( x_{i},x_{i}\right) }}%
k\left( x_{i},x_{j}\right) \frac{\sqrt{K\left( x_{j},x_{j}\right) }}{\sqrt{%
k\left( x_{j},x_{j}\right) }}\right\} \\
&=&e^{i\theta _{ij}}\left\{ \frac{1}{u_{ij}}K\left( x_{i},x_{j}\right)
\right\} =K\left( x_{i},x_{j}\right) .
\end{eqnarray*}%
Note that the vector $\left\{ \psi \left( x_{i}\right) \right\} _{i=1}^{J}$
is determined uniquely up to a unimodular constant.

Finally, we apply any appropriate form of transfinite induction. From what
we have done above, we get a \emph{consistent} definition of $\psi $ on an
increasing maximal chain of subsets of $\Omega $ since two dyads $\psi
_{1}\otimes \psi _{1}^{\ast }$ and $\psi _{2}\otimes \psi _{2}^{\ast }$ are
equal if and only if there is a unimodular \emph{constant} $e^{i\theta }$
such that $\psi _{1}=e^{i\theta }\psi _{2}$. Now we apply Zorn's Lemma to
get a nonvanishing function $\psi $ on $\Omega $ such that $K\left(
x,y\right) =\psi \left( x\right) k\left( x,y\right) \overline{\psi \left(
y\right) }$. If both kernels are holomorphic in their first variable $x$ and
antiholomorphic in their second variable $y$, then $\psi \left( x\right) 
\overline{\psi \left( y\right) }=\frac{K\left( x,y\right) }{k\left(
x,y\right) }$ is holomorphic and nonvanishing in $x$, which proves that $%
\psi \left( x\right) $ is holomorphic and nonvanishing in $x$. Finally, the
supremum bounds follow from the formula $\left\vert \psi \left( x\right)
\right\vert =\frac{\sqrt{K\left( x,x\right) }}{\sqrt{k\left( x,x\right) }}$.
\end{proof}

\begin{remark}
Condition (1) is equivalent to the equality of distance functions $%
d_{k}\left( x,y\right) =d_{K}\left( x,y\right) $, $x,y\in \Omega $, where
for any kernel $k$ on $\Omega $, the distance function $d_{k}$ is defined by 
\begin{equation*}
d_{k}\left( x,y\right) \equiv \sqrt{1-\frac{\left\vert k\left( x,y\right)
\right\vert ^{2}}{k\left( x,x\right) k\left( y,y\right) }}.
\end{equation*}%
Note that $d_{k}\left( x,y\right) =\sin \theta _{x,y}$ where $\theta _{x,y}$
is the angle between $k_{x}$ and $k_{y}$ in the Hilbert function space $%
\mathcal{H}_{k}$.
\end{remark}

\section{The Bezout kernel multiplier characterization}

The following definition will be used to characterize when we can solve
Bezout's equation with a vector in the space $\oplus _{\ell =1}^{N}K_{%
\mathcal{H}}$ of kernel multipliers of $\mathcal{H}$.

\begin{definition}
\label{RKCP}Let $\mathcal{H}$ be a Hilbert function space on a set $\Omega $
with nonvanishing kernel, and let $\mathcal{H}^{a}$ be the shifted Hilbert
space for $a\in \Omega $. We say that a vector $\varphi \in \oplus _{\ell
=1}^{N}L^{\infty }\left( \Omega \right) $ satisfies the\emph{\ }$\mathcal{H}%
- $\emph{convex Poisson condition} with positive constant $C$ if for every
finite collection of points $\mathbf{a}=\left( a_{1},\ldots ,a_{M}\right)
\in \Omega ^{M}$ and every collection of nonnegative numbers $\mathbf{\theta 
}=\left\{ \theta _{m}\right\} _{m=0}^{M}$ summing to $1=\sum%
\limits_{m=0}^{M}\theta _{m}$, there is a vector $g^{\mathbf{a},\mathbf{%
\theta }}\in \oplus _{\ell =1}^{N}\mathcal{H}$ satisfying%
\begin{eqnarray}
\varphi \left( z\right) \cdot g^{\mathbf{a},\mathbf{\theta }}\left( z\right)
&=&1,\ \ \ \ \ z\in \Omega ,  \label{Constant RKCP} \\
\left\Vert g^{\mathbf{a},\mathbf{\theta }}\right\Vert _{\oplus _{\ell =1}^{N}%
\mathcal{H}^{\mathbf{a,\theta }}}^{2} &=&\theta _{0}\left\Vert g^{\mathbf{a},%
\mathbf{\theta }}\right\Vert _{\oplus _{\ell =1}^{N}\mathcal{H}%
}^{2}+\sum_{m=1}^{M}\theta _{m}\left\Vert g^{\mathbf{a},\mathbf{\theta }%
}\right\Vert _{\oplus _{\ell =1}^{N}\mathcal{H}^{a_{m}}}^{2}\leq C^{2}\ . 
\notag
\end{eqnarray}%
We denote the smallest such constant $C$ by $\left\Vert \varphi \right\Vert
_{cPc}$.
\end{definition}

\subsection{Kernel multiplier solutions}

Here now is our abstract characterization of solutions to Bezout's equation,
which is of primary interest in those cases where the space of kernel
multipliers $K_{\mathcal{H}}$ is an algebra. But first we require additional
structure on our Hilbert function space $\mathcal{H}$ to substitute for
Montel's theorem in complex analysis. This leads to the second main
assumption needed for our alternate Toeplitz corona theorem.

\begin{definition}
\label{def Montel}Let $\Omega $ be a topological space. A Hilbert function
space $\mathcal{H}$ of continuous functions on $\Omega $ is said to be have
the \emph{Montel property} if there is a dense subset $S$ of $\Omega $ with
the property that for every sequence $\left\{ f_{n}\right\} _{n=1}^{\infty }$
in the unit ball of $\mathcal{H}$, there are a subsequence $\left\{
f_{n_{k}}\right\} _{k=1}^{\infty }$ and a function $g$ in the unit ball of $%
\mathcal{H}$, such that%
\begin{equation*}
\lim_{k\rightarrow \infty }f_{n_{k}}\left( x\right) =g\left( x\right) ,\ \ \
\ \ x\in S.
\end{equation*}
\end{definition}

A main ingredient in the proof is the following minimax lemma of von
Neumann, proved for example in \cite{Gam}. This lemma was introduced in this
context by Amar \cite{Amar}, and used subsequently by Trent and Wick \cite%
{TrWi} as well.

\begin{lemma}
\label{von Neumann}Suppose that $M$ is a convex compact subset of a normed
linear space, and that $P$ is a convex subset of a vector space. Let $%
\mathcal{F}:M\times P\rightarrow \left[ 0,\infty \right) $ satisfy\bigskip 
\newline
(\textbf{1}) for each fixed $p\in P$, the section $\mathcal{F}^{p}$ given by 
$\mathcal{F}^{p}\left( m\right) =\mathcal{F}\left( m,p\right) $ is concave
and continuous,\bigskip \newline
(\textbf{2}) for each fixed $m\in M$, the section $\mathcal{F}_{m}$ given by 
$\mathcal{F}_{m}\left( p\right) =\mathcal{F}\left( m,p\right) $ is
convex.\bigskip \newline
Then the following minimax equality holds:%
\begin{equation*}
\sup_{m\in M}\inf_{p\in P}\mathcal{F}\left( m,p\right) =\inf_{p\in
P}\sup_{m\in M}\mathcal{F}\left( m,p\right) .
\end{equation*}
\end{lemma}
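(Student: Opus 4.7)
The easy inequality $\sup_m \inf_p \mathcal{F}(m,p) \leq \inf_p \sup_m \mathcal{F}(m,p)$ is immediate from $\inf_{p'} \mathcal{F}(m, p') \leq \mathcal{F}(m, p) \leq \sup_{m'} \mathcal{F}(m', p)$ for arbitrary $m \in M$ and $p \in P$, followed by taking $\sup_m$ on the left and $\inf_p$ on the right. For the reverse inequality, set $\alpha = \sup_m \inf_p \mathcal{F}(m,p)$. It suffices to show, for each $\varepsilon > 0$, the existence of some $\bar{p} \in P$ with $\mathcal{F}(m, \bar{p}) \leq \alpha + \varepsilon$ uniformly in $m \in M$. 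The plan is a Hahn--Banach separation argument inside the Banach space $C(M)$ of real continuous functions on the compact set $M$, equipped with the uniform norm, combined with a barycenter computation that exploits the embedding of $M$ in a normed linear space.

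In $C(M)$ I would form the convex subset
\[
K = \mathrm{conv}\{\mathcal{F}(\cdot, p) - \alpha - \varepsilon : p \in P\}
\]
and the nonempty open convex cone $U = \{f \in C(M) : f < 0 \text{ on } M\}$. By convexity of each section $\mathcal{F}_m$ and convexity of $P$, every $g \in K$ pointwise dominates $\mathcal{F}(\cdot, \bar{p}) - \alpha - \varepsilon$ for some $\bar{p} \in P$, so if $K \cap U \neq \emptyset$ the required $\bar{p}$ exists. Otherwise Hahn--Banach yields a nonzero continuous linear functional $\Lambda$ on $C(M)$ separating $K$ and $U$. Since $U$ is an open cone, the usual scaling argument forces $\Lambda \leq 0$ on $U$ and $\Lambda \geq 0$ on $K$, so $\Lambda$ is nonnegative on the cone of nonnegative continuous functions. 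By Riesz representation $\Lambda$ is integration against a nonzero positive finite Borel measure on $M$, which I normalize to a probability measure $\mu$. The inequality $\Lambda \geq 0$ on $K$ then reads
\[
\int_M \mathcal{F}(m, p) \, d\mu(m) \geq \alpha + \varepsilon, \quad p \in P.
\]

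Finally I would define the barycenter $\bar{m} = \int_M m \, d\mu(m)$ via Bochner integration in the ambient normed space; since $M$ is convex, closed, and compact, one obtains $\bar{m} \in M$. For each $p \in P$, the continuous concave section $\mathcal{F}^p$ satisfies Jensen's inequality $\mathcal{F}(\bar{m}, p) \geq \int_M \mathcal{F}(m, p) \, d\mu(m) \geq \alpha + \varepsilon$, hence $\inf_p \mathcal{F}(\bar{m}, p) \geq \alpha + \varepsilon > \alpha$, contradicting the definition of $\alpha$. The principal obstacle I anticipate is the barycenter step: one must verify that $\bar{m} \in M$ and that Jensen's inequality for a concave continuous functional of a vector-valued random variable is legitimately available in this generality. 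This is precisely the point at which the hypotheses that $M$ be convex and compact inside a normed linear space, and that each section $\mathcal{F}^p$ be concave and continuous, are both genuinely used.
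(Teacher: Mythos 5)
Your argument is correct. Note that the paper does not actually prove Lemma \ref{von Neumann} --- it is quoted from Gamelin's book \cite{Gam} --- so there is no internal proof to compare against; your separation-plus-barycenter argument is the classical route and is in the spirit of the cited source. The skeleton is sound: the trivial inequality, the reduction to finding $\bar p$ with $\mathcal{F}(\cdot,\bar p)\leq\alpha+\varepsilon$ on $M$, the observation that convexity of $\mathcal{F}_m$ and of $P$ lets any element of $K\cap U$ be dominated from below by some $\mathcal{F}(\cdot,\bar p)-\alpha-\varepsilon$, the separation of $K$ from the open cone $U$, the scaling argument showing the separating functional is positive, and Riesz representation. (Two pieces of housekeeping: one should assume $P\neq\emptyset$, which also gives $\alpha\leq\sup_{m\in M}\mathcal{F}(m,p_0)<\infty$ because $\mathcal{F}^{p_0}$ is continuous on the compact set $M$; and the functional produced by Hahn--Banach is automatically nonzero, so the measure can indeed be normalized.)

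The two steps you rightly single out as the potential obstacle --- that the barycenter $\bar m$ lies in $M$, and that Jensen's inequality applies to the concave continuous sections $\mathcal{F}^p$ --- are best handled together and without Bochner integration, which in a possibly incomplete normed space would a priori place $\bar m$ in the completion. Since $M$ is a compact subset of a metric space, $\mu$ is a weak-$\ast$ limit of finitely supported probability measures $\mu_j=\sum_i\lambda_i^{(j)}\delta_{m_i^{(j)}}$ on $M$; their barycenters $\bar m_j=\sum_i\lambda_i^{(j)}m_i^{(j)}$ lie in $M$ by convexity, a subsequence converges in norm to some $\bar m\in M$ by compactness, and concavity of $\mathcal{F}^p$ gives $\int_M\mathcal{F}(m,p)\,d\mu_j(m)\leq\mathcal{F}(\bar m_j,p)$ for each $j$; passing to the limit using continuity of $\mathcal{F}^p$ and weak-$\ast$ convergence yields $\int_M\mathcal{F}(m,p)\,d\mu(m)\leq\mathcal{F}(\bar m,p)$ for every $p\in P$, which is exactly the Jensen inequality you need and produces the desired contradiction with the definition of $\alpha$. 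With this substitution your proof is complete and uses precisely the hypotheses in the statement: compactness and convexity of $M$, convexity of $P$, and the concavity/continuity and convexity assumptions on the two families of sections.
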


\begin{theorem}
\label{RKCP implies Adult Toe}Let $\mathcal{H}$ be a multiplier stable
Hilbert function space of continuous functions on a separable topological
space $\Omega $, and assume that $\mathcal{H}$ has the Montel property.
Suppose that $\varphi \in \oplus _{\ell =1}^{N}L^{\infty }\left( \Omega
\right) $, and that $C>0$ is a positive constant. Then there is a vector
function $f\in \oplus ^{N}K_{\mathcal{H}}$ satisfying%
\begin{eqnarray}
\varphi \left( z\right) \cdot f\left( z\right) &=&1,\ \ \ \ \ z\in \Omega ,
\label{control} \\
\left\Vert f\right\Vert _{\oplus ^{N}K_{\mathcal{H}}} &\leq &C\ ,  \notag
\end{eqnarray}%
\emph{if and only if} $\varphi $ satisfies the $\mathcal{H}-$convex Poisson
condition in Definition \ref{RKCP} with constant $\left\Vert \varphi
\right\Vert _{\mathcal{H}-cPc}\leq C$.
\end{theorem}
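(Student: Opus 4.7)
The forward direction is immediate: given $f \in \oplus^{N} K_{\mathcal{H}}$ with $\varphi \cdot f = 1$ and $\left\Vert f\right\Vert_{\oplus^{N} K_{\mathcal{H}}} \leq C$, take $g^{\mathbf{a},\mathbf{\theta}} \equiv f$ for every $(\mathbf{a},\mathbf{\theta})$. Since $\max\{\|f\|_{\oplus^{N}\mathcal{H}},\, \sup_{a}\|\widetilde{k_{a}}f\|_{\oplus^{N}\mathcal{H}}\} \leq C$ by Definition \ref{def H inf} and $\sum_{m=0}^{M}\theta_{m}=1$,
\[
\left\Vert f\right\Vert_{\oplus^{N}\mathcal{H}^{\mathbf{a},\mathbf{\theta}}}^{2} = \theta_{0}\left\Vert f\right\Vert_{\oplus^{N}\mathcal{H}}^{2} + \sum_{m=1}^{M}\theta_{m}\left\Vert \widetilde{k_{a_{m}}}f\right\Vert_{\oplus^{N}\mathcal{H}}^{2} \leq C^{2}.
\]

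For the reverse direction, the plan is a two-step argument: first use von Neumann's minimax to produce, for each finite test set, an approximate solution meeting the bound at every one of its points; then use weak compactness and the finite intersection property to assemble a single global solution. Fix $\mathbf{a}=(a_{1},\ldots,a_{M}) \in \Omega^{M}$ and let $P_{\varphi} \equiv \{g \in \oplus^{N}\mathcal{H} : \varphi \cdot g = 1 \text{ on } \Omega\}$, which is an affine hence convex subset of $\oplus^{N}\mathcal{H}$. The functional
\[
\mathcal{F}(\mathbf{\theta}, g) \equiv \theta_{0}\left\Vert g\right\Vert_{\oplus^{N}\mathcal{H}}^{2} + \sum_{m=1}^{M}\theta_{m}\left\Vert g\right\Vert_{\oplus^{N}\mathcal{H}^{a_{m}}}^{2}
\]
on $\Sigma_{M}\times P_{\varphi}$ is linear (hence concave and continuous) in $\mathbf{\theta}$ on the compact simplex, and convex (quadratic) in $g$. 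The $\mathcal{H}$-convex Poisson hypothesis gives $\inf_{g}\mathcal{F}(\mathbf{\theta},g) \leq C^{2}$ for each $\mathbf{\theta}$, so Lemma \ref{von Neumann} yields
\[
\inf_{g \in P_{\varphi}} \sup_{\mathbf{\theta} \in \Sigma_{M}} \mathcal{F}(\mathbf{\theta}, g) = \sup_{\mathbf{\theta}} \inf_{g} \mathcal{F}(\mathbf{\theta}, g) \leq C^{2}.
\]
Since the supremum of the linear $\mathcal{F}(\cdot, g)$ on $\Sigma_{M}$ is attained at a vertex, for every $\epsilon > 0$ this produces $g^{\mathbf{a},\epsilon} \in P_{\varphi}$ with $\|g^{\mathbf{a},\epsilon}\|_{\oplus^{N}\mathcal{H}} \leq \sqrt{C^{2}+\epsilon}$ and $\|\widetilde{k_{a_{m}}}g^{\mathbf{a},\epsilon}\|_{\oplus^{N}\mathcal{H}} \leq \sqrt{C^{2}+\epsilon}$ for every $m$.

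To glue these into a single $f$, for each finite $F \subset \Omega$ and each $\epsilon > 0$ set
\[
U_{F}^{\epsilon} \equiv \left\{ g \in P_{\varphi} : \|g\|_{\oplus^{N}\mathcal{H}} \leq \sqrt{C^{2}+\epsilon},\ \|\widetilde{k_{a}}g\|_{\oplus^{N}\mathcal{H}} \leq \sqrt{C^{2}+\epsilon}\ \forall\,a \in F \right\}.
\]
Each $U_{F}^{\epsilon}$ is convex and norm-bounded in the Hilbert space $\oplus^{N}\mathcal{H}$, and is weakly closed: the Bezout condition is weakly closed since point evaluation on $\mathcal{H}$ is a bounded linear functional, and each condition $\|\widetilde{k_{a}}g\|_{\mathcal{H}} \leq R$ is weakly closed because $\mathcal{M}_{\widetilde{k_{a}}}$ is bounded on $\mathcal{H}$ by the multiplier-stable hypothesis, hence weak-to-weak continuous. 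Thus each $U_{F}^{\epsilon}$ is weakly compact, and the previous step shows it is non-empty. Since $U_{F_{1}\cup F_{2}}^{\min(\epsilon_{1},\epsilon_{2})} \subseteq U_{F_{1}}^{\epsilon_{1}} \cap U_{F_{2}}^{\epsilon_{2}}$, the family has the finite intersection property and $\bigcap_{F,\epsilon} U_{F}^{\epsilon} \neq \emptyset$. Any $f$ in the intersection satisfies $\varphi \cdot f = 1$; taking $F = \emptyset$ and $\epsilon \downarrow 0$ gives $\|f\|_{\oplus^{N}\mathcal{H}} \leq C$, and taking $F = \{a\}$ with $\epsilon \downarrow 0$ gives $\|\widetilde{k_{a}}f\|_{\oplus^{N}\mathcal{H}} \leq C$ for every $a \in \Omega$, i.e.\ $\|f\|_{\oplus^{N}K_{\mathcal{H}}} \leq C$.

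The main subtlety I expect is pinpointing where the multiplier-stable hypothesis is invoked: it is the boundedness of $\mathcal{M}_{\widetilde{k_{a}}}$ on $\mathcal{H}$ that makes the norm-ball conditions weakly closed, and hence underwrites the FIP/weak-compactness step; one should also verify that the minimax hypotheses of Lemma \ref{von Neumann} apply to the affine $P_{\varphi}$ (if needed, translate by a fixed solution to identify $P_{\varphi}$ with a linear subspace). The Montel hypothesis does not seem strictly necessary in this Hilbert-space argument, since weak compactness of bounded convex sets in $\oplus^{N}\mathcal{H}$ already delivers the conclusion; presumably the paper's argument uses Montel either to construct $f$ as an explicit pointwise limit along a countable dense subset (available by separability of $\Omega$, via diagonal extraction), or with a view toward the Banach-space extensions in Section 7 where the analogous norm balls lack automatic weak compactness.
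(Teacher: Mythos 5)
Your argument is correct, and while the first half (the trivial forward direction and the use of von Neumann's minimax on $\mathcal{F}(\mathbf{\theta},g)=\left\Vert g\right\Vert _{\oplus ^{N}\mathcal{H}^{\mathbf{a,\theta }}}^{2}$) matches the paper, the gluing step is genuinely different. The paper runs the minimax over the convex hull of the cPc solutions $g^{\mathbf{a},\mathbf{\theta }}$ attached to a countable dense subset $S$ of $\Omega $, obtains near-minimizers $f^{\left( M\right) }$, converts the $\mathcal{H}^{a_{m}}$-bounds into pointwise bounds via the $\mathcal{H}$-Poisson reproducing formula of Lemma \ref{H Poisson}, extracts a pointwise limit on $S$ by the Montel property, recovers the Bezout equation on all of $\Omega $ from continuity and density, and finally upgrades the norm bounds from $S$ to all of $\Omega $ using the lower semicontinuity of $a\mapsto k_{a}$. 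You instead run the minimax over the full affine solution set $P_{\varphi }$ (legitimate: the cPc hypothesis makes each $\inf_{g}$ at most $C^{2}$, Lemma \ref{comp} makes $\mathcal{F}$ finite there, and enlarging $P$ only helps) and then glue by weak compactness of bounded, convex, weakly closed sets in the reflexive space $\oplus ^{N}\mathcal{H}$ together with the finite intersection property. Your route is cleaner for this Hilbert-space statement: it needs neither the Montel property, nor separability, nor continuity of the functions in $\mathcal{H}$, nor the semicontinuity half of multiplier stability --- only the boundedness of $\mathcal{M}_{\widetilde{k_{a}}}$, which makes the constraint sets weakly closed --- and it delivers $\left\Vert f\widetilde{k_{a}}\right\Vert _{\oplus ^{N}\mathcal{H}}\leq C$ at \emph{every} $a\in \Omega $ directly, thereby strengthening the paper's Porism (which, without semicontinuity, only concludes $f\in \oplus ^{N}\mathcal{H}^{\infty }$). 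What the paper's route buys is a template that does not lean on reflexivity or weak compactness, which is closer in spirit to the Banach-space setting of Section 7; your closing observation that the Montel hypothesis is not actually needed for the Hilbert-space version is accurate.
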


\begin{description}
\item[Porism] If we drop the lower semicontinuity assumption (2) in the
definition of multiplier stability, then the proof below shows that if $%
\varphi \in \oplus _{\ell =1}^{N}L^{\infty }\left( \Omega \right) $
satisfies the $\mathcal{H}-$convex Poisson condition in Definition \ref{RKCP}%
, then there is a vector function $f\in \oplus ^{N}\mathcal{H}^{\infty }$
satisfying%
\begin{eqnarray*}
\varphi \left( z\right) \cdot f\left( z\right) &=&1,\ \ \ \ \ z\in \Omega ,
\\
\left\Vert f\right\Vert _{\oplus ^{N}\mathcal{H}^{\infty }} &\leq
&\left\Vert \varphi \right\Vert _{\mathcal{H}-cPc}\ .
\end{eqnarray*}%
However, there is no converse assertion here in general.
\end{description}

\begin{proof}
Fix for the moment an integer $M\geq 1$ and a positive constant $\varepsilon
_{M}$ to be chosen later. Now fix $\mathbf{a}=\left\{ a_{m}\right\}
_{m=1}^{M}\subset \Omega $ and consider the simplex%
\begin{equation*}
\Sigma _{M}\equiv \left\{ \mathbf{\theta }=\left\{ \theta _{m}\right\}
_{m=0}^{M}\subset \left[ 0,1\right] ^{M+1}:\sum\limits_{j=0}^{M}\theta
_{j}=1\right\} .
\end{equation*}%
For each $\mathbf{\theta }\in \Sigma _{M}$ pick $g^{\mathbf{a},\mathbf{%
\theta }}\in \mathcal{H}^{\mathbf{a},\mathbf{\theta }}\left( \Omega \right) $
as in the $cPc$ Definition \ref{RKCP} for $\varphi $, i.e.%
\begin{eqnarray*}
\varphi (z)\cdot g^{\mathbf{a},\mathbf{\theta }}(z) &=&1\text{ in }\Omega ,
\\
\left\Vert g^{\mathbf{a},\mathbf{\theta }}\right\Vert _{\oplus _{\ell =1}^{N}%
\mathcal{K}^{\mathbf{a},\mathbf{\theta }}\left( \Omega \right) } &\leq
&\left\Vert \varphi \right\Vert _{cPc}\ .
\end{eqnarray*}%
Note that 
\begin{equation}
\left\Vert g^{\mathbf{a},\mathbf{\theta }}\right\Vert _{\oplus _{\ell =1}^{N}%
\mathcal{H}^{\mathbf{a},\mathbf{\theta }^{\prime }}\left( \Omega \right)
}<\infty ,\ \ \ \ \ \text{for all }\mathbf{\theta },\mathbf{\theta }^{\prime
}\in \Sigma _{M}\ ,  \label{reason}
\end{equation}%
since as observed earlier, it follows from Lemma \ref{comp} that all of the
interpolating spaces $\mathcal{H}^{\mathbf{a},\mathbf{\theta }}\left( \Omega
\right) $ are comparable with $\mathcal{H}$. It is here that we use the full
force of our assumption that $\mathcal{H}$ is multiplier stable, as opposed
to merely assuming that the kernel functions $k_{a}$ are nonvanishing
multipliers. Indeed, with only the latter assumption, an element of $%
\mathcal{H}^{a}$ has the form $\frac{1}{\widetilde{k_{a}}}g$ for $g\in 
\mathcal{H}$, and then $\left\Vert g\right\Vert _{\mathcal{H}^{a^{\prime
}}}^{2}=\left\Vert \frac{\widetilde{k_{a^{\prime }}}}{\widetilde{k_{a}}}%
g\right\Vert _{\mathcal{H}}^{2}$ could be infinite.

Thus $\mathcal{H}^{\mathbf{a,\theta }}\subset \mathcal{H}$, and we can
define the set 
\begin{equation*}
\mathcal{C}_{\mathbf{a},M}\equiv \limfunc{convex}\limfunc{hull}\left\{ g^{%
\mathbf{a},\mathbf{\theta }}:\mathbf{\theta }\in \Sigma _{M}\left(
\varepsilon _{M}\right) \right\}
\end{equation*}%
to be the convex hull in $\oplus _{\ell =1}^{N}\mathcal{H}$ of these Bezout
solutions $g^{\mathbf{a},\mathbf{\theta }}$. We will apply the von Neumann
minimax equality in Lemma \ref{von Neumann} to the functional%
\begin{equation*}
\mathcal{F}_{\mathbf{a}}\left( \mathbf{\theta },f\right) \equiv \left\Vert
f\right\Vert _{\oplus _{\ell =1}^{N}\mathcal{H}^{\mathbf{a,\theta }%
}}^{2}=\theta _{0}\left\Vert f\right\Vert _{\oplus _{\ell =1}^{N}\mathcal{H}%
}^{2}+\sum_{m=1}^{M}\theta _{m}\left\Vert f\right\Vert _{\oplus _{\ell
=1}^{N}\mathcal{H}^{a_{m}}}^{2},
\end{equation*}%
defined for $\mathbf{\theta }\in \Sigma _{M}$, and for $f\in \mathcal{C}_{%
\mathbf{a},M}$, noting that $\mathcal{F}_{\mathbf{a}}\left( \mathbf{\theta }%
,f\right) $ is then finite by (\ref{reason}).

Both $\Sigma _{M}$ and $\mathcal{C}_{\mathbf{a},M}$ are convex, and in
addition $\Sigma _{M}$ is compact. The functional $\mathcal{F}_{\mathbf{a}%
}\left( \mathbf{\theta },f\right) $ is linear and continuous in $\mathbf{%
\theta }$, hence also concave. It is also convex in $f$ since if $\lambda
_{i}\geq 0$ and $\sum_{i=1}^{L}\lambda _{i}=1$ and $A_{i}\in \mathcal{C}_{%
\mathbf{a},M}$, then%
\begin{eqnarray*}
\mathcal{F}_{\mathbf{a}}\left( \mathbf{\theta },\sum_{i=1}^{L}\lambda
_{i}A_{i}\right) &=&\left\Vert \sum_{i=1}^{L}\lambda _{i}A_{i}\right\Vert
_{\oplus _{\ell =1}^{N}\mathcal{H}^{\mathbf{a},\mathbf{\theta }}\left(
\Omega \right) }^{2}\leq \left( \sum_{i=1}^{L}\lambda _{i}\left\Vert
A_{i}\right\Vert _{\oplus _{\ell =1}^{N}\mathcal{H}^{\mathbf{a},\mathbf{%
\theta }}\left( \Omega \right) }\right) ^{2} \\
&\leq &\sum_{i=1}^{L}\lambda _{i}\left\Vert A_{i}\right\Vert _{\oplus _{\ell
=1}^{N}\mathcal{H}^{\mathbf{a},\mathbf{\theta }}\left( \Omega \right)
}^{2}=\sum_{i=1}^{L}\lambda _{i}\mathcal{F}_{\mathbf{a}}\left( \mathbf{%
\theta },A_{i}\right) .
\end{eqnarray*}

Thus the minimax equality in Lemma \ref{von Neumann} applies to give%
\begin{eqnarray*}
\inf_{f\in \mathcal{C}_{\mathbf{a},M}}\sup_{\mathbf{\theta }\in \Sigma _{M}}%
\mathcal{F}_{\mathbf{a}}\left( \mathbf{\theta },f\right) &=&\sup_{\mathbf{%
\theta }\in \Sigma _{M}}\inf_{f\in \mathcal{C}_{\mathbf{a},M}}\mathcal{F}_{%
\mathbf{a}}\left( \mathbf{\theta },f\right) \\
&\leq &\sup_{\mathbf{\theta }\in \Sigma _{M}}\mathcal{F}_{\mathbf{a}}\left( 
\mathbf{\theta },g^{\mathbf{a},\mathbf{\theta }}\right) \\
&=&\sup_{\mathbf{\theta }\in \Sigma _{M}}\left\Vert g^{\mathbf{a},\mathbf{%
\theta }}\right\Vert _{\oplus _{\ell =1}^{N}\mathcal{H}^{\mathbf{a},\mathbf{%
\theta }}\left( \Omega \right) }^{2}\leq \left\Vert \varphi \right\Vert
_{cPc}^{2}\ .
\end{eqnarray*}%
So for each $M\geq 1$, we can pick $f^{\left( M\right) }\in \mathcal{C}_{%
\mathbf{a},M}\subset \mathcal{H}^{\mathbf{a},\mathbf{\theta }}\left( \Omega
\right) $ so that $f^{\left( M\right) }$ is almost optimal for $\inf_{f\in 
\mathcal{C}_{\mathbf{a},M}}$ in the display above, more precisely, 
\begin{equation*}
\sup_{\mathbf{\theta }\in \Sigma _{M}}\mathcal{F}_{\mathbf{a}}\left( \mathbf{%
\theta },f^{\left( M\right) }\right) <\left( 1+\varepsilon _{M}\right)
\left\Vert \varphi \right\Vert _{cPc}^{2}.
\end{equation*}%
We also have $\varphi (z)\cdot f^{\left( M\right) }(z)=1$. Let $\mathbf{e}%
_{j}=\left( \theta _{0},\theta _{1},\ldots ,\theta _{M}\right) \in \Sigma
_{M}$ where $\theta _{j}=1$ and $\theta _{k}=0$ for $k\neq j$. Then we have 
\begin{equation}
\left\Vert f^{\left( M\right) }\right\Vert _{\oplus _{\ell =1}^{N}\mathcal{H}%
\left( \Omega \right) }^{2}=\mathcal{F}_{\mathbf{a}}\left( \mathbf{e}%
_{0},f^{\left( M\right) }\right) \leq \sup_{\mathbf{\theta }\in \Sigma _{M}}%
\mathcal{F}_{\mathbf{a}}\left( \mathbf{\theta },f^{\left( M\right) }\right)
\leq \left( 1+\varepsilon _{M}\right) \left\Vert \varphi \right\Vert
_{cPc}^{2}\ ,  \label{H2 normal}
\end{equation}%
and for each $\ell \geq 1$ we have,%
\begin{equation}
\left\Vert f^{\left( M\right) }\right\Vert _{\oplus _{\ell =1}^{N}\mathcal{H}%
^{a_{\ell }}\left( \Omega \right) }^{2}=\left\Vert f^{\left( M\right)
}\right\Vert _{\oplus _{\ell =1}^{N}\mathcal{H}^{a_{\ell }}\left( \Omega
\right) }^{2}\leq \mathcal{F}_{\mathbf{a}}\left( \widetilde{\mathbf{e}_{\ell
}},f^{\left( M\right) }\right) \leq \sup_{\mathbf{\theta }\in \Sigma _{M}}%
\mathcal{F}_{\mathbf{a}}\left( \mathbf{\theta },f^{\left( M\right) }\right)
\leq \left( 1+\varepsilon _{M}\right) \left\Vert \varphi \right\Vert
_{cPc}^{2}\ .  \label{pointwise control}
\end{equation}%
Now we use the multiplier stable hypothesis together with the $\mathcal{H}$%
-Poisson reproducing formula (\ref{H-reproducing}) in Lemma \ref{H Poisson},
and then the Cauchy-Schwarz inequality for inner products, to obtain the
pointwise estimates%
\begin{eqnarray*}
\left\vert f^{\left( M\right) }\left( a_{m}\right) \right\vert ^{2}
&=&\left\vert \left\langle f^{\left( M\right) },1\right\rangle _{\oplus
_{\ell =1}^{N}\mathcal{H}^{a_{m}}}\right\vert ^{2}\leq \left\langle
f^{\left( M\right) },f^{\left( M\right) }\right\rangle _{\oplus _{\ell
=1}^{N}\mathcal{H}^{a_{m}}}\left\langle 1,1\right\rangle _{\oplus _{\ell
=1}^{N}\mathcal{H}^{a_{m}}} \\
&=&\left\Vert f^{\left( M\right) }\right\Vert _{\oplus _{\ell =1}^{N}%
\mathcal{H}^{a_{m}}\left( \Omega \right) }^{2}\left\langle \widetilde{%
k_{a_{m}}},\widetilde{k_{a_{m}}}\right\rangle _{\oplus _{\ell =1}^{N}%
\mathcal{H}}\leq \left( 1+\varepsilon _{M}\right) \left\Vert \varphi
\right\Vert _{cPc}^{2}\ .
\end{eqnarray*}

Now by separability of $\Omega $, we can choose a dense set $S=\left\{
a_{m}\right\} _{m=1}^{\infty }$ in $\Omega $ as in Definition \ref{def
Montel}, and then choose the positive constants $\varepsilon _{M}$ so small
that 
\begin{equation*}
\lim_{M\rightarrow \infty }\varepsilon _{M}=0.
\end{equation*}%
From (\ref{H2 normal}) and the fact that $\mathcal{H}$, and so also $\oplus
_{\ell =1}^{N}\mathcal{H}$, has the Montel property, we conclude that there
is a vector function $f\in \oplus _{\ell =1}^{N}\mathcal{H}$ with 
\begin{equation*}
\left\Vert f\right\Vert _{\oplus _{\ell =1}^{N}\mathcal{H}}^{2}\leq
\lim_{M\rightarrow \infty }\left( 1+\varepsilon _{M}\right) \left\Vert
\varphi \right\Vert _{cPc}^{2}=\left\Vert \varphi \right\Vert _{cPc}^{2}
\end{equation*}%
and 
\begin{equation*}
f\left( a_{m}\right) =\lim_{M\rightarrow \infty }f^{\left( M\right) }\left(
a_{m}\right) ,\ \ \ \ \ m\geq 1.
\end{equation*}%
Thus we have%
\begin{equation*}
1=\lim_{M\rightarrow \infty }1=\lim_{M\rightarrow \infty }\varphi \left(
a_{m}\right) \cdot f^{\left( M\right) }\left( a_{m}\right) =\varphi \left(
a_{m}\right) \cdot \lim_{M\rightarrow \infty }f^{\left( M\right) }\left(
a_{m}\right) =\varphi \left( a_{m}\right) \cdot f\left( a_{m}\right) 
\end{equation*}%
for all $m\geq 1$, and hence $f\in \oplus _{\ell =1}^{N}\mathcal{H}$ is a
solution to the Bezout equation $\varphi \left( z\right) \cdot f\left(
z\right) =1$ for $z\in \Omega $ since $\varphi \cdot f$ is continuous and $S$
is dense. We also have from (\ref{pointwise control}) the pointwise estimate 
\begin{equation*}
\left\vert f\left( a_{m}\right) \right\vert =\lim_{M\rightarrow \infty
}\left\vert f^{\left( M\right) }\left( a_{m}\right) \right\vert \leq \lim
\sup_{M\rightarrow \infty }\left( 1+\varepsilon _{M}\right) \left\Vert
\varphi \right\Vert _{cPc}=\left\Vert \varphi \right\Vert _{cPc}
\end{equation*}%
for each $m\geq 1$. Since $f\in \oplus _{\ell =1}^{N}\mathcal{H}$ satisfies $%
\left\Vert f\right\Vert _{\oplus _{\ell =1}^{N}\mathcal{H}}\leq \left\Vert
\varphi \right\Vert _{cPc}$ and is continuous in $\Omega $, and since $%
S=\left\{ a_{m}\right\} _{m=1}^{\infty }$ is dense in $\Omega $, we thus
have 
\begin{equation*}
\left\Vert f\right\Vert _{\oplus _{\ell =1}^{N}\mathcal{H}^{\infty }}=\max
\left\{ \left\Vert f\right\Vert _{\oplus _{\ell =1}^{N}\mathcal{H}%
},\left\Vert f\right\Vert _{L^{\infty }\left( \ell _{N}^{2}\right) }\right\}
\leq \left\Vert \varphi \right\Vert _{cPc}\ ,
\end{equation*}%
since $\left\Vert f\right\Vert _{L^{\infty }\left( \ell _{N}^{2}\right)
}=\sup_{z\in \Omega }\left\vert f\left( z\right) \right\vert $.

Finally, using the lower semicontinuity assumption (2) in the definition of
multiplier stability (which has not been needed until now), we also have the
stronger estimate%
\begin{equation}
\left\Vert f\right\Vert _{\oplus _{\ell =1}^{N}K_{\mathcal{H}}}=\max \left\{
\left\Vert f\right\Vert _{\mathcal{H}},\sup_{a\in \Omega }\left\Vert f%
\widetilde{k_{a}}\right\Vert _{\mathcal{H}}\right\} =\max \left\{ \left\Vert
f\right\Vert _{\mathcal{H}},\sup_{a\in \Omega }\left\Vert f\right\Vert _{%
\mathcal{H}^{a}}\right\} \leq \left\Vert \varphi \right\Vert _{cPc}.
\label{stronger estimate}
\end{equation}%
Indeed, upon letting $M\rightarrow \infty $ in (\ref{pointwise control}), we
obtain%
\begin{equation*}
\max \left\{ \left\Vert f\right\Vert _{\mathcal{H}},\sup_{1\leq m<\infty
}\left\Vert f\right\Vert _{\mathcal{H}^{a_{m}}}\right\} \leq \left\Vert
\varphi \right\Vert _{cPc}.
\end{equation*}%
The map $a\rightarrow \widetilde{k_{a}}$ is lower semicontinuous from $%
\Omega $ to $M_{\mathcal{H}}$ by assumption, and it follows that $\left\Vert
f\right\Vert _{\mathcal{H}^{a}}=\left\Vert f\widetilde{k_{a}}\right\Vert _{%
\mathcal{H}}=\left\Vert \mathcal{M}_{\widetilde{k_{a}}}f\right\Vert _{%
\mathcal{H}}$ is a lower semicontinuous function of $a\in \Omega $. The
proof of (\ref{stronger estimate}) is now completed using that $S=\left\{
a_{m}\right\} _{m=1}^{\infty }$ is dense in $\Omega $, so that $\left\Vert
f\right\Vert _{\mathcal{H}^{a}}\leq \lim \inf_{a_{m}\rightarrow a}\left\Vert
f\right\Vert _{\mathcal{H}^{a_{m}}}\leq \left\Vert \varphi \right\Vert
_{cPc} $ for all $a\in \Omega $.

The converse assertion is straightforward. Suppose that $f\in \oplus _{\ell
=1}^{N}K_{\mathcal{H}}$ solves $\varphi \cdot f=1$ in $\Omega $. Then $%
\left\Vert f\widetilde{k_{a_{m}}}\right\Vert _{\oplus _{\ell =1}^{N}\mathcal{%
H}}\leq \left\Vert f\right\Vert _{\oplus _{\ell =1}^{N}K_{\mathcal{H}}}$ and
so 
\begin{eqnarray*}
\left\Vert f\right\Vert _{\oplus _{\ell =1}^{N}\mathcal{H}^{\mathbf{a,\theta 
}}}^{2} &=&\theta _{0}\left\Vert f\right\Vert _{\oplus _{\ell =1}^{N}%
\mathcal{H}}^{2}+\sum_{m=1}^{M}\theta _{m}\left\Vert f\right\Vert _{\oplus
_{\ell =1}^{N}\mathcal{H}^{a_{m}}}^{2} \\
&=&\theta _{0}\left\Vert f\right\Vert _{\oplus _{\ell =1}^{N}\mathcal{H}%
}^{2}+\sum_{m=1}^{M}\theta _{m}\left\Vert f\widetilde{k_{a_{m}}}\right\Vert
_{\oplus _{\ell =1}^{N}\mathcal{H}}^{2} \\
&\leq &\theta _{0}\left\Vert f\right\Vert _{\oplus _{\ell =1}^{N}K_{\mathcal{%
H}}}^{2}+\sum_{m=1}^{M}\theta _{m}\left\Vert f\right\Vert _{\oplus _{\ell
=1}^{N}K_{\mathcal{H}}}^{2}=\left\Vert f\right\Vert _{\oplus _{\ell
=1}^{N}K_{\mathcal{H}}}^{2}\ .
\end{eqnarray*}
\end{proof}

\section{The alternate Toeplitz corona theorem}

We begin by establishing notation, in particular various corona properties.

\subsection{Corona Properties}

Fix $N\in \mathbb{N}$ throughout this subsection. Given a Hilbert function
space $\mathcal{H}$ on a set $\Omega $, denote by $\oplus ^{N}K_{\mathcal{H}%
} $ and $\oplus ^{N}M_{\mathcal{H}}$ the direct sum of the kernel multiplier
spaces $K_{\mathcal{H}}$ and the multiplier algebras $M_{\mathcal{H}}$
respectively, equipped with the norms $\left\Vert \varphi \right\Vert
_{\oplus ^{N}K_{\mathcal{H}}}$ and $\left\Vert \varphi \right\Vert _{\oplus
^{N}M_{\mathcal{H}}}^{\limfunc{column}}$ respectively, as introduced in a
previous subsection. We often write $\left\Vert \varphi \right\Vert _{\oplus
^{N}M_{\mathcal{H}}}=\left\Vert \varphi \right\Vert _{\oplus ^{N}M_{\mathcal{%
H}}}^{\limfunc{column}}$ from now on. Next we define two `baby' properties:

\begin{enumerate}
\item the first solves $\varphi \cdot f=k_{a}$ with $f\in \mathcal{H}$ for
all $a\in \Omega $ and all $\varphi \in K_{\mathcal{H}}$ in the kernel
multiplier space,

\item the second solves $\varphi \cdot f=h$ with $f\in \mathcal{H}$ for all $%
h\in \mathcal{H}$ and all $\varphi \in M_{\mathcal{H}}$ in the multiplier
algebra.
\end{enumerate}

Note that we typically define \textbf{conditions} that a vector $\varphi $
of corona data might have, and we typically define \textbf{properties} that
a space might have in terms of these conditions.

\subsubsection{Kernel Corona Property}

\begin{definition}
\label{BKP}Suppose $\mathcal{H}$ is a Hilbert function space on a set $%
\Omega $.

\begin{enumerate}
\item Let $C>0$. We say that a vector $\varphi =\left( \varphi _{1},\ldots
,\varphi _{N}\right) \in \oplus ^{N}L^{\infty }\left( \Omega \right) $
satisfies the $\mathcal{H-}$\emph{kernel corona\textbf{\ }condition} with
constant $C$ if for each $a\in \Omega $ there are $f_{1},\ldots ,f_{N}\in 
\mathcal{H}$ satisfying 
\begin{eqnarray}
\left\Vert f\right\Vert _{\oplus ^{N}\mathcal{H}}^{2} &=&\left\Vert
f_{1}\right\Vert _{\mathcal{H}}^{2}+\cdots +\left\Vert f_{N}\right\Vert _{%
\mathcal{H}}^{2}\leq C^{2}\left\Vert k_{a}\right\Vert _{\mathcal{H}}^{2},
\label{corcon k} \\
\left( \varphi \cdot f\right) \left( z\right) &=&\varphi _{1}\left( z\right)
f_{1}\left( z\right) +\cdots +\varphi _{N}\left( z\right) f_{N}\left(
z\right) =k_{a}\left( z\right) ,\ \ \ \ \ z\in \Omega .  \notag
\end{eqnarray}

\item Let $c,C>0$. We say that $\mathcal{H}$ has the \emph{Kernel Corona%
\textbf{\ }Property} with constants $c,C$ if for every vector $\varphi
=\left( \varphi _{1},\ldots ,\varphi _{N}\right) \in \oplus ^{N}K_{\mathcal{H%
}}$ satisfying both 
\begin{equation*}
\left\Vert \varphi \right\Vert _{\oplus ^{N}K_{\mathcal{H}}}\leq 1
\end{equation*}%
and 
\begin{equation}
\left\vert \varphi _{1}\left( z\right) \right\vert ^{2}+\cdots +\left\vert
\varphi _{N}\left( z\right) \right\vert ^{2}\geq c^{2}>0,\ \ \ \ \ z\in
\Omega ,  \label{lowerphi}
\end{equation}%
the vector $\varphi =\left( \varphi _{1},\ldots ,\varphi _{N}\right) $
satisfies the $\mathcal{H}$-kernel corona condition with constant $C$.
\end{enumerate}
\end{definition}

\subsubsection{Baby Corona Property}

\begin{definition}
\label{BCP}Suppose $\mathcal{H}$ is a Hilbert function space on a set $%
\Omega $.

\begin{enumerate}
\item Let $C>0$. We say that a vector $\varphi =\left( \varphi _{1},\ldots
,\varphi _{N}\right) \in \oplus ^{N}L^{\infty }\left( \Omega \right) $
satisfies the $\mathcal{H-}$\emph{baby corona\textbf{\ }condition} with
constant $C$ if for each $h\in \mathcal{H}$ there are $f_{1},\ldots
,f_{N}\in \mathcal{H}$ satisfying 
\begin{eqnarray}
\left\Vert f\right\Vert _{\oplus ^{N}\mathcal{H}}^{2} &=&\left\Vert
f_{1}\right\Vert _{\mathcal{H}}^{2}+\cdots +\left\Vert f_{N}\right\Vert _{%
\mathcal{H}}^{2}\leq C^{2}\left\Vert h\right\Vert _{\mathcal{H}}^{2},
\label{corcon} \\
\left( \varphi \cdot f\right) \left( z\right) &=&\varphi _{1}\left( z\right)
f_{1}\left( z\right) +\cdots +\varphi _{N}\left( z\right) f_{N}\left(
z\right) =h\left( z\right) ,\ \ \ \ \ z\in \Omega .  \notag
\end{eqnarray}

\item Let $c,C>0$. We say that $\mathcal{H}$ has the \emph{Baby Corona
Property }with constants $c,C$ if for every vector $\varphi =\left( \varphi
_{1},\ldots ,\varphi _{N}\right) \in \oplus ^{N}M_{\mathcal{H}}$ satisfying
both 
\begin{equation*}
\left\Vert \varphi \right\Vert _{\oplus ^{N}M_{\mathcal{H}}}\leq 1
\end{equation*}%
and 
\begin{equation*}
\left\vert \varphi _{1}\left( z\right) \right\vert ^{2}+\cdots +\left\vert
\varphi _{N}\left( z\right) \right\vert ^{2}\geq c^{2}>0,\ \ \ \ \ z\in
\Omega ,
\end{equation*}%
the vector $\varphi =\left( \varphi _{1},\ldots ,\varphi _{N}\right) $
satisfies the $\mathcal{H}$-baby corona condition with constant $C$.
\end{enumerate}
\end{definition}

\subsubsection{Corona Property}

Given a Hilbert function space $\mathcal{H}$ on $\Omega $, and an algebra $%
\mathcal{A}$ contained in $\mathcal{H}^{\infty }=\mathcal{H}\cap L^{\infty
}\left( \Omega \right) $, we define the Corona Property for the algebra $%
\mathcal{A}$ as follows. There is a small abuse of notation here since the
definition we give will depend on\thinspace $N$ and the norm $\left\Vert
\cdot \right\Vert _{\oplus ^{N}\mathcal{A}}$ that we use for the direct sum $%
\oplus ^{N}\mathcal{A}$. Thus we should really define the Corona Property
for the triple $\left( \mathcal{A},N,\left\Vert \cdot \right\Vert _{\oplus
^{N}\mathcal{A}}\right) $, but we will often suppress the dependence on $N$
and $\left\Vert \cdot \right\Vert _{\oplus ^{N}\mathcal{A}}$ and only
specify them when needed.

\begin{definition}
\label{ACP}Suppose $N\geq 2$, $\mathcal{H}$ is a Hilbert function space on a
set $\Omega $, $\mathcal{A}$ is an algebra contained in $\mathcal{H}^{\infty
}$, and the direct sum $\oplus ^{N}\mathcal{A}$ is normed by $\left\Vert
\cdot \right\Vert _{\oplus ^{N}\mathcal{A}}$.

\begin{enumerate}
\item Let $C>0$. We say that the vector $\varphi =\left( \varphi _{1},\ldots
,\varphi _{N}\right) \in \oplus ^{N}\mathcal{A}$ satisfies the $\mathcal{A}$%
\emph{-corona condition }with constant $C$ if for each $h\in \mathcal{A}$
there are $f_{1},\ldots ,f_{N}\in \mathcal{A}$ satisfying 
\begin{eqnarray}
\left\Vert f\right\Vert _{\oplus ^{N}\mathcal{A}}^{2} &\leq &C^{2}\left\Vert
h\right\Vert _{\mathcal{A}}^{2},  \label{corcon adult} \\
\varphi \cdot f\left( z\right) &=&h\left( z\right) ,\ \ \ \ \ z\in \Omega . 
\notag
\end{eqnarray}

\item Let $c,C>0$. We say that $\mathcal{A}$ has the \emph{Corona Property}
with constants $c,C$ if for every vector $\varphi =\left( \varphi
_{1},\ldots ,\varphi _{N}\right) \in \oplus ^{N}\mathcal{A}$ satisfying both 
$\left\Vert \varphi \right\Vert _{\oplus ^{N}\mathcal{A}}\leq 1$ and (\ref%
{lowerphi}), the vector $\varphi =\left( \varphi _{1},\ldots ,\varphi
_{N}\right) $ satisfies the $\mathcal{A}$-corona condition with constant $C$.
\end{enumerate}
\end{definition}

\begin{remark}
If there are $f_{1},\ldots ,f_{N}\in \mathcal{A}$ satisfying (\ref{corcon
adult}) with $h=1$, then we can multiply the equation through by $h\in 
\mathcal{A}$, and use that $\mathcal{A}$ is an algebra to obtain $%
f_{1}h,\ldots ,f_{N}h\in \mathcal{A}$, and hence that $\mathcal{A}$
satisfies the Corona Property.
\end{remark}

\subsection{Convex shifted spaces}

At this point we pause to note that (\ref{Constant RKCP}) holds for all the 
\emph{extreme} cases $\mathbf{\theta }=\mathbf{e}_{m}$ provided a slight
strengthening of it holds for $\mathbf{\theta }=\mathbf{e}_{0}$. More
precisely we have the following lemma.

\begin{lemma}
\label{shifted kernels}Suppose that a vector $\varphi \in \oplus _{\ell
=1}^{N}L^{\infty }\left( \Omega \right) $ satisfies the $\mathcal{H}$-kernel
corona condition (\ref{corcon k}) with constant $C$. Then for every $a\in
\Omega $, there is $g\in \oplus ^{N}\mathcal{H}^{a}$ (depending on $a$) with 
$\left\Vert g\right\Vert _{\oplus ^{N}\mathcal{H}^{a}}^{2}\leq C^{2}$ such
that $\varphi \cdot g=1$.
\end{lemma}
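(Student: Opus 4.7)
The plan is to simply rescale the $\mathcal{H}$-kernel corona solution by dividing through by $k_a$, and verify that the resulting vector lives in $\oplus^N \mathcal{H}^a$ with the right norm control.

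More concretely, fix $a \in \Omega$. By the $\mathcal{H}$-kernel corona condition applied at $a$, there is a vector $f = (f_1,\ldots,f_N) \in \oplus^N \mathcal{H}$ with $\varphi \cdot f = k_a$ pointwise on $\Omega$ and $\|f\|_{\oplus^N \mathcal{H}}^2 \leq C^2 \|k_a\|_{\mathcal{H}}^2$. I would set $g_\ell \equiv f_\ell / k_a$ for each $\ell$, so that $\varphi \cdot g = (\varphi \cdot f)/k_a = 1$ on $\Omega$ (this makes sense because $k_a$ is nonvanishing, which is part of the running multiplier-stable setup and is implicit in the way shifted spaces $\mathcal{H}^a$ are defined).

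The main step is then to check that each $g_\ell$ belongs to $\mathcal{H}^a$ with the right norm. By Lemma \ref{weighted space}, $\|g_\ell\|_{\mathcal{H}^a} = \|\widetilde{k_a}\, g_\ell\|_{\mathcal{H}}$. Since $\widetilde{k_a}/k_a = 1/\sqrt{k_a(a)}$ is a nonzero \emph{constant}, we get
\begin{equation*}
\|g_\ell\|_{\mathcal{H}^a} = \left\|\frac{\widetilde{k_a}}{k_a}\, f_\ell\right\|_{\mathcal{H}} = \frac{1}{\sqrt{k_a(a)}}\,\|f_\ell\|_{\mathcal{H}} = \frac{\|f_\ell\|_{\mathcal{H}}}{\|k_a\|_{\mathcal{H}}},
\end{equation*}
using $\|k_a\|_{\mathcal{H}}^2 = k_a(a)$. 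Summing over $\ell$ gives
\begin{equation*}
\|g\|_{\oplus^N \mathcal{H}^a}^2 = \frac{\|f\|_{\oplus^N \mathcal{H}}^2}{\|k_a\|_{\mathcal{H}}^2} \leq C^2,
\end{equation*}
which is the required bound.

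I do not expect any genuine obstacle here: the whole argument is just the observation that dividing a Bezout-type identity by the reproducing kernel is an isometry (up to the normalizing constant $\sqrt{k_a(a)}$) from $\mathcal{H}$ into $\mathcal{H}^a$. The only thing one has to be careful about is that $g_\ell$ is genuinely a function on $\Omega$ (which uses nonvanishing of $k_a$) and that the computation of the $\mathcal{H}^a$-norm is done through Lemma \ref{weighted space} rather than trying to interpret $g_\ell$ directly in terms of the rescaled kernel.
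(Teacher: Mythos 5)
Your proposal is correct and follows essentially the same route as the paper: divide the Bezout identity $\varphi\cdot f=k_{a}$ through by the reproducing kernel and use the isometry $\left\Vert g\right\Vert _{\mathcal{H}^{a}}=\left\Vert \widetilde{k_{a}}g\right\Vert _{\mathcal{H}}$ from Lemma \ref{weighted space}. The only cosmetic difference is that the paper first normalizes so that $\varphi\cdot f=\widetilde{k_{a}}$ with $\left\Vert f\right\Vert _{\oplus ^{N}\mathcal{H}}^{2}\leq C^{2}$ and then divides by $\widetilde{k_{a}}$, whereas you divide by $k_{a}$ and carry the constant $\sqrt{k_{a}\left( a\right) }=\left\Vert k_{a}\right\Vert _{\mathcal{H}}$ through the computation; these are the same argument.
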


\begin{proof}
Given $a\in \Omega $ there is by assumption a vector $f=\left( f_{1},\ldots
,f_{N}\right) \in \oplus ^{N}\mathcal{H}$ satisfying 
\begin{eqnarray*}
\varphi \cdot f\left( z\right) &=&\varphi _{1}\left( z\right) f_{1}\left(
z\right) +\cdots +\varphi _{N}\left( z\right) f_{N}\left( z\right) =%
\widetilde{k_{a}}\left( z\right) ,\ \ \ \ \ z\in \Omega , \\
\left\Vert f\right\Vert _{\oplus ^{N}\mathcal{H}}^{2} &=&\left\Vert
f_{1}\right\Vert _{\mathcal{H}}^{2}+\cdots +\left\Vert f_{N}\right\Vert _{%
\mathcal{H}}^{2}\leq C^{2}.
\end{eqnarray*}%
Now divide both sides of the Bezout equation above by $\widetilde{k_{a}}$ to
obtain%
\begin{equation*}
\varphi _{1}\left( z\right) \frac{f_{1}\left( z\right) }{\widetilde{k_{a}}%
\left( z\right) }+\cdots +\varphi _{N}\left( z\right) \frac{f_{N}\left(
z\right) }{\widetilde{k_{a}}\left( z\right) }=1,\ \ \ \ \ z\in \Omega .
\end{equation*}%
Then with $g_{\ell }\equiv \frac{f_{\ell }}{\widetilde{k_{a}}}\in \mathcal{H}%
^{a}$ for $1\leq \ell \leq N$ (here we use the definition of membership in $%
\mathcal{H}^{a}$), we have 
\begin{equation*}
\left\Vert g_{\ell }\right\Vert _{\mathcal{H}^{a}}^{2}=\left\Vert g_{\ell }%
\widetilde{k_{a}}\right\Vert _{\mathcal{H}}^{2}=\left\Vert \frac{f_{\ell }}{%
\widetilde{k_{a}}}\widetilde{k_{a}}\right\Vert _{\mathcal{H}}^{2}=\left\Vert
f_{\ell }\right\Vert _{\mathcal{H}}^{2}\ ,
\end{equation*}%
and so%
\begin{eqnarray*}
\varphi \cdot g\left( z\right) &=&\varphi _{1}\left( z\right) g_{1}\left(
z\right) +\cdots +\varphi _{N}\left( z\right) g_{N}\left( z\right) =1,\ \ \
\ \ z\in \Omega , \\
\left\Vert g\right\Vert _{\oplus ^{N}\mathcal{H}^{a}}^{2} &=&\left\Vert
g_{1}\right\Vert _{\mathcal{H}^{a}}^{2}+\cdots +\left\Vert g_{N}\right\Vert
_{\mathcal{H}^{a}}^{2}=\left\Vert f_{1}\right\Vert _{\mathcal{H}}^{2}+\cdots
+\left\Vert f_{N}\right\Vert _{\mathcal{H}}^{2}\leq C^{2}.
\end{eqnarray*}
\end{proof}

We do not know if the $\mathcal{H}$-kernel corona condition (\ref{corcon k})
with constant $C$ implies the `full' convex Poisson condition with constant $%
C$, or even with a larger positive constant $C^{\prime }$. However, the $%
\mathcal{H}-$convex Poisson condition holds for every $\varphi \in \oplus
_{\ell =1}^{N}H^{\infty }\left( \Omega \right) $ satisfying the $\mathcal{H}$%
-baby corona condition when $\mathcal{H}=H^{2}\left( \mathbb{D}\right) $ is
the classical Hardy space in the disk. This can be proved using an
appropriate outer function in place of the reproducing kernel $\widetilde{%
k_{a}}$ used in the proof above, and is carried out in Lemma \ref{Hardy
outer} in Section 6 below. The outer function in question is actually an
invertible multiplier on $H^{2}\left( \mathbb{D}\right) $, and this leads to
the following definition.

\begin{definition}
Let $\mathcal{H}=\mathcal{H}_{k}$ be a multiplier stable Hilbert function
space on a set $\Omega $ with reproducing kernel $k$, and containing the
constant functions. We say that the kernel $k$ has the \emph{Invertible
Multiplier Property} if for every $\left( \mathbf{a,\theta }\right) \in
\Omega ^{M}\times \Sigma _{M}\left( 0\right) $, there is a normalized
invertible multiplier $\widetilde{k_{\mathbf{a,\theta }}}\in M_{\mathcal{H}}$
such that%
\begin{equation}
\left\langle f,g\right\rangle _{\mathcal{H}^{\mathbf{a,\theta }%
}}=\left\langle \widetilde{k_{\mathbf{a,\theta }}}f,\widetilde{k_{\mathbf{%
a,\theta }}}g\right\rangle _{\mathcal{H}},\ \ \ \ \ f,g\in \mathcal{H}.
\label{outer}
\end{equation}
\end{definition}

The normalized invertible multiplier $\widetilde{k_{\mathbf{a,\theta }}}$ in
(\ref{outer}) is uniquely determined up to a unimodular constant.

\begin{lemma}
If two normalized invertible multipliers $\varphi _{1},\varphi _{2}\in M_{%
\mathcal{H}}$ satisfy%
\begin{equation}
\left\langle \varphi _{1}f,\varphi _{1}g\right\rangle _{\mathcal{H}%
}=\left\langle \varphi _{2}f,\varphi _{2}g\right\rangle _{\mathcal{H}}\text{
for all }f,g\in \mathcal{H},  \label{unique mult}
\end{equation}%
then $\varphi _{1}=A\varphi _{2}$ for some unimodular constant $A$.
\end{lemma}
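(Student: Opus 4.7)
The plan is to set $h := \varphi_1/\varphi_2$, show that $M_h$ is a unitary operator on $\mathcal{H}$, and then use the reproducing kernel identity $\mathcal{M}_h^\ast k_x = \overline{h(x)} k_x$ (which was already recorded in the multiplier subsection) together with the nonvanishing of the kernel (which holds because the invertible-multiplier setting is built on top of multiplier stability) to force $h$ to be a unimodular constant.

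First I would observe that since $\varphi_1,\varphi_2$ and $1/\varphi_1,1/\varphi_2$ all lie in $M_{\mathcal{H}}$, the quotient $h = \varphi_1/\varphi_2$ is an invertible multiplier as well, with $M_h M_{1/h} = M_{1/h} M_h = I$. Moreover $M_{\varphi_2}:\mathcal{H} \to \mathcal{H}$ is a bijection (its inverse is $M_{1/\varphi_2}$), so as $f$ ranges over $\mathcal{H}$, $u = \varphi_2 f$ also ranges over all of $\mathcal{H}$. Substituting into (\ref{unique mult}) gives
\begin{equation*}
\langle h u, h v\rangle_{\mathcal{H}} = \langle u, v\rangle_{\mathcal{H}}, \qquad u,v \in \mathcal{H},
\end{equation*}
which says $M_h$ is an isometry on $\mathcal{H}$. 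Combined with $M_h M_{1/h} = I$, this gives $M_h^\ast = M_h^{-1} = M_{1/h}$, so $M_h$ is unitary, hence also $M_h M_h^\ast = I$.

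Next I would evaluate $M_h M_h^\ast$ on the reproducing kernel $k_x$. Using the identity $\mathcal{M}_h^\ast k_x = \overline{h(x)}\, k_x$ established earlier in the multiplier subsection,
\begin{equation*}
k_x = M_h M_h^\ast k_x = \overline{h(x)}\, M_h k_x = \overline{h(x)}\, h\, k_x.
\end{equation*}
Evaluating both sides at an arbitrary point $y\in\Omega$ yields $k_x(y) = \overline{h(x)}\, h(y)\, k_x(y)$. Because $\mathcal{H}$ is multiplier stable, the kernel is nonvanishing, so $k_x(y)\neq 0$ for every $y\in\Omega$, and cancelling gives
\begin{equation*}
\overline{h(x)}\, h(y) = 1, \qquad x,y\in \Omega.
\end{equation*}

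Setting $y = x$ shows $|h(x)|^2 = 1$, so $|h|\equiv 1$ on $\Omega$; then $h(y) = 1/\overline{h(x)} = h(x)$ for every $y\in\Omega$, so $h \equiv A$ for a unimodular constant $A$. Therefore $\varphi_1 = A\varphi_2$, as required. The only subtle point is ensuring we are allowed to pass freely between $u\in\mathcal{H}$ and $\varphi_2 f\in \mathcal{H}$, but this is immediate from the invertibility of the multiplier $\varphi_2$; after that the argument is a direct computation against the reproducing kernel using the nonvanishing hypothesis.
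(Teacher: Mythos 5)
Your proof is correct and follows the same overall strategy as the paper's: pass to the quotient $h=\varphi_1/\varphi_2$, use invertibility of $\varphi_2$ to convert the hypothesis into the statement that $\mathcal{M}_h$ preserves the inner product, and then test against reproducing kernels and invoke the nonvanishing of $k_x$ to extract the pointwise identity $\overline{h(x)}\,h(y)=1$. The packaging of the middle step differs slightly: the paper verifies directly that the rescaled kernel $K_z(w)=h(w)\overline{h(z)}k_z(w)$ reproduces point evaluations and then appeals to uniqueness of reproducing kernels, whereas you upgrade the isometry $\mathcal{M}_h^\ast\mathcal{M}_h=I$ to unitarity via surjectivity and apply $\mathcal{M}_h\mathcal{M}_h^\ast=I$ to $k_x$ using the eigenvalue relation $\mathcal{M}_h^\ast k_x=\overline{h(x)}k_x$ already recorded in the preliminaries. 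These are two faces of the same computation, but your route has the small advantage of making explicit that unimodularity of the constant $A$ already follows from the identity $\overline{h(x)}h(x)=1$, so the normalization hypothesis $\left\Vert\varphi_1\right\Vert_{\mathcal{H}}=\left\Vert\varphi_2\right\Vert_{\mathcal{H}}=1$ (which the paper invokes at the very end to get $\left\vert A\right\vert=1$) is not actually needed for that conclusion.
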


\begin{proof}
If equality holds in (\ref{unique mult}), then 
\begin{equation*}
\left\langle \frac{\varphi _{1}}{\varphi _{2}}f,\frac{\varphi _{1}}{\varphi
_{2}}g\right\rangle _{\mathcal{H}}=\left\langle \varphi _{1}\left( \frac{1}{%
\varphi _{2}}f\right) ,\varphi _{1}\left( \frac{1}{\varphi _{2}}g\right)
\right\rangle _{\mathcal{H}}=\left\langle \varphi _{2}\left( \frac{1}{%
\varphi _{2}}f\right) ,\varphi _{2}\left( \frac{1}{\varphi _{2}}g\right)
\right\rangle _{\mathcal{H}}=\left\langle f,g\right\rangle _{\mathcal{H}}\ ,
\end{equation*}%
and so in particular,%
\begin{equation*}
\left\langle \frac{\varphi _{1}}{\varphi _{2}}f,\frac{\varphi _{1}}{\varphi
_{2}}k_{z}\right\rangle _{\mathcal{H}}=\left\langle f,k_{z}\right\rangle _{%
\mathcal{H}}=f\left( z\right) ,\ \ \ \ \ z\in \Omega .
\end{equation*}%
Thus the kernel $K_{z}\left( w\right) \equiv \frac{\varphi _{1}\left(
w\right) }{\varphi _{2}\left( w\right) }\frac{\overline{\varphi _{1}\left(
z\right) }}{\overline{\varphi _{2}\left( z\right) }}k_{z}\left( w\right) $
satisfies%
\begin{equation*}
\left\langle f,K_{z}\right\rangle _{\mathcal{H}}=\frac{\varphi _{1}\left(
z\right) }{\varphi _{2}\left( z\right) }\left\langle f,\frac{\varphi _{1}}{%
\varphi _{2}}k_{z}\right\rangle _{\mathcal{H}}=\frac{\varphi _{1}\left(
z\right) }{\varphi _{2}\left( z\right) }\left\langle \frac{\varphi _{1}}{%
\varphi _{2}}\left( \frac{\varphi _{2}}{\varphi _{1}}f\right) ,\frac{\varphi
_{1}}{\varphi _{2}}k_{z}\right\rangle _{\mathcal{H}}=\frac{\varphi
_{1}\left( z\right) }{\varphi _{2}\left( z\right) }\left( \frac{\varphi _{2}%
}{\varphi _{1}}f\right) \left( z\right) =f\left( z\right)
\end{equation*}%
for all $z\in \Omega $. By uniqueness of kernel functions, we obtain $\frac{%
\varphi _{1}\left( w\right) }{\varphi _{2}\left( w\right) }\frac{\overline{%
\varphi _{1}\left( z\right) }}{\overline{\varphi _{2}\left( z\right) }}%
k_{z}\left( w\right) =k_{z}\left( w\right) $, hence $\frac{\varphi
_{1}\left( w\right) }{\varphi _{2}\left( w\right) }\frac{\overline{\varphi
_{1}\left( z\right) }}{\overline{\varphi _{2}\left( z\right) }}=1$, since $%
k_{z}\left( w\right) $ is nonvanishing by multiplier stability. Thus we
conclude that there is a nonzero constant $A$ such that $\frac{\varphi
_{1}\left( w\right) }{\varphi _{2}\left( w\right) }=A$ for all $w\in \Omega $%
. Finally, since both multipliers are normalized we have $1=\left\Vert
\varphi _{1}\right\Vert _{\mathcal{H}}=\left\Vert A\varphi _{2}\right\Vert _{%
\mathcal{H}}=\left\vert A\right\vert \left\Vert \varphi _{2}\right\Vert _{%
\mathcal{H}}=\left\vert A\right\vert $.
\end{proof}

The identity (\ref{outer}) is equivalent to the assertion that the
reproducing kernels $k^{\mathbf{a,\theta }}$ and $k$ of the Hilbert spaces $%
\mathcal{H}^{\mathbf{a,\theta }}$ and $\mathcal{H}$ respectively, are
rescalings of each other. Indeed, the reproducing kernel corresponding to
the inner product $\left\langle \widetilde{k_{\mathbf{a,\theta }}}f,%
\widetilde{k_{\mathbf{a,\theta }}}g\right\rangle _{\mathcal{H}}$ is $\frac{%
k_{w}\left( z\right) }{\overline{\widetilde{k_{\mathbf{a,\theta }}}\left(
w\right) }\widetilde{k_{\mathbf{a,\theta }}}\left( z\right) }$. It turns out
that the Invertible Multiplier Property is extremely rare - the $1$%
-dimensional Szeg\"{o} kernel has it, and this is essentially the only
kernel we know with this property. See Section 6 below where we show that
the Invertible Multiplier Property holds for the Hardy space on the disk,
but fails for the Hardy space on both the ball and polydisc in higher
dimensions.

We now show that if $\mathcal{H}$ satisfies the\ Invertible Multiplier
Property, then the $\mathcal{H}$-baby corona condition is sufficient for the 
$\mathcal{H}$-convex Poisson corona condition.

\begin{lemma}
\label{outer property}Let $\mathcal{H}$ be a multiplier stable Hilbert
function space on a set $\Omega $, whose kernel $k$ has the Invertible
Multiplier Property. Then a vector $\varphi \in \oplus _{\ell
=1}^{N}L^{\infty }\left( \Omega \right) $ satisfies the\emph{\ }$\mathcal{H}%
- $convex Poisson condition in Definition \ref{RKCP} with positive constant $%
C$ if $\varphi \in \oplus _{\ell =1}^{N}L^{\infty }\left( \Omega \right) $
satisfies the $\mathcal{H}$-baby corona condition (\ref{corcon}) in
Definition \ref{BCP} with constant $C$.
\end{lemma}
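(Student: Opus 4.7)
The plan is to mimic the argument of Lemma \ref{shifted kernels}, but with the single normalized kernel $\widetilde{k_a}$ replaced by the normalized invertible multiplier $\widetilde{k_{\mathbf{a,\theta}}}$ supplied by the Invertible Multiplier Property. The two ingredients that made Lemma \ref{shifted kernels} work were (i) that one could use the baby corona on a specific ``target'' function and (ii) that dividing a solution by this target function gave the correct shifted norm. Both remain available here once we have the invertible multiplier in hand.

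More precisely, fix $(\mathbf{a,\theta}) \in \Omega^M \times \Sigma_M$ and let $\widetilde{k_{\mathbf{a,\theta}}} \in M_{\mathcal{H}}$ be the normalized invertible multiplier guaranteed by the Invertible Multiplier Property, so that the identity (\ref{outer}) holds and $\frac{1}{\widetilde{k_{\mathbf{a,\theta}}}} \in M_{\mathcal{H}}$. Taking $f=g=1$ in (\ref{outer}) and using Corollary \ref{constant 1} one gets $\|\widetilde{k_{\mathbf{a,\theta}}}\|_{\mathcal{H}} = \|1\|_{\mathcal{H}^{\mathbf{a,\theta}}} = 1$, so $\widetilde{k_{\mathbf{a,\theta}}}$ lies in the unit ball of $\mathcal{H}$. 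Apply the $\mathcal{H}$-baby corona hypothesis with this choice of data $h = \widetilde{k_{\mathbf{a,\theta}}}$: there exist $f_1,\ldots,f_N \in \mathcal{H}$ with
\begin{equation*}
\varphi_1(z) f_1(z) + \cdots + \varphi_N(z) f_N(z) = \widetilde{k_{\mathbf{a,\theta}}}(z), \quad z\in \Omega,
\end{equation*}
and $\|f\|_{\oplus^N \mathcal{H}}^2 \leq C^2 \|\widetilde{k_{\mathbf{a,\theta}}}\|_{\mathcal{H}}^2 = C^2$.

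Now define $g_\ell \equiv f_\ell / \widetilde{k_{\mathbf{a,\theta}}}$ for $1\leq \ell \leq N$. Since $1/\widetilde{k_{\mathbf{a,\theta}}} \in M_{\mathcal{H}}$, each $g_\ell \in \mathcal{H}$, so $g \in \oplus^N \mathcal{H}$. Dividing the Bezout identity through by $\widetilde{k_{\mathbf{a,\theta}}}$ (which is nonvanishing on $\Omega$ by invertibility) gives $\varphi \cdot g = 1$ on $\Omega$. Finally, invoking (\ref{outer}) componentwise,
\begin{equation*}
\|g_\ell\|_{\mathcal{H}^{\mathbf{a,\theta}}}^2 = \|\widetilde{k_{\mathbf{a,\theta}}}\, g_\ell\|_{\mathcal{H}}^2 = \|f_\ell\|_{\mathcal{H}}^2,
\end{equation*}
and summing over $\ell$ yields $\|g\|_{\oplus^N \mathcal{H}^{\mathbf{a,\theta}}}^2 = \|f\|_{\oplus^N \mathcal{H}}^2 \leq C^2$, which is the $\mathcal{H}$-convex Poisson bound.

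There is essentially no obstacle once the Invertible Multiplier Property is assumed — it is exactly the abstraction of the step in Lemma \ref{shifted kernels} where $\widetilde{k_a}$ was pulled out of the norm. The only point requiring mild care is verifying that the resulting $g_\ell$ actually belong to $\mathcal{H}$ (not merely to $\mathcal{H}^{\mathbf{a,\theta}}$), which is handled by the invertibility half of the hypothesis $1/\widetilde{k_{\mathbf{a,\theta}}} \in M_{\mathcal{H}}$, and checking the normalization $\|\widetilde{k_{\mathbf{a,\theta}}}\|_{\mathcal{H}} = 1$ so that no extra constant enters the bound.
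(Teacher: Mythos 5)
Your proof is correct and is precisely the argument the paper intends: the paper's proof of this lemma consists of the single instruction to repeat the proof of Lemma \ref{shifted kernels} with $\widetilde{k_{\mathbf{a,\theta}}}$ in place of $\widetilde{k_a}$ and to invoke (\ref{outer}), which is exactly what you carry out. Your explicit verification of the normalization $\left\Vert \widetilde{k_{\mathbf{a,\theta}}}\right\Vert_{\mathcal{H}}=1$ and of the membership $g_{\ell}\in\mathcal{H}$ via $\frac{1}{\widetilde{k_{\mathbf{a,\theta}}}}\in M_{\mathcal{H}}$ fills in the details the paper leaves implicit.
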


\begin{proof}
Repeat the proof of Lemma \ref{shifted kernels} using the invertible
multiplier $\widetilde{k_{\mathbf{a,\theta }}}$ in place of $\widetilde{k_{a}%
}$ and use (\ref{outer}).
\end{proof}

\subsection{Formulation and proof}

In the case that $M_{\mathcal{H}}$ is the multiplier algebra of a Hilbert
function space $\mathcal{H}$ with a \emph{complete Pick} kernel $k$, there
is a characterization of the Corona Property for $M_{\mathcal{H}}$ in terms
of matrix-valued kernel positivity conditions involving $k$. This results in
the Toeplitz corona theorem which asserts the equivalence of the Baby Corona
Property for $\mathcal{H}$ and the Corona Property for its multiplier
algebra $M_{\mathcal{H}}$, and with the \textbf{same} constants $c,C$ - see 
\cite{BaTrVi}, \cite{AmTi} and also \cite[Theorem 8.57]{AgMc2}. Here is a
special case of the Toeplitz corona theorem as given in \cite[Theorem 8.57]%
{AgMc2}.

\begin{description}
\item[Toeplitz corona theorem] \label{ToeCorThm}Let $\mathcal{H}$ be a
Hilbert function space in a set $\Omega $ with an irreducible complete
Nevanlinna-Pick kernel. Let $C>0$ and $N\in \mathbb{N}$ and let $\varphi
=\left( \varphi _{1},\ldots,\varphi _{N}\right) \in \oplus ^{N}M_{\mathcal{H}%
}$. Then $\varphi $ satisfies the $M_{\mathcal{H}}$-corona condition (\ref%
{corcon adult}) with constant $C$ in Definition \ref{ACP} \emph{if and only
if} $\varphi $ satisfies the $\mathcal{H}$-baby corona condition (\ref%
{corcon}) with constant $C$ in Definition \ref{BCP}.
\end{description}

In this paper we will use Theorem \ref{RKCP implies Adult Toe} to obtain an
analogue of this theorem for the kernel multiplier space $K_{\mathcal{H}}$
when it is an algebra. The role of the Baby Corona Property for $\mathcal{H}$
will be played by the following property.

\begin{definition}
\label{CPC}Let $\mathcal{H}$ be a Hilbert function space with kernel $k$ on
a set $\Omega $, and let $c,C>0$. We say that the space $\mathcal{H}$ has
the \emph{Convex Poisson Property} with positive constants $c,C$ if for all
vectors $\varphi \in \oplus ^{N}K_{\mathcal{H}}$ satisfying $\left\Vert
\varphi \right\Vert _{\oplus ^{N}K_{\mathcal{H}}}\leq 1$ and (\ref{lowerphi}%
), the vector $\varphi $ satisfies the $\mathcal{H}-$convex Poisson
condition in Definition \ref{RKCP} with constant $C$.
\end{definition}

Here is our alternate Toeplitz corona theorem.

\begin{theorem}
\label{alternate}Suppose that $\mathcal{H}$ is a multiplier stable Hilbert
function space of continuous functions on $\Omega $ that contains the
constant functions, and enjoys the Montel property. Suppose further that the
space of kernel multipliers $K_{\mathcal{H}}$ is an algebra.

\begin{enumerate}
\item Then $K_{\mathcal{H}}$, with the direct sum $\oplus ^{N}K_{\mathcal{H}%
} $ normed by $\left\Vert \cdot \right\Vert _{\oplus ^{N}K_{\mathcal{H}}}$,
satisfies the Corona Property with positive constants $c,C$ \emph{if and
only if\ }$\mathcal{H}$ satisfies the Convex Poisson Property with positive
constants $c,C$.

\item Suppose in addition that $\mathcal{H}$ satisfies the Invertible
Multiplier Property and that $M_{\mathcal{H}}=K_{\mathcal{H}}$
isometrically. Equip the direct sum $\oplus ^{N}M_{\mathcal{H}}$ with the
norm $\left\Vert \cdot \right\Vert _{\oplus ^{N}M_{\mathcal{H}}}$.

\begin{enumerate}
\item Then $\mathcal{H}$ satisfies the Baby Corona Property with constants $%
c,C$ if $M_{\mathcal{H}}$ satisfies the Corona Property with the constants $%
c,C$.

\item Conversely, $M_{\mathcal{H}}$ satisfies the Corona Property with
constants $c,C\sqrt{N}$ if $\mathcal{H}$ satisfies the Baby Corona Property
with constants $c,C$.
\end{enumerate}
\end{enumerate}
\end{theorem}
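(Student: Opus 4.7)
The plan is to reduce Theorem \ref{alternate} to the abstract Bezout-solvability characterization already established in Theorem \ref{RKCP implies Adult Toe}, which absorbs the two genuinely nontrivial ingredients: the von Neumann minimax equality and the Montel-type compactness. With that in hand, Part 1 becomes a translation between the Corona language (``solve $\varphi\cdot f=h$ in $\oplus^N K_{\mathcal{H}}$'') and the convex Poisson language (``solve $\varphi\cdot g=1$ with control in every convex shifted norm $\mathcal{H}^{\mathbf{a,\theta}}$''), bridged by the algebra hypothesis on $K_{\mathcal{H}}$. Part 2 uses in addition Lemma \ref{outer property} to upgrade Baby Corona to the convex Poisson condition under the Invertible Multiplier Property, and the comparison estimate (\ref{K and M}) to pass between the $K_{\mathcal{H}}$- and $M_{\mathcal{H}}$-norms on direct sums at the cost of a factor $\sqrt{N}$.

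For Part 1, the forward direction is immediate from the easy converse built into Theorem \ref{RKCP implies Adult Toe}: given the Corona Property for $K_{\mathcal{H}}$ with constants $c,C$, apply it to $h=1$ (whose $K_{\mathcal{H}}$-norm is $1$ by the standing normalization) to produce $f\in \oplus^N K_{\mathcal{H}}$ with $\varphi \cdot f = 1$ and $\left\Vert f\right\Vert _{\oplus^N K_{\mathcal{H}}}\leq C$; the string of inequalities at the end of the proof of Theorem \ref{RKCP implies Adult Toe} shows that this single $f$ simultaneously serves as $g^{\mathbf{a,\theta}}$ for every $\left(\mathbf{a,\theta}\right)$, yielding the Convex Poisson Property with the same constants. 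For the reverse direction, the convex Poisson hypothesis feeds directly into the main direction of Theorem \ref{RKCP implies Adult Toe} and yields $f_0\in \oplus^N K_{\mathcal{H}}$ with $\varphi \cdot f_0 = 1$ and $\left\Vert f_0\right\Vert _{\oplus^N K_{\mathcal{H}}}\leq C$; the algebra hypothesis on $K_{\mathcal{H}}$ then places $f=f_0 h$ back into $\oplus^N K_{\mathcal{H}}$ for every $h\in K_{\mathcal{H}}$, with $\varphi\cdot f = h$ and norm controlled by the submultiplicative estimates on $K_{\mathcal{H}}$ recorded earlier in the preliminaries.

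For Part 2(a), apply the $M_{\mathcal{H}}$-Corona Property to $h=1\in M_{\mathcal{H}}$ to obtain $f_0\in \oplus^N M_{\mathcal{H}}$ with $\varphi \cdot f_0 = 1$ and $\left\Vert f_0\right\Vert _{\oplus^N M_{\mathcal{H}}}^{\limfunc{column}}\leq C$. For arbitrary $h\in \mathcal{H}$, the vector $f=f_0 h$ solves $\varphi \cdot f = h$, and the definition of the column operator norm gives
\[
\left\Vert f\right\Vert _{\oplus^N \mathcal{H}}=\left\Vert \mathbb{M}_{f_0}h\right\Vert _{\oplus^N \mathcal{H}}\leq \left\Vert \mathbb{M}_{f_0}\right\Vert _{\limfunc{op}}\left\Vert h\right\Vert _{\mathcal{H}}\leq C\left\Vert h\right\Vert _{\mathcal{H}},
\]
which is the required Baby Corona bound. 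For Part 2(b), Lemma \ref{outer property} (legitimate by the Invertible Multiplier Property) converts the Baby Corona Property into the $\mathcal{H}$-convex Poisson condition with constant $C$ for every admissible $\varphi$; given $\varphi\in \oplus^N M_{\mathcal{H}}$ with $\left\Vert \varphi\right\Vert _{\oplus^N M_{\mathcal{H}}}^{\limfunc{column}}\leq 1$, the first inequality of (\ref{K and M}) gives $\left\Vert \varphi\right\Vert _{\oplus^N K_{\mathcal{H}}}\leq 1$, and Theorem \ref{RKCP implies Adult Toe} produces $f_0\in \oplus^N K_{\mathcal{H}}=\oplus^N M_{\mathcal{H}}$ (the second equality by the isometric identification) with $\varphi\cdot f_0 = 1$ and $\left\Vert f_0\right\Vert _{\oplus^N K_{\mathcal{H}}}\leq C$; multiplying by $h\in M_{\mathcal{H}}$ and applying $\left\Vert \cdot\right\Vert _{\oplus^N M_{\mathcal{H}}}^{\limfunc{column}}\leq \sqrt{N}\left\Vert \cdot\right\Vert _{\oplus^N K_{\mathcal{H}}}$ from (\ref{K and M}) produces the announced constants $c,C\sqrt{N}$.

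The genuine analytic content of Theorem \ref{alternate} is packaged inside Theorem \ref{RKCP implies Adult Toe} and Lemma \ref{outer property}; what remains here is essentially bookkeeping — tracking which of the several norms on $\oplus^N K_{\mathcal{H}}$, $\oplus^N M_{\mathcal{H}}$, or $\oplus^N \mathcal{H}$ is in effect at each step, and invoking the algebra hypothesis on $K_{\mathcal{H}}$ (in Part 1) and the Invertible Multiplier Property (in Part 2(b)) at the correct moments. The subtlest point to watch is the passage from solvability of $\varphi\cdot f = 1$ to the full Corona Property for arbitrary right-hand side $h$, where one needs a submultiplicative-type inequality on $K_{\mathcal{H}}$; this is precisely what forces the factor $\sqrt{N}$ in the conclusion of Part 2(b) when one converts between the $K_{\mathcal{H}}$ and $M_{\mathcal{H}}$ direct sum norms.
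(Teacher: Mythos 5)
Your proof is correct and follows essentially the same route as the paper: reduce everything to Theorem \ref{RKCP implies Adult Toe}, use Lemma \ref{outer property} to pass from the Baby Corona Property to the convex Poisson condition in part (2b), and invoke (\ref{K and M}) for the $\sqrt{N}$ loss. The only difference is that you spell out the $h=1$ reduction and the column-norm bookkeeping that the paper dismisses with ``immediately'' and ``clearly,'' which is a faithful filling-in rather than a new argument.
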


Armed with Theorem \ref{RKCP implies Adult Toe}, Lemma \ref{outer property}
and Corollary \ref{constant 1}, it is now an easy matter to prove Theorem %
\ref{alternate}.

\begin{proof}[Proof of Theorem \protect\ref{alternate}]
The first assertion follows immediately from Theorem \ref{RKCP implies Adult
Toe} and definitions. Now we turn to the second assertion. Clearly the Baby
Corona Property with constants $c,C$ holds for $\mathcal{H}$ if the
multiplier algebra $M_{\mathcal{H}}$ satisfies the Corona Property with
constants $c,C$. Conversely, assume the Baby Corona Property with constants $%
c,C$ holds for $\mathcal{H}$ as above. We must show that the Corona Property
holds for $M_{\mathcal{H}}$ with constants $c,C\sqrt{N}$. So fix a vector $%
\varphi \in M_{\mathcal{H}}$ with $\left\Vert \varphi \right\Vert _{\oplus
_{\ell =1}^{N}M_{\mathcal{H}}}\leq 1$ and $\left\vert \varphi \left(
z\right) \right\vert \geq c>0$ for $z\in \Omega $. Then the Baby Corona
Property for $\mathcal{H}$ with constants $c,C$ implies that the vector $%
\varphi $ satisfies the $\mathcal{H}$-baby corona condition with constant $C$%
, i.e. for each $h\in \mathcal{H}$ there is $f\in \oplus ^{N}\mathcal{H}$
satisfying 
\begin{equation*}
\left\Vert f\right\Vert _{\oplus ^{N}\mathcal{H}}^{2}\leq C^{2}\left\Vert
h\right\Vert _{\mathcal{H}}^{2}\text{ and }\left( \varphi \cdot f\right)
\left( z\right) =h\left( z\right) ,\ \ \ \ \ z\in \Omega .
\end{equation*}%
It follows from Lemma \ref{outer property} that $\varphi $ satisfies the%
\emph{\ }$\mathcal{H}$-convex Poisson condition in Definition \ref{RKCP}
with constant $C$. Now apply Theorem \ref{RKCP implies Adult Toe} to
conclude that there is a vector function $f\in \oplus ^{N}K_{\mathcal{H}}$
such that 
\begin{equation*}
\left\Vert f\right\Vert _{\oplus ^{N}K_{\mathcal{H}}}\leq C\text{ and }%
\left( \varphi \cdot f\right) \left( z\right) =1,\ \ \ \ \ z\in \Omega .
\end{equation*}%
By (\ref{K and M}) we thus conclude that $\left\Vert f\right\Vert _{\oplus
^{N}M_{\mathcal{H}}}\leq C\sqrt{N}$, and this shows that $M_{\mathcal{H}}$
satisfies the Corona Property with constants $c,C\sqrt{N}$.
\end{proof}

\begin{remark}
In the event that $\mathcal{H}=H^{2}\left( \Omega \right) $ is the Hardy
space on a bounded domain with $C^{2}$ boundary in $\mathbb{C}^{n}$ (see
Section 5 for definitions), then we have%
\begin{eqnarray*}
\left\Vert \varphi \right\Vert _{\oplus _{\ell =1}^{N}M_{\mathcal{H}}}^{%
\limfunc{column}} &=&\left\Vert \mathbb{M}_{\varphi }\right\Vert _{\limfunc{%
op}}\equiv \sup_{h\neq 0}\frac{\left\Vert \mathbb{M}_{\varphi }h\right\Vert
_{\oplus ^{N}\mathcal{H}}}{\left\Vert h\right\Vert _{\mathcal{H}}}%
=\sup_{h\neq 0}\frac{\sqrt{\sum_{\alpha =1}^{N}\left\Vert \mathcal{M}%
_{\varphi _{\alpha }}h\right\Vert _{H^{2}\left( \Omega \right) }^{2}}}{%
\left\Vert h\right\Vert _{H^{2}\left( \Omega \right) }} \\
&=&\sup_{h\neq 0}\frac{\sqrt{\int_{\partial \Omega }\left( \sum_{\alpha
=1}^{N}\left\vert \varphi _{\alpha }\right\vert ^{2}\right) \left\vert
h\right\vert ^{2}d\sigma }}{\sqrt{\int_{\partial \Omega }\left\vert
h\right\vert ^{2}d\sigma }}=\left\Vert \sum_{\alpha =1}^{N}\left\vert
\varphi _{\alpha }\right\vert ^{2}\right\Vert _{L^{\infty }\left( \partial
\Omega \right) }=\left\Vert \varphi \right\Vert _{L^{\infty }\left( \ell
_{N}^{2}\right) } \\
&=&\sup_{a\in \Omega }\frac{\sqrt{\int_{\partial \Omega }\left( \sum_{\alpha
=1}^{N}\left\vert \varphi _{\alpha }\right\vert ^{2}\right) \left\vert 
\widetilde{k_{a}}\right\vert ^{2}d\sigma }}{\sqrt{\int_{\partial \Omega
}\left\vert \widetilde{k_{a}}\right\vert ^{2}d\sigma }}=\sup_{a\in \Omega }%
\sqrt{\int_{\partial \Omega }\left( \sum_{\alpha =1}^{N}\left\vert \varphi
_{\alpha }\right\vert ^{2}\right) \left\vert \widetilde{k_{a}}\right\vert
^{2}d\sigma }\leq \left\Vert \varphi \right\Vert _{\oplus _{\ell =1}^{N}K_{%
\mathcal{H}}}
\end{eqnarray*}%
and so $\left\Vert \varphi \right\Vert _{\oplus _{\ell =1}^{N}M_{\mathcal{H}%
}}^{\limfunc{column}}=\left\Vert \varphi \right\Vert _{\oplus _{\ell
=1}^{N}K_{\mathcal{H}}}=\left\Vert \varphi \right\Vert _{L^{\infty }\left(
\ell _{N}^{2}\right) }$. Thus in the case $\mathcal{H}=H^{2}\left( \Omega
\right) $, we can replace the constant $C\sqrt{N}$ in part (2) of Theorem %
\ref{alternate} with the constant $C$, which shows that the Corona Property
for $M_{\mathcal{H}}$ holds with the \textbf{same} constants $c,C$ for which
the Baby Corona Property holds for $\mathcal{H}$.
\end{remark}

\section{The alternate Toeplitz corona theorem for Bergman and Hardy spaces
in $\mathbb{C}^{n}$}

Let $\Omega $ be a bounded domain in $\mathbb{C}^{n}$. The Bergman space $%
A^{2}\left( \Omega \right) $ consists of those $f\in H\left( \Omega \right) $
such that%
\begin{equation*}
\int_{\Omega }\left\vert f\right\vert ^{2}d\widetilde{V}<\infty ,
\end{equation*}%
where $d\widetilde{V}=\frac{1}{\left\vert \Omega \right\vert }dV$ and $dV$
is Lebesgue measure on $\Omega $. We then have that $\left\Vert f\right\Vert
_{A^{2}\left( \Omega \right) }\equiv \sqrt{\int_{\Omega }\left\vert
f\right\vert ^{2}d\widetilde{V}}$ defines a norm on $A^{2}\left( \Omega
\right) $. Now point evaluations are bounded, hence continuous, on $%
A^{2}\left( \Omega \right) $ by the mean value property,%
\begin{eqnarray*}
\left\vert f\left( z\right) \right\vert &=&\left\vert \frac{1}{\left\vert
B\left( z,d_{\partial \Omega }\left( z\right) \right) \right\vert }%
\int_{B\left( z,d_{\partial \Omega }\left( z\right) \right) }f\left(
w\right) dV\left( w\right) \right\vert \\
&\leq &\frac{1}{c_{n}d_{\partial \Omega }\left( z\right) ^{n}}\left(
\int_{\Omega }\left\vert f\left( w\right) \right\vert ^{2}dV\left( w\right)
\right) ^{\frac{1}{2}}=c_{n}d_{\partial \Omega }\left( z\right)
^{-n}\left\Vert f\right\Vert _{A^{2}\left( \Omega \right) }.
\end{eqnarray*}%
Thus the Bergman space $A^{2}\left( \Omega \right) $ is a reproducing kernel
Hilbert space, and we denote by $k_{z}\left( w\right) $ the reproducing
kernel for $A^{2}\left( \Omega \right) $ with respect to the inner product%
\begin{equation*}
\left\langle f,g\right\rangle _{A^{2}\left( \Omega \right) }\equiv
\int_{\Omega }f\overline{g}d\widetilde{V},\ \ \ \ \ f,g\in A^{2}\left(
\Omega \right) .
\end{equation*}%
We then have%
\begin{equation*}
f\left( z\right) =\left\langle f,k_{z}\right\rangle _{A^{2}\left( \Omega
\right) }=\int_{\Omega }f\left( w\right) \overline{k_{z}\left( w\right) }d%
\widetilde{V}\left( w\right) ,\ \ \ \ \ z\in \Omega ,\ f\in A^{2}\left(
\Omega \right) .
\end{equation*}%
Note also that $M_{A^{2}\left( \Omega \right) }=H^{\infty }\left( \Omega
\right) $, the algebra of bounded analytic functions on $\Omega $, and that
in fact with $\mathcal{H}=A^{2}\left( \Omega \right) $, we have $M_{\mathcal{%
H}}=K_{\mathcal{H}}=\mathcal{H}^{\infty }=H^{\infty }\left( \Omega \right) $.

\begin{remark}
If $\Omega $ is strictly pseudoconvex, then by Theorem 2 in C. Fefferman 
\cite{Fef} (see also Boutet de Monvel and Sj\"{o}strand \cite{BdMSj}) we
have that $k_{a}$ is bounded for each $a\in \Omega $, and moreover that $%
\left\Vert k_{a}-k_{b}\right\Vert _{\infty }=\sup_{z\in \Omega }\left\vert
k_{a}\left( z\right) -k_{b}\left( z\right) \right\vert $ tends to $0$ as $%
b\rightarrow a$ for each $a\in \Omega $. Thus the map $a\rightarrow k_{a}$
is a \emph{continuous} map from $\Omega $ to the multiplier algebra $%
H^{\infty }\left( \Omega \right) =M_{A^{2}\left( \Omega \right) }$. While
the Bergman kernel functions are also nonvanishing in the ball, H. Boas has
shown that in a generic sense, the Bergman kernels of strictly pseudoconvex
domains have zeroes (\cite{Boas}; see also Skwarczynski \cite[Skw]{Skw}, and
Boas \cite{Boas2} for a nice survey on Lu Qi-Keng's problem), and in a paper
with Fu and Straube \cite{BFS}, they constructed specific examples of such
domains, including even some strictly convex smooth Reinhardt domains.
\end{remark}

Now we recall the definition of the Hardy spaces $H^{2}\left( \Omega \right) 
$ for a bounded domain $\Omega $ in $\mathbb{C}^{n}$ with $C^{2}$ boundary.
The Hardy space $H^{2}\left( \Omega \right) $ consists of those $f\in
H\left( \Omega \right) $ such that%
\begin{equation*}
\sup_{\varepsilon >0}\int_{\partial \Omega _{\varepsilon }}\left\vert
f\right\vert ^{2}d\sigma _{\varepsilon }<\infty ,
\end{equation*}%
where $\Omega _{\varepsilon }\equiv \left\{ z\in \Omega :\rho \left(
z\right) <-\varepsilon \right\} $ and $\rho $ is an appropriate defining
function for $\Omega $, and where $\sigma _{\varepsilon }$ is surface
measure on $\partial \Omega _{\varepsilon }$. We then have that%
\begin{equation*}
\left\Vert f\right\Vert _{H^{2}\left( \Omega \right) }\equiv \sqrt{%
\int_{\partial \Omega }\left\vert f^{\ast }\right\vert ^{2}d\sigma }
\end{equation*}%
defines a norm on $H^{2}\left( \Omega \right) $, where $\sigma $ is surface
measure on $\partial \Omega $, and where the nontangential boundary limits $%
f^{\ast }$ exist a.e. $\left[ \sigma \right] $ on $\partial \Omega $. We
also note that 
\begin{equation*}
\int_{\Omega }\left\vert f\right\vert ^{2}dA\leq C\sup_{\varepsilon
>0}\int_{\partial \Omega _{\varepsilon }}\left\vert f\right\vert ^{2}d\sigma
_{\varepsilon }<\infty
\end{equation*}%
for some constant $C$ depending only on $\Omega $, and this shows that the
Bergman space norm $\left\Vert f\right\Vert _{A^{2}\left( \Omega \right)
}\equiv \sqrt{\int_{\Omega }\left\vert f\right\vert ^{2}dA}$ is dominated by
a multiple of the Hardy space norm $\left\Vert f\right\Vert _{H^{2}\left(
\Omega \right) }$:%
\begin{equation*}
\left\Vert f\right\Vert _{A^{2}\left( \Omega \right) }\leq C_{\Omega
}\left\Vert f\right\Vert _{H^{2}\left( \Omega \right) }.
\end{equation*}%
Since point evaluations are continuous on $A^{2}\left( \Omega \right) $, it
follows that they are also continuous on $H^{2}\left( \Omega \right) $. Thus
the Hardy space $H^{2}\left( \Omega \right) $ is a reproducing kernel
Hilbert space, and we denote by $k_{z}\left( w\right) $ the reproducing
kernel for $H^{2}\left( \Omega \right) $ with respect to the inner product%
\begin{equation*}
\left\langle f,g\right\rangle _{H^{2}\left( \Omega \right) }\equiv
\int_{\partial \Omega }f^{\ast }\overline{g^{\ast }}d\sigma ,\ \ \ \ \
f,g\in H^{2}\left( \Omega \right) .
\end{equation*}%
We then have%
\begin{equation*}
f\left( z\right) =\left\langle f,k_{z}\right\rangle _{H^{2}\left( \Omega
\right) }=\int_{\partial \Omega }f^{\ast }\left( w\right) \overline{%
k_{z}^{\ast }\left( w\right) }d\sigma \left( w\right) ,\ \ \ \ \ z\in \Omega
,f\in H^{2}\left( \Omega \right) .
\end{equation*}%
We will typically suppress the star in the superscript and simply write $%
\left\langle f,g\right\rangle _{H^{2}\left( \Omega \right) }\equiv
\int_{\partial \Omega }f\overline{g}d\sigma $ and $\left\langle
f,k_{z}\right\rangle _{H^{2}\left( \Omega \right) }=\int_{\partial \Omega }f%
\overline{k_{z}}d\sigma $. In the case of the Hardy space on the polydisc $%
\mathbb{D}^{n}$, where $\partial \mathbb{D}^{n}$ is not $C^{2}$, integration
is restricted to the distinguished boundary $\mathbb{T}^{n}$, a subset of
measure zero of $\partial \Omega $.

The advantage of the Hardy space over the Bergman space is the equality of
vector norms $\left\Vert \varphi \right\Vert _{\oplus ^{N}H^{2}\left( \Omega
\right) }^{\limfunc{column}}=\left\Vert \varphi \right\Vert _{\oplus
^{N}H^{2}\left( \Omega \right) }^{\func{row}}=\left\Vert \varphi \right\Vert
_{L^{2}\left( \ell _{N}^{2}\right) }$, while the advantage of the Bergman
space over the Hardy space is the greater generality of the domains $\Omega $
for which it is defined.

\subsection{The Bergman and Hardy space on the unit ball}

First note that the reproducing kernels $k_{a}\left( w\right) =$ $\frac{1}{%
\left( 1-\overline{a}\cdot w\right) ^{n+1}}$ for the standard inner product
on $A^{2}\left( \mathbb{B}_{n}\right) $ are invertible multipliers of $%
A^{2}\left( \mathbb{B}_{n}\right) $ (with possibly large multiplier norm $%
\left\Vert k_{a}\right\Vert _{\infty }=\frac{1}{\left( 1-\left\vert
a\right\vert \right) ^{n+1}}$, but this norm plays no role here) depending
continuously on $a\in \mathbb{B}_{n}$, that $M_{A^{2}\left( \mathbb{B}%
_{n}\right) }=H^{\infty }\left( \mathbb{B}_{n}\right) $, that $A^{2}\left( 
\mathbb{B}_{n}\right) $ contains the constants, and finally that $%
A^{2}\left( \mathbb{B}_{n}\right) $ satisfies the Montel property precisely
by Montel's theorem for holomorphic functions. Similar considerations apply
to the Hardy space $H^{2}\left( \mathbb{B}_{n}\right) $. From part (2) of
Theorem \ref{alternate} we obtain the following Corollary.

\begin{corollary}
The Corona Property holds for $H^{\infty }\left( \mathbb{B}_{n}\right) $ if
and only if $A^{2}\left( \mathbb{B}_{n}\right) $ satisfies the Convex
Poisson Property if and only if $H^{2}\left( \mathbb{B}_{n}\right) $
satisfies the Convex Poisson Property.
\end{corollary}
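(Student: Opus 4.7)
The plan is to invoke Theorem \ref{alternate}(1) separately for the Hilbert function spaces $\mathcal{H}=A^{2}(\mathbb{B}_{n})$ and $\mathcal{H}=H^{2}(\mathbb{B}_{n})$, and then to use the coincidence $K_{\mathcal{H}}=M_{\mathcal{H}}=H^{\infty}(\mathbb{B}_{n})$ in each case to stitch the two equivalences together into a three-way equivalence.

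First I would verify the four hypotheses of Theorem \ref{alternate} for $A^{2}(\mathbb{B}_{n})$. The reproducing kernels $k_{a}(w)=(1-\overline{a}\cdot w)^{-(n+1)}$ lie in $H^{\infty}(\mathbb{B}_{n})=M_{A^{2}(\mathbb{B}_{n})}$ and have polynomial reciprocals, so they are invertible multipliers; the map $a\mapsto k_{a}$ is continuous (hence lower semicontinuous) into $H^{\infty}(\mathbb{B}_{n})$, as already recalled in the remark preceding the corollary; the constant function $1$ belongs to $A^{2}(\mathbb{B}_{n})$; and the Montel property is just Montel's theorem for holomorphic functions, applied with dense subset $S=\mathbb{B}_{n}$ (pointwise boundedness of a norm-bounded family in $A^{2}(\mathbb{B}_{n})$ on compacta follows from the mean value inequality used at the start of Section 5). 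Finally $K_{A^{2}(\mathbb{B}_{n})}=H^{\infty}(\mathbb{B}_{n})$ is trivially an algebra. The identical verification, now with the Szeg\H{o} kernels $k_{a}(w)=(1-\overline{a}\cdot w)^{-n}$, handles $\mathcal{H}=H^{2}(\mathbb{B}_{n})$.

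Applying Theorem \ref{alternate}(1) to each space yields both equivalences: $K_{A^{2}(\mathbb{B}_{n})}$ has the Corona Property if and only if $A^{2}(\mathbb{B}_{n})$ has the Convex Poisson Property, and $K_{H^{2}(\mathbb{B}_{n})}$ has the Corona Property if and only if $H^{2}(\mathbb{B}_{n})$ has the Convex Poisson Property. Both $K_{A^{2}(\mathbb{B}_{n})}$ and $K_{H^{2}(\mathbb{B}_{n})}$ coincide as algebras with $H^{\infty}(\mathbb{B}_{n})$, and the $\oplus^{N}K_{\mathcal{H}}$-norm is equivalent to (and in the Hardy case, by the Remark following Theorem \ref{alternate}, equal to) the $L^{\infty}(\ell_{N}^{2})$-norm used in the standard formulation of the Corona Property for $H^{\infty}(\mathbb{B}_{n})$. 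Hence each $K$-Corona Property is equivalent to the Corona Property for $H^{\infty}(\mathbb{B}_{n})$, and the three-way equivalence follows. There is no serious obstacle here — the corollary is essentially a direct application of the general machinery of Section 4 to a pair of very concrete Hilbert function spaces; the only point requiring care is the bookkeeping for the various equivalent direct-sum norms on $\oplus^{N}H^{\infty}(\mathbb{B}_{n})$, handled uniformly by (\ref{K and M}) at the cost of at most a $\sqrt{N}$ factor in the Corona constant.
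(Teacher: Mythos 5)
Your proposal is correct and takes essentially the same route as the paper: one verifies multiplier stability (the kernels $k_{a}$ and their reciprocals lie in $H^{\infty }\left( \mathbb{B}_{n}\right) =M_{\mathcal{H}}$ and $a\mapsto k_{a}$ is continuous), the presence of constants, and the Montel property for both $A^{2}\left( \mathbb{B}_{n}\right) $ and $H^{2}\left( \mathbb{B}_{n}\right) $, identifies $K_{\mathcal{H}}=M_{\mathcal{H}}=H^{\infty }\left( \mathbb{B}_{n}\right) $ in each case, and chains the two resulting equivalences, with the norm bookkeeping handled exactly as you describe. If anything, your appeal to part (1) of Theorem \ref{alternate} is the logically correct citation: the paper nominally invokes part (2), which cannot apply here since it presupposes the Invertible Multiplier Property, shown to fail for the ball when $n>1$ in Lemma \ref{ball failure}.
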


\subsection{The Hardy and Bergman space on the unit polydisc}

For the polydisc, we will again apply our alternate Toeplitz corona theorem
to the Hardy space $H^{2}\left( \mathbb{D}^{n}\right) $ and the Bergman
space $A^{2}\left( \mathbb{D}^{n}\right) $. First, we note that the
multiplier algebra of $H^{2}\left( \mathbb{D}^{n}\right) $ is $H^{\infty
}\left( \mathbb{D}^{n}\right) $, and that the reproducing kernels $%
k_{a}=\prod_{j=1}^{n}\frac{1}{(1-\overline{a}_{j}z_{j})}$ for the standard
inner product on $H^{2}\left( \mathbb{D}^{n}\right) $ are invertible
multipliers of $H^{2}\left( \mathbb{D}^{n}\right) $ and that the map $%
a\rightarrow k_{a}$ is continuous from $\mathbb{D}^{n}$ to $H^{\infty
}\left( \mathbb{D}^{n}\right) $. Similar considerations apply to the Bergman
space $A^{2}\left( \mathbb{D}^{n}\right) $.

\begin{corollary}
The corona theorem holds for $H^{\infty }\left( \mathbb{D}^{n}\right) $ if
and only if $H^{2}\left( \mathbb{D}^{n}\right) $ satisfies the Convex
Poisson Property if and only if $A^{2}\left( \mathbb{D}^{n}\right) $
satisfies the Convex Poisson Property.
\end{corollary}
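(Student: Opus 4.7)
The plan is to invoke Theorem \ref{alternate}(1) twice, once with $\mathcal{H}=H^{2}(\mathbb{D}^{n})$ and once with $\mathcal{H}=A^{2}(\mathbb{D}^{n})$, and then identify the Corona Property for each kernel multiplier algebra $K_{\mathcal{H}}$ with the standard Corona Property for $H^{\infty}(\mathbb{D}^{n})$. So the work comes down to verifying the four hypotheses of Theorem \ref{alternate} for each space, and reconciling the norms.

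First I would verify multiplier stability for both spaces. For the Hardy space the reproducing kernel $k_{a}(z)=\prod_{j=1}^{n}(1-\overline{a}_{j}z_{j})^{-1}$ is a bounded nonvanishing holomorphic function on $\mathbb{D}^{n}$ for each fixed $a\in\mathbb{D}^{n}$, and $1/k_{a}$ is simply a polynomial in $z$, hence also bounded. Therefore both lie in $H^{\infty}(\mathbb{D}^{n})=M_{H^{2}(\mathbb{D}^{n})}$. A direct expansion shows that $a\mapsto k_{a}$ is continuous (not merely lower semicontinuous) from $\mathbb{D}^{n}$ into $H^{\infty}(\mathbb{D}^{n})$. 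The same calculation handles the Bergman kernel $\prod_{j=1}^{n}(1-\overline{a}_{j}z_{j})^{-2}$. The other three hypotheses are routine: the constant function $1$ lies in both spaces; $K_{\mathcal{H}}$ is an algebra because $M_{\mathcal{H}}=\mathcal{H}^{\infty}=H^{\infty}(\mathbb{D}^{n})$ sandwiches $K_{\mathcal{H}}$ between equal Banach algebras, forcing $K_{\mathcal{H}}=M_{\mathcal{H}}=H^{\infty}(\mathbb{D}^{n})$ as sets; and the Montel property follows from the classical Montel theorem once one takes any countable dense subset $S\subset\mathbb{D}^{n}$, since an $L^{2}$ (resp. $H^{2}$) bounded family of holomorphic functions is uniformly bounded on compact subsets of $\mathbb{D}^{n}$ (by the mean value property in the Bergman case, by integration against a product Poisson kernel in the Hardy case), and so admits a subsequence converging uniformly on compacta, hence pointwise on $S$, with the limit in the closed unit ball by Fatou's lemma.

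With the four hypotheses in place, Theorem \ref{alternate}(1) yields directly that the Corona Property for $K_{H^{2}(\mathbb{D}^{n})}$ (normed by $\|\cdot\|_{\oplus^{N}K_{\mathcal{H}}}$) is equivalent to the Convex Poisson Property for $H^{2}(\mathbb{D}^{n})$, and similarly for $A^{2}(\mathbb{D}^{n})$. The last step is to identify these two Corona Properties with the Corona Property for $H^{\infty}(\mathbb{D}^{n})$ under its sup norm. Since $K_{\mathcal{H}}=M_{\mathcal{H}}=H^{\infty}(\mathbb{D}^{n})$ as sets in each case, the chain of inequalities
\begin{equation*}
\|\varphi\|_{\oplus^{N}K_{\mathcal{H}}}\;\leq\;\|\varphi\|_{\oplus^{N}M_{\mathcal{H}}}^{\operatorname{column}}\;\leq\;\sqrt{N}\,\|\varphi\|_{\oplus^{N}M_{\mathcal{H}}}^{\max}\;\leq\;\sqrt{N}\,\|\varphi\|_{\oplus^{N}K_{\mathcal{H}}}
\end{equation*}
from (\ref{K and M}) combined with $\|\varphi_{\ell}\|_{M_{\mathcal{H}}}=\|\varphi_{\ell}\|_{\infty}$ (a standard fact for Hardy and Bergman spaces on the polydisc) shows that all three vector norms are equivalent, with constants depending only on $N$. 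Consequently the existence of a corona solution with constants $(c,C)$ in one formulation translates to the existence of a solution with constants $(c,C')$ in another; the Corona Property itself does not depend on which of the three equivalent norms we use.

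The main potential obstacle is not analytic but notational: one must keep careful track of the three norms on $\oplus^{N}H^{\infty}(\mathbb{D}^{n})$ (the $K$-norm, the column $M$-norm, and the $\max$-norm), and verify that the Corona Property is stable under passage between them. All four hypotheses of Theorem \ref{alternate} are essentially textbook facts about Hardy and Bergman spaces on the polydisc, so the conclusion follows with no additional complex-analytic input beyond Montel's theorem.
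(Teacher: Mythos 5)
Your proposal is correct and follows essentially the same route as the paper: verify the hypotheses of Theorem \ref{alternate} (multiplier stability of the product kernels, $K_{\mathcal{H}}=M_{\mathcal{H}}=\mathcal{H}^{\infty}=H^{\infty}(\mathbb{D}^{n})$, constants, Montel) for both $H^{2}(\mathbb{D}^{n})$ and $A^{2}(\mathbb{D}^{n})$ and then invoke the equivalence of the Corona Property for $K_{\mathcal{H}}$ with the Convex Poisson Property for $\mathcal{H}$. You are in fact somewhat more explicit than the paper about reconciling the three norms on $\oplus^{N}H^{\infty}(\mathbb{D}^{n})$ via (\ref{K and M}), which the paper leaves implicit.
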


\subsection{General domains}

From part (2) of Theorem \ref{alternate}, we obtain the corona theorem for $%
H^{\infty }\left( \Omega \right) $ for any bounded domain $\Omega $ in $%
\mathbb{C}^{n}$ for which the Convex Poisson Property holds for the Bergman
space $A^{2}\left( \Omega \right) $, and for which the Bergman space $%
A^{2}\left( \Omega \right) $ is multiplier stable (note that $K_{A^{2}\left(
\Omega \right) }=H^{\infty }\left( \Omega \right) $, contains constants and
satisfies the Montel property).

Greene and Krantz \cite{GrKr} show that for domains $\Omega $ in a
sufficiently small $C^{\infty }$ neighbourhood of the unit ball $\mathbb{B}%
_{n}$ (or any other strictly pseudoconvex domain $D$ with smooth boundary
for which $\inf_{z,w\in D}k_{D}\left( z,w\right) >0$), we have the lower
bound $\inf_{z,w\in \Omega }k_{\Omega }\left( z,w\right) >0$.

\begin{corollary}
Let\thinspace $D$ be a strictly pseudoconvex domain with smooth boundary for
which $\inf_{z,w\in D}k_{D}\left( z,w\right) >0$, where $k_{D}$ is the
Bergman kernel for $D$. Then for $\Omega $ in a sufficiently small $%
C^{\infty }$ neighbourhood of $D$, the corona theorem holds for $H^{\infty
}\left( \Omega \right) $ if and only if the Convex Poisson Property holds
for $A^{2}\left( \Omega \right) $.
\end{corollary}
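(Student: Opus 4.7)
The plan is to deduce the corollary from Theorem \ref{alternate}(1) applied to $\mathcal{H}=A^{2}(\Omega)$. First I would record that for a bounded domain $\Omega \subset \mathbb{C}^{n}$ one has $M_{A^{2}(\Omega)}=H^{\infty}(\Omega)$, and moreover the embedding chain $M_{\mathcal{H}} \hookrightarrow K_{\mathcal{H}} \hookrightarrow \mathcal{H}^{\infty}$ collapses so that $K_{A^{2}(\Omega)}=H^{\infty}(\Omega)$. Consequently, the Corona Property for $K_{A^{2}(\Omega)}$ in the sense of Definition \ref{ACP} is precisely the classical corona theorem for $H^{\infty}(\Omega)$, and part (1) of Theorem \ref{alternate} will deliver the desired biconditional as soon as its four hypotheses are verified.

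The hypotheses split into three routine items and one substantive one. The constant function $1$ lies in $A^{2}(\Omega)$ because $\Omega$ is bounded; $K_{A^{2}(\Omega)}=H^{\infty}(\Omega)$ is an algebra tautologically; and $A^{2}(\Omega)$ has the Montel property of Definition \ref{def Montel}, since a norm-bounded sequence in $A^{2}(\Omega)$ is locally uniformly bounded (by the mean value inequality already invoked in the excerpt) and hence Montel's classical theorem supplies a subsequence converging locally uniformly on $\Omega$, in particular pointwise on a countable dense set, with limit in the unit ball of $A^{2}(\Omega)$ by Fatou.

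The substantive step, and the step in which the hypotheses on $D$ and $\Omega$ are actually used, is multiplier stability of $A^{2}(\Omega)$. This is also the step I expect to require the most care. Since $\Omega$ is sufficiently $C^{\infty}$-close to the strictly pseudoconvex smooth domain $D$, it is itself strictly pseudoconvex with smooth boundary, and the Greene--Krantz theorem quoted in the excerpt transfers the positive infimum: $\inf_{z,w\in\Omega}k_{\Omega}(z,w)>0$. This immediately gives nonvanishing of every kernel $k_{a}$ together with the pointwise lower bound $|k_{a}(z)|\geq c_{0}>0$, hence $1/k_{a}\in H^{\infty}(\Omega)=M_{A^{2}(\Omega)}$. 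Combined with the Fefferman and Boutet de Monvel--Sj\"ostrand boundary asymptotics recalled in the remark above, which yield $\|k_{a}\|_{\infty}<\infty$ and in fact $\|k_{a}-k_{b}\|_{\infty}\to 0$ as $b\to a$, I conclude that $k_{a}\in M_{A^{2}(\Omega)}$ is an invertible multiplier and that $a\mapsto k_{a}$ is continuous, a fortiori lower semicontinuous, from $\Omega$ to $M_{A^{2}(\Omega)}$. This establishes multiplier stability.

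With all four hypotheses in hand, Theorem \ref{alternate}(1) applies and yields the biconditional: $H^{\infty}(\Omega)=K_{A^{2}(\Omega)}$ enjoys the Corona Property if and only if $A^{2}(\Omega)$ enjoys the Convex Poisson Property. The only non-mechanical ingredient is the invocation of Greene--Krantz, which is precisely what forces the smallness-of-perturbation hypothesis in the statement; once that lower bound is in place, the remaining verifications are standard consequences of strict pseudoconvexity and smoothness of $\partial\Omega$.
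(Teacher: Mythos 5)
Your proposal is correct and follows essentially the same route as the paper: the paper's own justification is exactly the paragraph preceding the corollary, namely that $K_{A^{2}(\Omega)}=M_{A^{2}(\Omega)}=H^{\infty}(\Omega)$ contains the constants and has the Montel property, that Greene--Krantz transfers the lower kernel bound to $\Omega$ while Fefferman / Boutet de Monvel--Sj\"ostrand give boundedness and continuity of $a\mapsto k_{a}$ (hence multiplier stability), and then Theorem \ref{alternate} applies. If anything, your citation of part (1) of Theorem \ref{alternate} is the more accurate one for the stated biconditional, since part (2) presupposes the Invertible Multiplier Property, which is not available here.
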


If $f\in \limfunc{Aut}\left( \Omega \right) $, then we have the Bergman
kernel transformation law (see e.g. \cite{Boas2})%
\begin{equation*}
K\left( z,w\right) =\det f^{\prime }\left( z\right) K\left( f\left( z\right)
,f\left( w\right) \right) \overline{\det f^{\prime }\left( w\right) },\ \ \
\ \ z,w\in \Omega .
\end{equation*}%
If $\Omega $ is a complete circular domain, then $K\left( z,0\right) $ is a
nonzero constant $K_{0}$ (see Bell \cite{Bell}; see also \cite{Boas2}). Now
assume in addition that $\Omega $ is homogeneous and fix $\zeta \in \Omega $%
. Then with $f\in \limfunc{Aut}\left( \Omega \right) $ such that $f\left(
\zeta \right) =0$, the transformation law shows that%
\begin{equation*}
K\left( z,\zeta \right) =\det f^{\prime }\left( z\right) K\left( f\left(
z\right) ,f\left( \zeta \right) \right) \overline{\det f^{\prime }\left(
\zeta \right) }=\det f^{\prime }\left( z\right) K_{0}\overline{\det
f^{\prime }\left( \zeta \right) }.
\end{equation*}%
If $g=f^{-1}$ we have $\det f^{\prime }\left( z\right) =\frac{1}{\det
g^{\prime }\left( z\right) }$, and from another application of the
transformation law with $\eta =f\left( 0\right) $,%
\begin{equation*}
\overline{K\left( z,\eta \right) }=K\left( \eta ,z\right) =\det g^{\prime
}\left( \eta \right) K\left( g\left( \eta \right) ,g\left( z\right) \right) 
\overline{\det g^{\prime }\left( z\right) }=\det g^{\prime }\left( \eta
\right) \overline{K_{0}}\overline{\det g^{\prime }\left( z\right) },
\end{equation*}%
we see that%
\begin{eqnarray*}
K\left( z,\zeta \right) &=&\det f^{\prime }\left( z\right) K_{0}\overline{%
\det f^{\prime }\left( \zeta \right) }=\frac{1}{\det g^{\prime }\left(
z\right) }K_{0}\overline{\det f^{\prime }\left( \zeta \right) }=\frac{%
\overline{\det g^{\prime }\left( \eta \right) }K_{0}}{K\left( z,\eta \right) 
}K_{0}\overline{\det f^{\prime }\left( \zeta \right) } \\
&=&K_{0}^{2}\frac{\overline{\det g^{\prime }\left( \eta \right) }\overline{%
\det f^{\prime }\left( \zeta \right) }}{K\left( z,\eta \right) },
\end{eqnarray*}%
and hence that%
\begin{equation*}
\inf_{z\in \Omega }\left\vert K\left( z,\zeta \right) \right\vert \geq
\left\vert K_{0}\right\vert ^{2}\frac{\left\vert \det g^{\prime }\left( \eta
\right) \det f^{\prime }\left( \zeta \right) \right\vert }{\left\Vert
K_{\eta }\right\Vert _{\infty }}>0
\end{equation*}%
since $\left\Vert K_{\eta }\right\Vert _{\infty }<\infty $ if $\Omega $ is a
strictly pseudoconvex domain. The continuity of the map $\zeta \rightarrow
K\left( z,\zeta \right) $ also follows from these calculations. This gives
the following corollary to Theorem \ref{alternate}.

\begin{corollary}
Let\ $\Omega $ be a bounded strictly pseudoconvex homogeneous complete
circular domain with smooth boundary in $\mathbb{C}^{n}$. Then the corona
theorem holds for $H^{\infty }\left( \Omega \right) $ if and only if the
Convex Poisson Property holds for $A^{2}\left( \Omega \right) $.
\end{corollary}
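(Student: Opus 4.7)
The strategy is to verify that the Bergman space $\mathcal{H}=A^{2}(\Omega)$ satisfies all four hypotheses of part (1) of Theorem~\ref{alternate}, and then to note that $K_{\mathcal{H}}=M_{\mathcal{H}}=H^{\infty}(\Omega)$, so that the Corona Property for $K_{\mathcal{H}}$ is exactly the usual Corona Property for $H^{\infty}(\Omega)$. The four hypotheses to check are: multiplier stability of $A^{2}(\Omega)$, that $1\in A^{2}(\Omega)$, the Montel property for the unit ball of $A^{2}(\Omega)$, and that $K_{A^{2}(\Omega)}$ is an algebra.

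The first hypothesis is the only substantive one, and the calculations immediately preceding the corollary already do essentially all the work. Since $\Omega$ is strictly pseudoconvex with smooth boundary, Fefferman's theorem gives $K_{\zeta}\in H^{\infty}(\Omega)=M_{A^{2}(\Omega)}$ with norm $\|K_{\zeta}\|_{\infty}<\infty$, and further shows that $\zeta\mapsto K_{\zeta}$ is continuous from $\Omega$ into $H^{\infty}(\Omega)$, which yields the required lower semicontinuity. The invertibility of $K_{\zeta}$ as a multiplier is exactly the content of the transformation-law computation displayed just above the corollary: homogeneity together with the complete circular hypothesis produce the uniform lower bound
\begin{equation*}
\inf_{z\in\Omega}|K(z,\zeta)|\;\geq\;|K_{0}|^{2}\,\frac{|\det g'(\eta)\det f'(\zeta)|}{\|K_{\eta}\|_{\infty}}\;>\;0,
\end{equation*}
so that $1/K_{\zeta}\in H^{\infty}(\Omega)=M_{A^{2}(\Omega)}$ as well. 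Thus $A^{2}(\Omega)$ is multiplier stable.

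The remaining hypotheses are standard. Since $\Omega$ is bounded, the constant function $1$ lies in $A^{2}(\Omega)$, and after rescaling we may assume $\|1\|_{A^{2}(\Omega)}=1$ as in the standing assumption. The Montel property for the unit ball of $A^{2}(\Omega)$ follows from the mean value inequality $|f(z)|\leq c_{n}d_{\partial\Omega}(z)^{-n}\|f\|_{A^{2}(\Omega)}$ displayed at the start of Section~5, which gives local uniform boundedness, together with the classical Montel theorem applied along a countable dense subset $S\subset\Omega$ to extract a subsequence converging pointwise on $S$ to a limit in the unit ball. Finally, $M_{A^{2}(\Omega)}=H^{\infty}(\Omega)$ is a Banach algebra, and as noted in Section~2, $K_{A^{2}(\Omega)}=M_{A^{2}(\Omega)}=H^{\infty}(\Omega)$ because $A^{2}(\Omega)\cap L^{\infty}(\Omega)=H^{\infty}(\Omega)$ and every bounded holomorphic function is a multiplier of $A^{2}(\Omega)$; in particular $K_{A^{2}(\Omega)}$ is an algebra.

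With all four hypotheses verified, part (1) of Theorem~\ref{alternate} applies and yields the equivalence between the Corona Property for $K_{A^{2}(\Omega)}=H^{\infty}(\Omega)$ and the Convex Poisson Property for $A^{2}(\Omega)$, which is the statement of the corollary. The only step requiring genuine input beyond bookkeeping is the uniform lower bound on $|K(z,\zeta)|$ used to secure invertibility of the reproducing kernels as multipliers; the hypotheses on $\Omega$ (strictly pseudoconvex, homogeneous, complete circular) are designed precisely so that the Bergman kernel transformation law delivers this bound, so there is no further obstacle.
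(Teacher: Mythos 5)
Your proposal is correct and follows essentially the same route as the paper: the substantive step is the uniform lower bound on $\left\vert K\left( z,\zeta \right) \right\vert $ obtained from the Bergman kernel transformation law together with Bell's constancy of $K\left( z,0\right) $ on complete circular domains and homogeneity, which the paper carries out in the displayed computation immediately preceding the corollary, after which multiplier stability, the constants, the Montel property, and the identification $K_{A^{2}\left( \Omega \right) }=M_{A^{2}\left( \Omega \right) }=H^{\infty }\left( \Omega \right) $ reduce the statement to Theorem \ref{alternate}. Your citation of part (1) of Theorem \ref{alternate} is in fact the logically correct one (the paper's text nominally invokes part (2), but that part requires the Invertible Multiplier Property, which is not available here; the equivalence of the Corona Property for $K_{\mathcal{H}}$ with the Convex Poisson Property for $\mathcal{H}$ is precisely part (1)).
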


\begin{remark}
The above smoothness assumption on the boundaries of $D$ and $\Omega $ can
be relaxed, but we will not pursue that here.
\end{remark}

\begin{remark}
In all of the above corollaries, the Baby Corona Property for $H^{2}\left(
\Omega \right) $ and $A^{2}\left( \Omega \right) $ is known to hold - see 
\cite{KrLi} for strictly pseudoconvex domains as above, and see \cite{Lin}, 
\cite{Li}, \cite{Lin2}, \cite{Tren}, and \cite{TrWi2} for the polydisc.
\end{remark}

\begin{remark}
\label{other spaces}For the pseudoconvex domain constructed by Sibony \cite%
{Sib}, in which the Corona Property for $H^{\infty }\left( \Omega \right) $
fails, the Szeg\"{o} kernel functions cannot be invertible multipliers
depending in a lower semicontinuous way on the pole. Indeed, $M_{H^{2}\left(
\Omega \right) }=H^{\infty }\left( \Omega \right) $ and $H^{2}\left( \Omega
\right) $ contains constants and satisfies the Montel property. Moreover, by
a result of Andersson and Carlsson \cite{AnCa}, the baby corona theorem
holds for $H^{2}\left( \Omega \right) $ on pseudoconvex domains with smooth
boundary, and Theorem \ref{alternate} now shows that the Szeg\"{o} kernels
cannot be bounded away from both $0$ and $\infty $ and satisfy the
semicontinuity assumption on the Sibony domain.
\end{remark}

\begin{remark}
There is a partial result for certain `polydomains' in $\mathbb{C}^{n}$. The
Bergman space $A^{2}\left( \Omega \right) $ satisfies $K_{A^{2}\left( \Omega
\right) }=M_{A^{2}\left( \Omega \right) }=H^{\infty }\left( \Omega \right) $%
, and clearly contains the constants and is a Montel space. If the Bergman
spaces $A^{2}\left( \Omega _{j}\right) $, $\Omega _{j}\subset \mathbb{C}%
^{n_{j}}$, are multiplier stable for $1\leq j\leq J$, then the Bergman space 
$A^{2}\left( \Omega \right) $ for the polydomain $\Omega
=\prod\limits_{j=1}^{J}\Omega _{j}\subset \mathbb{C}^{n}$, $%
n=\sum_{j=1}^{J}n_{j}$, is also multiplier stable (since the kernel function
of $A^{2}\left( \Omega \right) $ is the product of the kernel functions for $%
A^{2}\left( \Omega _{j}\right) $, and the multiplier norm is the supremum
norm). Thus in this case $M_{A^{2}\left( \Omega \right) }$ satisfies the
Corona Property if and only if $A^{2}\left( \Omega \right) $ satisfies the
Convex Poisson Property.
\end{remark}

\section{Invertible Multiplier Property for Hardy spaces}

Here we begin by showing that the Invertible Multiplier Property holds for
the Hardy space $H^{2}\left( \mathbb{D}\right) $ on the disk.

\begin{lemma}
\label{Hardy outer}The Szeg\"{o} kernel for the Hardy space $\mathcal{H}%
=H^{2}\left( \mathbb{D}\right) $ has the Invertible Multiplier Property.
\end{lemma}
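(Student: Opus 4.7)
The plan is to reduce the Invertible Multiplier Property for $H^{2}(\mathbb{D})$ to the classical existence of outer functions with prescribed modulus on the circle. First I would unwind the definitions to identify the weighted inner product. Using Lemma \ref{weighted space}, for each $a \in \mathbb{D}$ we have $\langle f,g\rangle_{\mathcal{H}^{a}} = \langle \widetilde{k_{a}}f,\widetilde{k_{a}}g\rangle_{\mathcal{H}} = \int_{\mathbb{T}} f\overline{g}\,|\widetilde{k_{a}}|^{2}\,d\sigma$, since the Hardy inner product is a boundary $L^{2}$ inner product. Consequently, for $(\mathbf{a},\boldsymbol{\theta})\in \mathbb{D}^{M}\times \Sigma_{M}$,
\begin{equation*}
\langle f,g\rangle_{\mathcal{H}^{\mathbf{a},\boldsymbol{\theta}}} \;=\; \int_{\mathbb{T}} f\overline{g}\,w_{\mathbf{a},\boldsymbol{\theta}}\,d\sigma, \qquad w_{\mathbf{a},\boldsymbol{\theta}} \;\equiv\; \sum_{m=0}^{M}\theta_{m}\,|\widetilde{k_{a_{m}}}|^{2},
\end{equation*}
with the convention $\widetilde{k_{a_{0}}}\equiv 1$.

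Next I would verify that the weight $w_{\mathbf{a},\boldsymbol{\theta}}$ is bounded above and below on $\mathbb{T}$. Boundedness above is immediate since each $\widetilde{k_{a_{m}}}(\zeta) = \frac{\sqrt{1-|a_{m}|^{2}}}{1-\overline{a_{m}}\zeta}$ is bounded on $\overline{\mathbb{D}}$; boundedness below follows because the single term $\theta_{0}\cdot 1 = \theta_{0}$ (if $\theta_{0}>0$) or any term $\theta_{m}|\widetilde{k_{a_{m}}}|^{2}$ contributes a uniform positive lower bound on $\mathbb{T}$ (as $\sup_{\zeta\in\mathbb{T}}|1-\overline{a_{m}}\zeta|<\infty$). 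Hence $\log w_{\mathbf{a},\boldsymbol{\theta}} \in L^{\infty}(\mathbb{T})$.

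Now I would construct the required multiplier as the outer function
\begin{equation*}
F(z) \;\equiv\; \exp\!\left( \int_{\mathbb{T}} \frac{\zeta + z}{\zeta - z}\,\tfrac{1}{2}\log w_{\mathbf{a},\boldsymbol{\theta}}(\zeta)\,d\sigma(\zeta)\right).
\end{equation*}
Since $\log w_{\mathbf{a},\boldsymbol{\theta}}$ is bounded, both $F$ and $1/F$ belong to $H^{\infty}(\mathbb{D}) = M_{\mathcal{H}}$, so $F$ is an invertible multiplier. Moreover $|F|^{2} = w_{\mathbf{a},\boldsymbol{\theta}}$ a.e.~on $\mathbb{T}$, which gives immediately
\begin{equation*}
\langle Ff, Fg\rangle_{\mathcal{H}} \;=\; \int_{\mathbb{T}} f\overline{g}\,|F|^{2}\,d\sigma \;=\; \int_{\mathbb{T}} f\overline{g}\,w_{\mathbf{a},\boldsymbol{\theta}}\,d\sigma \;=\; \langle f,g\rangle_{\mathcal{H}^{\mathbf{a},\boldsymbol{\theta}}}.
\end{equation*}
This is exactly the identity (\ref{outer}). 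For the normalization $\|F\|_{\mathcal{H}} = 1$, note that Corollary \ref{constant 1} yields $\|1\|_{\mathcal{H}^{\mathbf{a},\boldsymbol{\theta}}} = 1$, and applying the identity just established to $f = g = 1$ gives $\|F\|_{\mathcal{H}}^{2} = \|1\|_{\mathcal{H}^{\mathbf{a},\boldsymbol{\theta}}}^{2} = 1$. We then set $\widetilde{k_{\mathbf{a},\boldsymbol{\theta}}} \equiv F$.

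There is really no serious obstacle: the key mechanism is that the Hardy inner product is genuinely an $L^{2}$ integral on the boundary, so convex combinations of shifted inner products remain $L^{2}$ integrals (with respect to a strictly positive bounded weight), and outer functions provide exactly the holomorphic square roots needed. The step that deserves care is confirming the uniform upper and lower bounds on $w_{\mathbf{a},\boldsymbol{\theta}}$ on $\mathbb{T}$ for an arbitrary choice of $(\mathbf{a},\boldsymbol{\theta})$, so that $F^{\pm 1}\in H^{\infty}$ rather than merely in $H^{2}$. The same argument manifestly fails on the ball or polydisc in higher dimensions (as discussed later in Section 6), because the Hardy inner product is an integral over the boundary but its decomposition into weighted pieces no longer corresponds to an $L^{2}$ factorization by a single scalar holomorphic function.
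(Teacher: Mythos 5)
Your proposal is correct and is essentially the paper's own proof: both construct the same outer function with modulus squared equal to $\sum_{m=0}^{M}\theta_{m}|\widetilde{k_{a_{m}}}|^{2}$ on $\mathbb{T}$, verify the inner-product identity by the same boundary $L^{2}$ computation, and normalize via $\Vert 1\Vert_{\mathcal{H}^{\mathbf{a},\boldsymbol{\theta}}}=1$. Your explicit check that the weight is bounded above and below on $\mathbb{T}$ (so that $F^{\pm 1}\in H^{\infty}$) is a small point the paper asserts without detail, but the argument is the same.
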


\begin{proof}
Take $\widetilde{k_{\mathbf{a,\theta }}}$ to be the outer function%
\begin{equation*}
\widetilde{k_{\mathbf{a,\theta }}}\left( z\right) \equiv \exp \left\{ \frac{1%
}{2\pi }\int_{0}^{2\pi }\frac{e^{it}+z}{e^{it}-z}\ln \left(
\sum_{m=0}^{M}\theta _{m}\left\vert \widetilde{k_{a_{m}}}\left(
e^{it}\right) \right\vert ^{2}\right) dt\right\} ,
\end{equation*}%
which by \cite[Theorem 17.16 (b)]{Rud4} satisfies $\left\vert \widetilde{k_{%
\mathbf{a,\theta }}}\left( e^{it}\right) \right\vert
^{2}=\sum_{m=0}^{M}\theta _{m}\left\vert \widetilde{k_{a_{m}}}\left(
e^{it}\right) \right\vert ^{2}$ for almost every $0\leq t<2\pi $ (the
equality actually holds for all $t$ by continuity of the $\widetilde{%
k_{a_{m}}}$ on the boundary). Here the normalized reproducing kernel $%
\widetilde{k_{a_{m}}}\left( z\right) $ is given by $\widetilde{k_{a_{m}}}%
\left( z\right) =\frac{\sqrt{1-\left\vert a_{m}\right\vert ^{2}}}{1-%
\overline{a_{m}}z}$. Then $\widetilde{k_{\mathbf{a,\theta }}}$ and $\frac{1}{%
\widetilde{k_{\mathbf{a,\theta }}}}$ are bounded in the disk and we have%
\begin{eqnarray*}
\left\langle \widetilde{k_{\mathbf{a,\theta }}}f,\widetilde{k_{\mathbf{%
a,\theta }}}g\right\rangle _{\mathcal{H}} &=&\int_{0}^{2\pi }\widetilde{k_{%
\mathbf{a,\theta }}}\left( e^{it}\right) f^{\ast }\left( e^{it}\right) \ 
\overline{\widetilde{k_{\mathbf{a,\theta }}}\left( e^{it}\right) g^{\ast
}\left( e^{it}\right) }\ dt \\
&=&\int_{0}^{2\pi }f^{\ast }\left( e^{it}\right) \ \overline{g^{\ast }\left(
e^{it}\right) }\ \left\vert \widetilde{k_{\mathbf{a,\theta }}}\left(
e^{it}\right) \right\vert ^{2}dt=\sum_{m=0}^{M}\theta _{m}\int_{0}^{2\pi
}f^{\ast }\left( e^{it}\right) \ \overline{g^{\ast }\left( e^{it}\right) }\
\left\vert \widetilde{k_{a_{m}}}\left( e^{it}\right) \right\vert ^{2}dt \\
&=&\sum_{m=0}^{M}\theta _{m}\left\langle \widetilde{k_{a_{m}}}f,\widetilde{%
k_{a_{m}}}g\right\rangle _{H^{2}\left( \mathbb{D}\right)
}=\sum_{m=0}^{M}\theta _{m}\left\langle f,g\right\rangle _{\mathcal{H}%
^{a_{m}}}=\left\langle f,g\right\rangle _{\mathcal{H}^{\mathbf{a,\theta }}}\
.
\end{eqnarray*}%
In particular $\left\Vert \widetilde{k_{\mathbf{a,\theta }}}\right\Vert
_{H^{2}\left( \mathbb{D}\right) }=\left\Vert 1\right\Vert _{\mathcal{H}^{%
\mathbf{a,\theta }}}=1$, and so we have shown that $\widetilde{k_{\mathbf{%
a,\theta }}}$ is a normalized invertible multiplier satisfying (\ref{outer}).
\end{proof}

Now we show that the Invertible Multiplier Property \emph{fails} for the Szeg%
\"{o} kernel for the Hardy space on the ball in higher dimensions by showing
that when at least two of the $\theta _{m}$ are positive, then there are no
invertible multipliers whose real parts have boundary values equal to $\ln
\left( \sum_{m=0}^{M}\theta _{m}\left\vert \widetilde{k_{a_{m}}}\right\vert
^{2}\right) $ almost everywhere. This should be contrasted with results of
Aleksandrov \cite{Ale} and L\NEG{o}w \cite{Low} that yield \emph{nonvanishing%
} multipliers whose real parts \emph{do} have boundary values equal to $\ln
\left( \sum_{m=0}^{M}\theta _{m}\left\vert \widetilde{k_{a_{m}}}\right\vert
^{2}\right) $ almost everywhere. \ In the proof of Theorem \ref{RKCP implies
Adult Toe} above (see (\ref{reason})) it was essential that $\widetilde{k_{%
\mathbf{a,\theta }}}$ was an \emph{invertible} multiplier, as opposed to a
more general \emph{nonvanishing} multiplier. But the reciprocals of the
Aleksandrov and L\NEG{o}w multipliers are not bounded in the ball (only
their boundary values are bounded almost everywhere on the sphere), and in
fact the reciprocals do not belong to any reasonable class of holomorphic
functions on the ball. These nonvanishing multipliers can be thought of as
ball analogues of products of \emph{singular} inner functions with outer
functions in the disk.

\begin{lemma}
\label{ball failure}The Hardy space $\mathcal{H}=H^{2}\left( \mathbb{B}%
_{n}\right) $ on the unit ball in $\mathbb{C}^{n}$ fails to have the
Invertible Multiplier Property when $n>1$. The failure is spectacular in
that for \emph{any} $\left( \mathbf{a,\theta }\right) \in \Omega ^{M}\times
\Sigma _{M}$ with at least two of the $\theta _{m}$ positive, there is no
normalized invertible multiplier $\widetilde{k_{\mathbf{a,\theta }}}\in M_{%
\mathcal{H}}=H^{\infty }\left( \mathbb{B}_{n}\right) $ satisfying (\ref%
{outer}).
\end{lemma}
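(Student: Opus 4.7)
The plan is to deduce from \eqref{outer} that $\log\mu$ (with $\mu := \theta_0 + \sum_{m\geq 1}\theta_m|\widetilde{k_{a_m}}|^2$) must extend to a bounded pluriharmonic function on $\mathbb{B}_n$, and then to rule this out by a symmetry reduction followed by an explicit Levi-form computation.

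First I would apply \eqref{outer} to arbitrary $f,g\in H^2(\mathbb{B}_n)$, noting that $\langle f,g\rangle_{\mathcal{H}} = \int_{\partial\mathbb{B}_n}f\bar g\,d\sigma$, which rewrites \eqref{outer} as $\int f\bar g(\mu - |\widetilde{k_{\mathbf{a,\theta}}}|^2)\,d\sigma = 0$. Since products $\zeta^\alpha\overline{\zeta^\beta}$ (with $f,g$ monomials) span $L^2(d\sigma)$, this forces $|F|^2 = \mu$ a.e.\ on $\partial\mathbb{B}_n$, where $F := \widetilde{k_{\mathbf{a,\theta}}}$. Because $F$ is invertible in $M_\mathcal{H} = H^\infty(\mathbb{B}_n)$ and $\mathbb{B}_n$ is simply connected, $\log F$ is a bounded holomorphic function, so $u := \log|F|^2 = 2\,\mathrm{Re}\,\log F$ is a bounded pluriharmonic function on $\mathbb{B}_n$ whose boundary trace is $\log\mu$.

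Next I would exploit symmetry. Let $V\subset\mathbb{C}^n$ be the complex span of $\{a_m:\theta_m>0\}$, of complex dimension $d$. Since $|\widetilde{k_{a_m}}(\zeta)|^2$ depends only on $\bar a_m\cdot\zeta$ and $a_m\in V$, the function $\mu$ is invariant under the subgroup $U(n-d)\subset U(n)$ fixing $V$ pointwise. Uniqueness of bounded harmonic extensions makes $u$ invariant as well. Writing $u = 2\,\mathrm{Re}\,H$ for bounded holomorphic $H$, the identity $u\circ R = u$ forces $H\circ R - H = c_R\in i\mathbb{R}$; the map $R\mapsto c_R$ is a continuous homomorphism from the compact connected group $U(n-d)$ into $i\mathbb{R}$, hence trivial. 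Therefore $H$ itself is $U(n-d)$-invariant, and depends only on $z|_V$. As $\zeta\in\partial\mathbb{B}_n$ varies, $\zeta|_V$ sweeps out all of $V\cap\overline{\mathbb{B}_n}$, so the boundary identity forces $\log\nu$ to be pluriharmonic on the open $d$-ball $V\cap\mathbb{B}_n$, where $\nu(w) := \theta_0 + \sum_m\theta_m|K_m(w)|^2$ with $K_m(w) := (1-|a_m|^2)^{n/2}(1-\bar a_m\cdot w)^{-n}$.

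Finally, a direct calculation using $\partial_i|K_m|^2 = \bar K_m\partial_iK_m$ and symmetrisation in $(l,m)$ gives
\begin{equation*}
\nu^2\,\partial_i\bar\partial_j\log\nu \;=\; \theta_0\sum_m\theta_m(\partial_iK_m)\overline{(\partial_jK_m)} \;+\; \tfrac{1}{2}\sum_{l,m}\theta_l\theta_m\,v_{l,m,i}\,\overline{v_{l,m,j}},
\end{equation*}
where $v_{l,m,i} := \bar K_l\partial_iK_m - \bar K_m\partial_iK_l$. Both terms are positive semidefinite Hermitian matrices, so pluriharmonicity forces both to vanish identically on $V\cap\mathbb{B}_n$. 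Using $\partial_iK_m = n\bar a_{m,i}K_m/(1-\bar a_m\cdot w)$ and evaluating at $w=0$, vanishing of the first term (when $\theta_0>0$) forces $a_m=0$ for every $m$ with $\theta_m>0$, while $v_{l,m}\equiv 0$ (when $\theta_l,\theta_m>0$) forces $a_l=a_m$. Neither is compatible with having two genuinely distinct effective kernels, completing the contradiction when $d<n$. The main obstacle I anticipate is the exceptional case $d=n$, where the $U(n-d)$-argument is vacuous and $\log\mu$ need not coincide with the pluriharmonic extension inside $\mathbb{B}_n$; there one instead uses that the slice average $\int_0^{2\pi}\log\mu(e^{i\theta}v)\,d\theta/(2\pi)$ must equal $u(0)$ independently of $v\in\partial\mathbb{B}_n$, and a direct Fourier/Taylor expansion (the leading non-constant correction is proportional to $|\sum_m\theta_m(1-|a_m|^2)^n\bar a_m\cdot v|^2$) shows this average depends non-trivially on $v$ for any non-trivial configuration.
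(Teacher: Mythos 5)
Your opening reduction (the boundary identity $|F^{\ast }|^{2}=\mu $ via Stone--Weierstrass, the holomorphic logarithm, and the passage to slices) is the same as the paper's, but from there you take a genuinely different and more ambitious route: the paper proves only the special case $M=1$, $\theta _{0}=\theta _{1}=\frac{1}{2}$, $a_{1}=\alpha e_{1}$ (explicitly leaving the general case to the reader), by showing that the slice averages of $\ln \left( 1+|\widetilde{k_{a_{1}}}|^{2}\right) $ have strictly positive Laplacian along an explicit curve in the sphere; you instead attack arbitrary configurations via the $U(n-d)$-symmetry reduction followed by a Levi-form identity. The $d<n$ branch of your argument is essentially sound and covers strictly more than the paper's computation. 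Three small repairs: $\log F$ need not be bounded --- only $\operatorname{Re}\log F=\log \left\vert F\right\vert $ is, which is all you actually use, so do not claim boundedness of $\log F$ itself; in the Lagrange-type identity the rank-one vectors should be $w_{l,m,i}=K_{l}\partial _{i}K_{m}-K_{m}\partial _{i}K_{l}$ rather than $\bar{K}_{l}\partial _{i}K_{m}-\bar{K}_{m}\partial _{i}K_{l}$ (harmless for the conclusion since $K_{l}$ is nonvanishing, but the identity as written is false); and the nondegeneracy caveat you implicitly invoke (``two genuinely distinct effective kernels'') is genuinely needed, since the lemma as literally stated fails when all effective $a_{m}$ coincide and $\theta _{0}=0$, or when all effective $a_{m}$ vanish.

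The genuine gap is the case $d=n$. The sketch there is not a proof: a Taylor expansion of the slice average about $\mathbf{a}=0$ can only rule out configurations with all $\left\vert a_{m}\right\vert $ small, whereas the lemma concerns arbitrary fixed points of the ball, and non-vanishing of the leading term of an expansion at a point you are not near proves nothing. Moreover the leading term you quote is incorrect: carrying the expansion of $\frac{1}{2\pi }\int_{0}^{2\pi }\log \mu \left( e^{i\theta }v\right) d\theta $ to second order, the $v$-dependent part is $n^{2}\left( \sum_{m}\theta _{m}\left\vert \bar{a}_{m}\cdot v\right\vert ^{2}-\left\vert \sum_{m}\theta _{m}\bar{a}_{m}\cdot v\right\vert ^{2}\right) $ --- a variance, bounded below by $\theta _{0}n^{2}\sum_{m}\theta _{m}\left\vert \bar{a}_{m}\cdot v\right\vert ^{2}$ --- not $\left\vert \sum_{m}\theta _{m}(1-\left\vert a_{m}\right\vert ^{2})^{n}\bar{a}_{m}\cdot v\right\vert ^{2}$. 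To close this case you need a non-perturbative argument; one option in the spirit of what you already proved is to note that your Levi-form identity makes $\log \mu $ plurisubharmonic on $\mathbb{B}_{n}$ and strictly so somewhere unless the configuration is degenerate, so that by Jensen's formula the circle average of $\lambda \mapsto \log \mu \left( \lambda \zeta \right) $ exceeds $\log \mu \left( 0\right) $ by a Riesz-mass term, and one must then show this excess actually varies with $\zeta $ --- which is precisely the explicit Laplacian computation the paper carries out for its special case, and which you would have to reproduce for general configurations.
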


\begin{proof}
We prove the case when $M=1$, $\theta _{0}=\theta _{1}=\frac{1}{2}$ and $%
a_{1}=\alpha e_{1}=\left( \alpha ,0,\ldots,0\right) \in \mathbb{B}_{n}$, and
leave the general case to the interested reader. We assume, in order to
derive a contradiction, that that there is a normalized invertible
multiplier $\varphi \in M_{\mathcal{H}}$ such that%
\begin{equation*}
\left\langle f,g\right\rangle _{\mathcal{H}}+\left\langle
k_{a_{1}}f,k_{a_{1}}g\right\rangle _{\mathcal{H}}=2\left\langle
f,g\right\rangle _{\mathcal{H}^{\mathbf{a,\theta }}}=\left\langle \varphi
f,\varphi g\right\rangle _{\mathcal{H}},\ \ \ \ \ f,g\in \mathcal{H},
\end{equation*}%
where we have absorbed the factor $2$ into $\varphi $ for convenience.
Unraveling notation this becomes%
\begin{equation*}
\int_{\partial \mathbb{B}_{n}}f\overline{g}\left( 1+\left\vert \widetilde{%
k_{a_{1}}}\right\vert ^{2}\right) d\sigma =\int_{\partial \mathbb{B}_{n}}f%
\overline{g}\left\vert \varphi \right\vert ^{2}d\sigma ,\ \ \ \ \ f,g\in
H^{2}\left( \mathbb{B}_{n}\right) .
\end{equation*}%
In particular we obtain that%
\begin{equation*}
1+\left\vert \widetilde{k_{a_{1}}}\right\vert ^{2}=\left\vert \varphi
\right\vert ^{2}\text{ on }\partial \mathbb{B}_{n}
\end{equation*}%
by the Stone-Weierstrass theorem, since the algebra $\mathcal{A}$ generated
by restrictions to $\partial \mathbb{B}_{n}$ of $f$ and $\overline{g}$ for $%
f,g\in A\left( \mathbb{B}_{n}\right) $, is self-adjoint, separates points
and contains the constants on $\partial \mathbb{B}_{n}$; and thus $\mathcal{A%
}$ is dense in $C\left( \partial \mathbb{B}_{n}\right) $. Since $\varphi $
is an \emph{invertible} multiplier, we claim it has a holomorphic logarithm $%
F=\log \varphi $ in $H^{\infty }\left( \mathbb{B}_{n}\right) $ whose
boundary values satisfy 
\begin{equation*}
2\func{Re}F=2\func{Re}\log \varphi =\ln \left\vert \varphi \right\vert ^{2}.
\end{equation*}%
Indeed, to see this we consider the dilates $F_{r}\left( z\right) \equiv
F\left( rz\right) $ of $F$ which are clearly in $H^{\infty }\left( \mathbb{B}%
_{n}\right) $ (although not uniformly in $0<r<1$), and whose real parts are
uniformly bounded in $0<r<1$:%
\begin{equation*}
\func{Re}F_{r}\left( z\right) =\func{Re}F\left( rz\right) =\func{Re}\log
\varphi \left( rz\right) =\ln \left\vert \varphi \left( rz\right)
\right\vert \leq \left\Vert \ln \left\vert \varphi \right\vert \right\Vert
_{\infty }<\infty ,
\end{equation*}%
where the boundedness of $\ln \left\vert \varphi \right\vert $ follows from
the maximum principle since $\ln \left\vert \varphi \right\vert $ is
continuous in $\overline{\mathbb{B}_{n}}$ and harmonic in $\mathbb{B}_{n}$.
Then the Koranyi-Vagi theorem gives the uniform $L^{p}$ estimate (see e.g. 
\cite[inequality (1) on page 125]{Rud2})%
\begin{equation*}
\int_{\partial \mathbb{B}_{n}}\left\vert F_{r}\right\vert ^{p}d\sigma \leq
M_{p,n}\int_{\partial \mathbb{B}_{n}}\left\vert \func{Re}F_{r}\right\vert
^{p}d\sigma \leq M_{p,n}\left\Vert \ln \left\vert \varphi \right\vert
\right\Vert _{\infty }^{p},\ \ \ \ \ 1<p<\infty .
\end{equation*}%
Thus $F\in H^{p}\left( \mathbb{B}_{n}\right) $ for all $1<p<\infty $, and in
particular $F$ is in the Hardy space $H^{2}\left( \mathbb{B}_{n}\right) $.
Thus the restriction $h$ of $\ln \left\vert \varphi \right\vert ^{2}=\ln
\left( 1+\left\vert \widetilde{k_{a_{1}}}\right\vert ^{2}\right) $ to $%
\partial \mathbb{B}_{n}$ is the boundary value function of the real part $%
\func{Re}F$ of a holomorphic function $F$ in $H^{2}\left( \mathbb{B}%
_{n}\right) $. It follows that for almost every $\zeta \in \partial \mathbb{B%
}_{n}$, the slice function $F_{\zeta }\left( \lambda \right) =F\left(
\lambda \zeta \right) $ defined on the slice $S_{\zeta }=\left\{ z\in 
\mathbb{B}_{n}:z=\lambda \zeta ,\lambda \in \mathbb{D}\right\} $, is in $%
H^{2}\left( S_{\zeta }\right) \approx H^{2}\left( \mathbb{D}\right) $ and
has boundary values equal to $h\mid _{\partial S_{\zeta }}$ almost
everywhere. In particular then, the integral of $h=\ln \left\vert \varphi
\right\vert ^{2}$ on the boundary of such a slice $S_{\zeta }$, with respect
to Haar measure $dm$ on $\mathbb{T}=\partial S_{\zeta }$, must equal the
value of $h$ at the origin, i.e. the constant $\ln \left\vert \varphi \left(
0\right) \right\vert ^{2}$. Thus we have shown that the function%
\begin{equation*}
G\left( \zeta \right) \equiv \int_{\partial S_{\zeta }}\ln \left(
1+\left\vert \widetilde{k_{a_{1}}}\right\vert ^{2}\right)
dm=\int_{\left\vert \lambda \right\vert =1}\ln \left( 1+\left\vert \frac{%
\sqrt{1-\left\vert \alpha \right\vert ^{2}}}{1-\alpha \lambda \zeta _{1}}%
\right\vert ^{2n}\right) dm\left( \lambda \right)
\end{equation*}%
equals the constant $\ln \left\vert \varphi \left( 0\right) \right\vert ^{2}$
almost everywhere on the sphere $\partial \mathbb{B}_{n}$. However, it is
clear from the integral on the right hand side that the function $G$ is
continuous on the sphere $\partial \mathbb{B}_{n}$. In particular the map 
\begin{equation*}
g\left( z\right) \equiv G\left( z,0,\ldots,0,\sqrt{1-\left\vert z\right\vert
^{2}}\right) =\int_{\left\vert \lambda \right\vert =1}\ln \left(
1+\left\vert \frac{\sqrt{1-\left\vert \alpha \right\vert ^{2}}}{1-\alpha
\lambda z}\right\vert ^{2n}\right) dm\left( \lambda \right)
\end{equation*}%
is constant for $z\in \mathbb{D}$.

We now derive a contradiction by calculating that $g$ is \emph{not} constant
provided that $\alpha \neq 0$. In fact, $g$ is clearly twice differentiable
in the disk, and we will now show that $g$ has nonvanishing Laplacian in the
disk (this approach is suggested by a theorem of Forelli - see e.g. \cite[%
Theorem 4.4.4]{Rud3} - that shows $u:\mathbb{B}_{n}\rightarrow \mathbb{R}$
is the real part of a holomorphic function on the ball if and only if $u$ is
harmonic in each slice and smooth near the origin). We have 
\begin{equation*}
g\left( z\right) =\int_{\left\vert \lambda \right\vert =1}\ln A\left(
\lambda z\right) dm\left( \lambda \right) ,
\end{equation*}%
where 
\begin{equation*}
A\left( z\right) \equiv 1+\left\vert \frac{\sqrt{1-\left\vert \alpha
\right\vert ^{2}}}{1-\alpha z}\right\vert ^{2n}=1+\left( 1-\left\vert \alpha
\right\vert ^{2}\right) ^{n}\left\vert \left( 1-\alpha z\right)
^{-n}\right\vert ^{2}\equiv 1+c_{\alpha ,n}\left\vert \left( 1-\alpha
z\right) ^{-n}\right\vert ^{2}.
\end{equation*}%
Then we have%
\begin{eqnarray*}
\frac{\partial }{\partial z}A\left( \lambda z\right) &=&c_{\alpha ,n}%
\overline{\left( 1-\alpha \lambda z\right) ^{-n}}\left( -n\right) \left(
1-\alpha \lambda z\right) ^{n+1}\left( -\alpha \lambda \right) =c_{\alpha
,n}n\alpha \lambda \left\vert \left( 1-\alpha \lambda z\right)
^{-n}\right\vert ^{2}\left( 1-\alpha \lambda z\right) \\
&=&c_{\alpha ,n}n\alpha \lambda \left( 1-\alpha \lambda z\right) \left(
A\left( \lambda z\right) -1\right) ,
\end{eqnarray*}%
and of course $\frac{\partial }{\partial \overline{z}}A\left( \lambda
z\right) =\overline{\frac{\partial }{\partial z}A\left( \lambda z\right) }$
because $A$ is real. Thus%
\begin{eqnarray*}
&&\frac{\partial }{\partial \overline{z}}\frac{\partial }{\partial z}\ln
A\left( \lambda z\right) =\frac{\partial }{\partial \overline{z}}\left\{
c_{\alpha ,n}n\alpha \lambda \left( 1-\alpha \lambda z\right) \left( 1-\frac{%
1}{A\left( \lambda z\right) }\right) \right\} \\
&=&-c_{\alpha ,n}n\alpha \lambda \left( 1-\alpha \lambda z\right) \frac{%
\partial }{\partial \overline{z}}\frac{1}{A\left( \lambda z\right) }%
=c_{\alpha ,n}n\alpha \lambda \left( 1-\alpha \lambda z\right) \frac{1}{%
A\left( \lambda z\right) ^{2}}\frac{\partial }{\partial \overline{z}}A\left(
\lambda z\right) \\
&=&c_{\alpha ,n}n\alpha \lambda \left( 1-\alpha \lambda z\right) \frac{1}{%
A\left( \lambda z\right) ^{2}}\overline{\left\{ n\alpha \lambda \left(
1-\alpha \lambda z\right) \left( A\left( \lambda z\right) -1\right) \right\} 
}=c_{\alpha ,n}\left\vert n\alpha \lambda \left( 1-\alpha \lambda z\right)
\right\vert ^{2}\frac{A\left( z\right) -1}{A\left( \lambda z\right) ^{2}},
\end{eqnarray*}%
which is strictly positive for $\left\vert \lambda \right\vert =1$ if $%
\alpha \neq 0$. This shows that 
\begin{equation*}
\frac{1}{4}\bigtriangleup g\left( z\right) =\frac{\partial }{\partial 
\overline{z}}\frac{\partial }{\partial z}F\left( z\right) =\int_{\left\vert
\lambda \right\vert =1}\left\{ \frac{\partial }{\partial \overline{z}}\frac{%
\partial }{\partial z}\ln A\left( \lambda z\right) \right\} dm\left( \lambda
\right) >0
\end{equation*}%
for $z\in \mathbb{D}$, and this shows that $g$ is not constant in the disk,
providing the claimed contradiction. This completes the proof of Lemma \ref%
{ball failure}.
\end{proof}

Essentially the same argument as above shows that the Hardy space $%
H^{2}\left( \mathbb{D}^{n}\right) $ on the polydisc $\mathbb{D}^{n}$ also
fails to have the Invertible Multiplier Property when $n>1$.

\begin{lemma}
\label{polydisc failure}The Hardy space $\mathcal{H}=H^{2}\left( \mathbb{D}%
^{n}\right) $ on the polydisc $\mathbb{D}^{n}$ in $\mathbb{C}^{n}$ fails to
have the Invertible Multiplier Property when $n>1$.
\end{lemma}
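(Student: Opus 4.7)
The plan is to mirror the proof of Lemma \ref{ball failure}, replacing the complex-line slices through the origin of $\mathbb{B}_n$ with the torus slices $\lambda\mapsto(\lambda\omega_1,\ldots,\lambda\omega_n)$ for $\omega\in\mathbb{T}^n$, which at $|\lambda|=1$ land in the distinguished boundary $\mathbb{T}^n$. I would take the simplest test case $M=1$, $\theta_0=\theta_1=\tfrac{1}{2}$, and $a_1=(\alpha_1,\ldots,\alpha_n)\in\mathbb{D}^n$ with every $\alpha_j\neq 0$. Assuming for contradiction the existence of a normalized invertible multiplier $\varphi\in H^{\infty}(\mathbb{D}^n)$ satisfying (\ref{outer}), polarization and the Stone-Weierstrass theorem applied to the restriction of the polydisc algebra $A(\mathbb{D}^n)$ to $\mathbb{T}^n$ force, after rescaling $\varphi$ to absorb an overall constant exactly as in the ball case, the boundary identity
\begin{equation*}
|\varphi|^2=1+|\widetilde{k_{a_1}}|^2\quad\text{a.e.\ on }\mathbb{T}^n.
\end{equation*}
Since $\mathbb{D}^n$ is simply connected and $\varphi,\,1/\varphi\in H^{\infty}(\mathbb{D}^n)$, the holomorphic logarithm $F\equiv\log\varphi$ exists on $\mathbb{D}^n$ with bounded real part, so $F\in H^p(\mathbb{D}^n)$ for every $p<\infty$ by iterating the one-dimensional M.\,Riesz conjugate function estimate in each variable.

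Next I would apply a slice mean value argument. A Fubini computation using rotation invariance of Haar measure on $\mathbb{T}^n$ shows that for almost every $\omega\in\mathbb{T}^n$ the slice function $F_\omega(\lambda)\equiv F(\lambda\omega)$ lies in $H^2(\mathbb{D})$, so the mean value property on the disk gives $F(0)=F_\omega(0)=\int_{\mathbb{T}}F_\omega^*\,dm$. Taking real parts and substituting the boundary identity yields, for almost every $\omega\in\mathbb{T}^n$,
\begin{equation*}
G(\omega)\equiv\int_{\mathbb{T}}\log\bigl(1+|\widetilde{k_{a_1}}(e^{i\theta}\omega)|^2\bigr)\,dm(e^{i\theta})=2\operatorname{Re}F(0).
\end{equation*}
Since each factor $|1-\bar\alpha_j e^{i\theta}\omega_j|^2$ is bounded below by $(1-|\alpha_j|)^2$ uniformly in $(\theta,\omega)$, the integrand is bounded and $G$ is continuous on $\mathbb{T}^n$. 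Hence the a.e.\ identity extends to all of $\mathbb{T}^n$, forcing $G$ to be constant.

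Finally I derive a contradiction by showing $G$ is not constant. Writing $\alpha_j=r_je^{i\psi_j}$ and $\omega_j=e^{i\mu_j}$, the substitution $\theta\mapsto\theta-(\mu_1-\psi_1)$ reveals that $G(\omega)$ depends only on the relative phases $\delta_j\equiv(\mu_j-\psi_j)-(\mu_1-\psi_1)$ for $j\geq 2$, a nontrivial function of $n-1$ parameters. Restricting to $n=2$ and writing $A_r(\theta)\equiv\frac{1-r^2}{1+r^2-2r\cos\theta}$ for the Poisson kernel at radius $r$, this becomes
\begin{equation*}
G(\omega)=\frac{1}{2\pi}\int_0^{2\pi}\log\bigl(1+A_{r_1}(\theta)A_{r_2}(\theta+\delta_2)\bigr)\,d\theta.
\end{equation*}
I would then verify that the values at $\delta_2=0$ and $\delta_2=\pi$ differ, e.g.\ by noting that near $\theta=0$ the $\delta_2=0$ integrand has a peak of order $\log\!\bigl(\tfrac{(1+r_1)(1+r_2)}{(1-r_1)(1-r_2)}\bigr)$ which is strictly larger than the corresponding peak at $\delta_2=\pi$, yielding a strict inequality after integration by convexity of $\log(1+x)$; equivalently, one can compute the mixed-signature Fourier coefficient $\widehat u(1,-1)$ of $u\equiv\log(1+|\widetilde{k_{a_1}}|^2)$ on $\mathbb{T}^2$ and show it is nonzero, thereby also showing $u$ is not the boundary trace of a pluriharmonic function on $\mathbb{D}^2$. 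The case $n>2$ reduces to $n=2$ by setting all but two coordinates of $\omega$ to $1$. The main obstacle is the final rigorous verification of non-constancy, but this is a routine explicit computation analogous to the $\overline{\partial}\partial$-calculation at the end of the proof of Lemma \ref{ball failure}.
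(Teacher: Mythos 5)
Your proposal follows the paper's proof of Lemma \ref{polydisc failure} through all of its structural steps: the Stone--Weierstrass identification $\left\vert \varphi \right\vert ^{2}=1+\left\vert \widetilde{k_{a_{1}}}\right\vert ^{2}$ on $\mathbb{T}^{n}$, the bounded holomorphic logarithm $F=\log \varphi $ in $H^{p}\left( \mathbb{D}^{n}\right) $, and the slice mean value argument showing that the circular averages $G\left( \omega \right) =\int_{\mathbb{T}}\log \left( 1+\left\vert \widetilde{k_{a_{1}}}\left( \lambda \omega \right) \right\vert ^{2}\right) dm\left( \lambda \right) $ equal $2\operatorname{Re}F\left( 0\right) $ a.e., hence everywhere by continuity. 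Where you diverge is the endgame: the paper restricts $G$ to the diagonal $\left( z,z,0,\ldots ,0\right) $ with $z$ ranging over the open disk and reuses the Laplacian computation from Lemma \ref{ball failure}, whereas you keep $G$ on $\mathbb{T}^{n}$, observe that it depends only on the relative phases $\delta _{j}$, and aim to exhibit two phases at which it takes different values. Your framework is the natural one, since the hypothesis only constrains $G$ on the distinguished boundary and the diagonal points $\left( z,z,0,\ldots ,0\right) $ with $\left\vert z\right\vert <1$ do not lie there; indeed the restriction of the integral formula for $G$ to $\left\vert z\right\vert =1$ is constant by rotation invariance, so the contradiction must come from varying the relative phase, exactly as you propose.

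The gap is in the final non-constancy verification, which you flag as the main obstacle but dismiss as routine; as written it is not a proof. First, $\log \left( 1+x\right) $ is concave, not convex, so the appeal to ``convexity of $\log (1+x)$'' is backwards; second, comparing the pointwise peak heights of the two integrands at $\delta _{2}=0$ and $\delta _{2}=\pi $ yields no inequality between the integrals, since the $\delta _{2}=0$ integrand is larger near the peak but smaller elsewhere. The claim is true and closes cleanly if you arrange $r_{1}=r_{2}=r$, e.g.\ by taking $a_{1}=\left( \alpha ,\alpha ,0,\ldots ,0\right) $ as the paper does, which also reduces $n>2$ to $n=2$ automatically because the kernel then involves only two coordinates. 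Writing $f=\log A_{r}$ and using strict convexity of $t\mapsto \log \left( 1+e^{t}\right) $,
\begin{equation*}
\log \bigl( 1+e^{f\left( \theta \right) +f\left( \theta +\pi \right) }\bigr) \leq \tfrac{1}{2}\log \bigl( 1+e^{2f\left( \theta \right) }\bigr) +\tfrac{1}{2}\log \bigl( 1+e^{2f\left( \theta +\pi \right) }\bigr) ,
\end{equation*}
and integrating in $\theta $ gives $G\mid _{\delta _{2}=\pi }\leq G\mid _{\delta _{2}=0}$, with equality only if $f\left( \theta \right) =f\left( \theta +\pi \right) $ a.e., which fails because $A_{r}\left( 0\right) =\frac{1+r}{1-r}\neq \frac{1-r}{1+r}=A_{r}\left( \pi \right) $ for $r>0$. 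Your alternative via $\widehat{u}\left( 1,-1\right) \neq 0$ is also viable but requires an actual computation, and for $a_{1}$ with coordinates of distinct moduli the $\delta _{2}=0$ versus $\delta _{2}=\pi $ comparison needs a separate argument; since a single counterexample disproves the Invertible Multiplier Property, the special choice above suffices.
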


\begin{proof}
We prove the Invertible Multiplier Property fails in the case when $M=1$, $%
\theta _{0}=\theta _{1}=\frac{1}{2}$ and $a_{1}=\alpha e_{1}=\left( \alpha
,\alpha ,\ldots,0\right) \in \mathbb{D}^{n}$ with $\alpha \neq 0$. We
assume, in order to derive a contradiction, that that there is a normalized
invertible multiplier $\varphi \in M_{\mathcal{H}}$ such that%
\begin{equation*}
\left\langle f,g\right\rangle _{\mathcal{H}}+\left\langle
k_{a_{1}}f,k_{a_{1}}g\right\rangle _{\mathcal{H}}=2\left\langle
f,g\right\rangle _{\mathcal{H}^{\mathbf{a,\theta }}}=\left\langle \varphi
f,\varphi g\right\rangle _{\mathcal{H}},\ \ \ \ \ f,g\in \mathcal{H},
\end{equation*}%
where again we have absorbed the factor $2$ into $\varphi $ for convenience.
Unraveling notation this becomes%
\begin{equation*}
\int_{\mathbb{T}^{n}}f\overline{g}\left( 1+\left\vert \widetilde{k_{a_{1}}}%
\right\vert ^{2}\right) dm=\int_{\mathbb{T}^{n}}f\overline{g}\left\vert
\varphi \right\vert ^{2}dm,\ \ \ \ \ f,g\in H^{2}\left( \mathbb{D}%
^{n}\right) .
\end{equation*}%
In particular we obtain that%
\begin{equation*}
1+\left\vert \widetilde{k_{a_{1}}}\right\vert ^{2}=\left\vert \varphi
\right\vert ^{2}\text{ on }\mathbb{T}^{n}
\end{equation*}%
by the Stone-Weierstrass theorem, just as in the proof of Lemma \ref{ball
failure}. Since $\varphi $ is an \emph{invertible} multiplier, the argument
used in the proof of Lemma \ref{ball failure} shows it has a holomorphic
logarithm $F=\log \varphi $ in $H^{2}\left( \mathbb{D}^{n}\right) $ with
boundary values $\ln \left\vert \varphi \right\vert ^{2}=2\func{Re}\log
\varphi =2\func{Re}F$. Thus the restriction $h$ of $\ln \left( 1+\left\vert 
\widetilde{k_{a_{1}}}\right\vert ^{2}\right) $ to $\mathbb{T}^{n}$ is the
distinguished boundary value function of the real part $\func{Re}F$ of a
holomorphic function $F$ in $H^{2}\left( \mathbb{D}^{n}\right) $. It follows
that for almost every $\zeta \in \mathbb{T}^{n}$, the slice function $%
F_{\zeta }\left( \lambda \right) =F\left( \lambda \zeta \right) $ defined on
the slice $S_{\zeta }=\left\{ z\in \mathbb{D}^{n}:z=\lambda \zeta ,\lambda
\in \mathbb{D}\right\} $, is in $H^{2}\left( S_{\zeta }\right) \approx
H^{2}\left( \mathbb{D}\right) $ and has boundary values equal to $h\mid _{%
\mathbb{T}^{n}}$. In particular then, the integral of $h$ on the boundary of
such a slice $S_{\zeta }$, with respect to Haar measure $dm$ on $\mathbb{T}%
=\partial S_{\zeta }$, must equal the constant $\ln \left\vert \varphi
\left( 0\right) \right\vert ^{2}$. Thus we have shown that the function%
\begin{equation*}
G\left( \zeta \right) \equiv \int_{\partial S_{\zeta }}\ln \left(
1+\left\vert \widetilde{k_{a_{1}}}\right\vert ^{2}\right)
dm=\int_{\left\vert \lambda \right\vert =1}\ln \left( 1+\left\vert \frac{%
\sqrt{1-\left\vert \alpha \right\vert ^{2}}}{1-\alpha \lambda \zeta _{1}}%
\frac{\sqrt{1-\left\vert \alpha \right\vert ^{2}}}{1-\alpha \lambda \zeta
_{2}}\right\vert ^{2}\right) dm\left( \lambda \right)
\end{equation*}%
equals the constant $\ln \left\vert \varphi \left( 0\right) \right\vert ^{2}$
almost everywhere on the distinguished boundary $\mathbb{T}^{n}$. In
particular the map 
\begin{equation*}
g\left( z\right) \equiv G\left( z,z,0,\ldots,0\right) =\int_{\left\vert
\lambda \right\vert =1}\ln \left( 1+\left\vert \left( \frac{\sqrt{%
1-\left\vert \alpha \right\vert ^{2}}}{1-\alpha \lambda z}\right)
^{2}\right\vert ^{2}\right) dm\left( \lambda \right)
\end{equation*}%
is the constant $\ln \left\vert \varphi \left( 0\right) \right\vert ^{2}$
almost everywhere on $\mathbb{D}$, and since $g$ is clearly continuous in $%
\mathbb{D}$, it is constant in $\mathbb{D}$. But this function $g$ is the
same as the function $g$ obtained in the previous proof for $n=2$. We showed
there that this function is not constant, and this completes the proof of
Lemma \ref{polydisc failure}.
\end{proof}

We emphasize that $H^{2}\left( \Omega \right) $ for certain bounded finitely
connected planar domains are essentially the only spaces we currently know
to have the Invertible Multiplier Property ($H^{2}\left( \Omega \right) $
has this property if $\Omega $ is simply connected by the Riemann mapping
theorem).

\section{Smoothness spaces and nonholomorphic spaces}

In the first subsection, we show that if $\mathcal{H}$ is a Besov-Sobolev
space $B_{2}^{\sigma }\left( \mathbb{B}_{n}\right) $ on the ball for $\sigma
>0$ and $n\geq 1$, then the kernel multiplier space $K_{\mathcal{H}}$ is an
algebra. Moreover, the reproducing kernels satisfy the hypotheses of the
alternate Toeplitz corona theorem \ref{alternate}, so that as a consequence
of this subsection we obtain the following corollary.

\begin{corollary}
Let $\sigma >0$ and $n\geq 1$. The kernel multiplier algebra $K_{2}^{\sigma
}\left( \mathbb{B}_{n}\right)$ satisfies the Corona Property if and only if
the Convex Poisson Property holds for $B_{2}^{\sigma }\left( \mathbb{B}%
_{n}\right) $.
\end{corollary}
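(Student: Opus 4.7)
The plan is to deduce the corollary directly from part (1) of Theorem \ref{alternate} applied with $\mathcal{H}=B_{2}^{\sigma}(\mathbb{B}_{n})$. Once the four hypotheses of that theorem are verified for this choice of $\mathcal{H}$, the statement is immediate. So the proof reduces to a checklist: (i) multiplier stability, (ii) $1\in\mathcal{H}$, (iii) the Montel property, and (iv) $K_{\mathcal{H}}$ is an algebra.

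First I would verify hypotheses (ii) and (iii), which are essentially automatic. The constants belong to $B_{2}^{\sigma}(\mathbb{B}_{n})$ for every $\sigma>0$, and we may rescale so that $\Vert 1\Vert_{\mathcal{H}}=1$ to match the standing assumption. The Montel property holds because the unit ball of $B_{2}^{\sigma}(\mathbb{B}_{n})$ is a normal family: point evaluations are continuous, holomorphic functions bounded in $\mathcal{H}$ are uniformly bounded on compact subsets of $\mathbb{B}_{n}$, and then Montel's theorem for holomorphic functions produces subsequences converging pointwise on a countable dense set, with the limit retaining the norm bound by the usual weak-compactness argument. I would take $S$ in Definition \ref{def Montel} to be any countable dense subset of $\mathbb{B}_{n}$.

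Next I would verify multiplier stability (i). The reproducing kernel of $B_{2}^{\sigma}(\mathbb{B}_{n})$ is (up to normalization) $k_{a}(z)=(1-\overline{a}\cdot z)^{-(n+2\sigma)}$, which for each $a\in\mathbb{B}_{n}$ extends holomorphically and nonvanishingly to a neighborhood of $\overline{\mathbb{B}_{n}}$. Hence both $k_{a}$ and $1/k_{a}$ lie in $H^{\infty}(\mathbb{B}_{n})$, and by known multiplier results for $B_{2}^{\sigma}(\mathbb{B}_{n})$ (any function holomorphic on a neighborhood of the closed ball is a multiplier), they lie in $M_{\mathcal{H}}$. The map $a\mapsto k_{a}$ from $\mathbb{B}_{n}$ into $M_{\mathcal{H}}$ is in fact continuous: if $a_{j}\to a$ in $\mathbb{B}_{n}$, then $k_{a_{j}}-k_{a}$ tends to zero uniformly on $\overline{\mathbb{B}_{n}}$, and by the same multiplier estimate applied to the difference, this convergence takes place in $M_{\mathcal{H}}$. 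Continuity of course implies lower semicontinuity.

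The main obstacle, and the only nontrivial step, is (iv): that $K_{\mathcal{H}}$ is an algebra when $\mathcal{H}=B_{2}^{\sigma}(\mathbb{B}_{n})$ with $\sigma>0$. This is the content of the forward reference to Subsection 7.1, and it is where the real work lies. To prove it one must show that for $\varphi,\psi$ with $\sup_{a\in\mathbb{B}_{n}}\Vert\varphi\widetilde{k_{a}}\Vert_{\mathcal{H}}<\infty$ and $\sup_{a\in\mathbb{B}_{n}}\Vert\psi\widetilde{k_{a}}\Vert_{\mathcal{H}}<\infty$, the same bound holds for $\varphi\psi\widetilde{k_{a}}$. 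My plan would be to exploit the characterization of $K_{B_{2}^{\sigma}(\mathbb{B}_{n})}$ as a $Q$-type space (analogous to the $Q_{2\sigma}$ identification mentioned in the introduction) and the Carleson-measure description of $\Vert\varphi\widetilde{k_{a}}\Vert_{\mathcal{H}}^{2}$ via integration of $|\nabla\varphi|^{2}$ against $|\widetilde{k_{a}}|^{2}$ with an appropriate weight. Boundedness of $\varphi,\psi\in K_{\mathcal{H}}$ (already established via the embedding $K_{\mathcal{H}}\hookrightarrow\mathcal{H}^{\infty}$) combined with a product-rule estimate for $\nabla(\varphi\psi)$ then reduces the matter to the Carleson bound for each factor. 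With (i)--(iv) in hand, part (1) of Theorem \ref{alternate} immediately yields the stated equivalence.
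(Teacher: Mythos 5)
Your overall route is exactly the paper's: the corollary is read off from part (1) of Theorem \ref{alternate} applied with $\mathcal{H}=B_{2}^{\sigma }\left( \mathbb{B}_{n}\right) $, and the only substantive hypothesis is that $K_{\mathcal{H}}$ is an algebra. Your checks of $1\in \mathcal{H}$, the Montel property, and multiplier stability are fine, modulo one slip: the reproducing kernel of $B_{2}^{\sigma }\left( \mathbb{B}_{n}\right) $ is comparable to $\left( 1-\overline{a}\cdot z\right) ^{-2\sigma }$, not $\left( 1-\overline{a}\cdot z\right) ^{-\left( n+2\sigma \right) }$ (test against the Drury--Arveson case $\sigma =\frac{1}{2}$). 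This does not damage the argument, since either power extends holomorphically and nonvanishingly past $\overline{\mathbb{B}_{n}}$ and hence is an invertible multiplier.

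The gap is in your sketch of the algebra property, which is the whole content of the proof. Two things are being elided. First, the equivalence of $\sup_{a}\left\Vert \varphi \widetilde{k_{a}}\right\Vert _{\mathcal{H}}<\infty $ with ``$\varphi \in H^{\infty }\left( \mathbb{B}_{n}\right) $ plus a one-box Carleson condition on $\left\vert \left( 1-\left\vert z\right\vert ^{2}\right) ^{m+\sigma }\nabla ^{m}\varphi \right\vert ^{2}d\lambda _{n}$'' is itself the main technical input; the paper obtains it by rerunning the Ortega--Fabrega multiplier characterization with weak (one-box) Carleson measures in place of genuine Carleson measures, and for $n\geq 2$ it cannot simply be cited as a known $Q$-space identification. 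Second, and more seriously, your product-rule step is phrased for the first-order gradient $\nabla \left( \varphi \psi \right) $, which controls the norm only when $1+\sigma >\frac{n}{2}$. In general one must differentiate to order $m$ with $m+\sigma >\frac{n}{2}$, and the Leibniz expansion of $\nabla ^{m}\left( \varphi ^{2}\right) $ then produces cross terms $\left( \nabla ^{m-k}\varphi \right) \left( \nabla ^{k}\varphi \right) $ with $1\leq k\leq m-1$ that are \emph{not} of the form ``bounded factor times Carleson factor.'' The paper's device is to take $m$ twice as large as needed ($m>n$ when $p=2$), so that whichever of $k$, $m-k$ is at least $\frac{m}{2}$ exceeds $\frac{n}{2}-\sigma $; the lower-order factor is then absorbed by the Cauchy estimate $\left\vert \left( 1-\left\vert z\right\vert ^{2}\right) ^{j}\nabla ^{j}\varphi \right\vert \lesssim \left\Vert \varphi \right\Vert _{\infty }$, and the higher-order factor is a weak Carleson quantity at an admissible order, using the equivalence of the $WC_{2}^{\sigma }$ norms across different orders $m$. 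Without this (or some equivalent treatment of the intermediate derivatives), the claimed reduction ``to the Carleson bound for each factor'' does not go through for $n\geq 2$ and small $\sigma $.
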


Then in the second subsection we make the obvious extension of kernel multiplier algebras for the Banach spaces of analytic functions $B_p^{\sigma}(\mathbb{D})$ and then prove the Corona Property for the kernel
multiplier algebras $K_{B_{p}^{\sigma }\left( \mathbb{D}\right) }$ on the
disk when $0\leq \sigma < \frac{1}{p}$. We recall here that the algebra $%
B_{2}^{0}\left( \mathbb{D}\right)^{\infty}=H^{\infty }\left( \mathbb{D}\right)
\cap B_{2}^{0}\left( \mathbb{D}\right) $ of bounded Dirichlet space functions
was shown to have the Corona Property by Nicolau \cite{Nic}, who used the
difficult theory of best approximation in $VMO$ due to Peller and Hruscev 
\cite{PeHr}.

Finally, in the third subsection, we demonstrate that there are many Hilbert
function spaces to which our alternate Toeplitz corona theorem applies, that
are \emph{not} spaces of holomorphic functions.

\subsection{The kernel multiplier algebras for Besov-Sobolev spaces}

We begin by extending some of the background material developed in \cite%
{ArRoSa2} for the Besov spaces $B_{p}\left( \mathbb{B}_{n}\right) $ to the
Besov-Sobolev spaces $B_{p}^{\sigma }\left( \mathbb{B}_{n}\right) $, $\sigma
\geq 0$. Some of this material appears in \cite{ArRoSa3}.

\subsubsection{Besov-Sobolev spaces}

Recall the invertible \textquotedblleft radial\textquotedblright\ operators $%
R^{\gamma ,t}:H\left( \mathbb{B}_{n}\right) \rightarrow H\left( \mathbb{B}%
_{n}\right) $ given in \cite{Zhu} by 
\begin{equation*}
R^{\gamma ,t}f\left( z\right) =\sum_{k=0}^{\infty }\frac{\Gamma \left(
n+1+\gamma \right) \Gamma \left( n+1+k+\gamma +t\right) }{\Gamma \left(
n+1+\gamma +t\right) \Gamma \left( n+1+k+\gamma \right) }f_{k}\left(
z\right) ,
\end{equation*}%
provided neither $n+\gamma $ nor $n+\gamma +t$ is a negative integer, and
where $f\left( z\right) =\sum_{k=0}^{\infty }f_{k}\left( z\right) $ is the
homogeneous expansion of $f$. If the inverse of $R^{\gamma ,t}$ is denoted $%
R_{\gamma ,t}$, then Proposition 1.14 of \cite{Zhu} yields 
\begin{eqnarray}
R^{\gamma ,t}\left( \frac{1}{\left( 1-\overline{w}\cdot z\right)
^{n+1+\gamma }}\right) &=&\frac{1}{\left( 1-\overline{w}\cdot z\right)
^{n+1+\gamma +t}},  \label{Zhuidentity} \\
R_{\gamma ,t}\left( \frac{1}{\left( 1-\overline{w}\cdot z\right)
^{n+1+\gamma +t}}\right) &=&\frac{1}{\left( 1-\overline{w}\cdot z\right)
^{n+1+\gamma }},  \notag
\end{eqnarray}%
for all $w\in \mathbb{B}_{n}$. Thus for any $\gamma $, $R^{\gamma ,t}$ is
approximately differentiation of order $t$. From Theorem 6.1 and Theorem 6.4
of \cite{Zhu} we have that the derivatives $R^{\gamma ,m}f\left( z\right) $
are \textquotedblleft $L^{p}$ norm equivalent\textquotedblright\ to $%
\sum_{k=0}^{m-1}\left\vert f^{\left( k\right) }\left( 0\right) \right\vert
+f^{\left( m\right) }\left( z\right) $ for $m$ large enough.

\begin{proposition}
\label{Bequiv}(analogue of Theorem 6.1 and Theorem 6.4 of \cite{Zhu})
Suppose that $0<p<\infty $, $0\leq \sigma <\infty $, $n+\gamma $ is not a
negative integer, and $f\in H\left( \mathbb{B}_{n}\right) $. Then the
following four conditions are equivalent: 
\begin{eqnarray*}
\left( 1-\left\vert z\right\vert ^{2}\right) ^{m+\sigma }f^{\left( m\right)
}\left( z\right) &\in &L^{p}\left( d\lambda _{n}\right) \,\text{for \emph{%
some }}m+\sigma >\frac{n}{p},m\in \mathbb{N}, \\
\left( 1-\left\vert z\right\vert ^{2}\right) ^{m+\sigma }f^{\left( m\right)
}\left( z\right) &\in &L^{p}\left( d\lambda _{n}\right) \text{ for \emph{all 
}}m+\sigma >\frac{n}{p},m\in \mathbb{N}, \\
\left( 1-\left\vert z\right\vert ^{2}\right) ^{m+\sigma }R^{\gamma
,m}f\left( z\right) &\in &L^{p}\left( d\lambda _{n}\right) \text{ for \emph{%
some }}m+\sigma >\frac{n}{p},m+n+\gamma \notin -\mathbb{N}, \\
\left( 1-\left\vert z\right\vert ^{2}\right) ^{m+\sigma }R^{\gamma
,m}f\left( z\right) &\in &L^{p}\left( d\lambda _{n}\right) \,\text{for \emph{%
all }}m+\sigma >\frac{n}{p},m+n+\gamma \notin -\mathbb{N}.
\end{eqnarray*}%
Moreover, with $\rho \left( z\right) =1-\left\vert z\right\vert ^{2}$, we
have for $1<p<\infty $, 
\begin{equation}
C^{-1}\left\Vert \rho ^{m_{1}+\sigma }R^{\gamma ,m_{1}}f\right\Vert
_{L^{p}\left( d\lambda _{n}\right) }\leq \sum_{k=0}^{m_{2}-1}\left\vert
f^{\left( k\right) }\left( 0\right) \right\vert +\left( \int_{\mathbb{B}%
_{n}}\left\vert \left( 1-\left\vert z\right\vert ^{2}\right) ^{m_{2}+\sigma
}f^{m_{2}}\left( z\right) \right\vert ^{p}d\lambda _{n}\left( z\right)
\right) ^{\frac{1}{p}}\leq C\left\Vert \rho ^{m_{1}+\sigma }R^{\gamma
,m_{1}}f\right\Vert _{L^{p}\left( d\lambda _{n}\right) }  \label{normequiv}
\end{equation}%
for all $m_{1}+\sigma ,m_{2}+\sigma >\frac{n}{p}$, $m_{1}+n+\gamma \notin -%
\mathbb{N}$, $m_{2}\in \mathbb{N}$, and where the constant $C$ depends only
on $\sigma $, $m_{1}$, $m_{2}$, $n$, $\gamma $ and $p$.
\end{proposition}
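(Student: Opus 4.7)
The plan is to reduce the proposition to the $\sigma = 0$ case treated in Zhu's Theorems 6.1 and 6.4, augmented by the Forelli-Rudin estimates applied with the shifted weight $(1-|z|^2)^{p\sigma}\,d\lambda_n$. The identities (\ref{Zhuidentity}) express $R^{\gamma,t}$ as essentially fractional differentiation of order $t$ (acting diagonally on the Taylor expansion via the coefficients $c_{k,\gamma,t} = \Gamma(n+1+\gamma)\Gamma(n+1+k+\gamma+t)/[\Gamma(n+1+\gamma+t)\Gamma(n+1+k+\gamma)]$), so the composition $R^{\gamma_2, m_2} R_{\gamma_1, m_1}$ is an integral operator whose kernel can be computed explicitly by combining (\ref{Zhuidentity}) with the weighted Bergman reproducing formula.

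First, to compare two pairs $(\gamma_1, m_1)$ and $(\gamma_2, m_2)$ satisfying the stated hypotheses, I would write
\[
R^{\gamma_2, m_2} f = \bigl(R^{\gamma_2, m_2} R_{\gamma_1, m_1}\bigr) R^{\gamma_1, m_1} f \equiv T R^{\gamma_1, m_1} f,
\]
and apply Schur's test combined with the Forelli-Rudin lemma to show that $T$ is bounded and invertible between the weighted spaces $L^p(d\mu_{m_i + \sigma})$, where $d\mu_\tau = (1-|z|^2)^{p\tau - (n+1)}\,dV$. The hypothesis $m_i + \sigma > n/p$ together with $m_i + n + \gamma_i \notin -\mathbb{N}$ provides exactly the integrability exponents that Forelli-Rudin requires. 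This furnishes the equivalence of the two $R^{\gamma,m}$ conditions and the equivalence of their $L^p$ norms. Next, to pass between $R^{\gamma,m} f$ and $f^{(m)}$, I would use the diagonal action above: since $c_{k,\gamma,m} \sim k^m\,\Gamma(n+1+\gamma)/\Gamma(n+1+\gamma+m)$ as $k\to\infty$, the operator $R^{\gamma,m}$ agrees with $m$-fold differentiation up to (i) a polynomial of degree less than $m$, which accounts for the low-order term $\sum_{k=0}^{m_2-1}|f^{(k)}(0)|$ in (\ref{normequiv}), and (ii) a bounded correction on the relevant weighted $L^p$ space, again by Schur and Forelli-Rudin.

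The main obstacle is verifying the Schur-test estimates with the extra fractional weight $(1-|z|^2)^{p\sigma}$, since Zhu's original arguments are stated for the unweighted case. This is essentially bookkeeping, however, because Forelli-Rudin holds for arbitrary real exponents and the integrability thresholds $m + \sigma > n/p$ and $m + n + \gamma \notin -\mathbb{N}$ are preserved throughout the argument. Once both equivalences are in hand, the four listed conditions collapse to one, and the constants in (\ref{normequiv}) are obtained by tracking the Schur bounds for $T$ and its inverse. The ``for some'' versus ``for all'' dichotomy is then immediate from the bounded invertibility of $T$ between the respective weighted spaces.
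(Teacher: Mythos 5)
The paper gives no proof of Proposition \ref{Bequiv} at all --- it is asserted as an ``analogue of Theorem 6.1 and Theorem 6.4 of \cite{Zhu}'' --- and your sketch is precisely the standard argument behind those theorems (factor through $R^{\gamma_2,m_2}R_{\gamma_1,m_1}$ realized as an integral operator via the weighted Bergman reproducing formula, then Schur's test with Forelli--Rudin estimates) with the extra weight $(1-|z|^2)^{p\sigma}$ carried along, so it matches the approach the paper is implicitly relying on. The only points to watch are that the diagonal asymptotics $c_{k,\gamma,m}\sim k^m$ by themselves only relate $R^{\gamma,m}$ to the iterated \emph{radial} derivative, so the comparison with the full gradient $f^{(m)}=\nabla^m f$ still requires differentiating the reproducing formula under the integral and another Schur estimate (which your ``bounded correction'' clause gestures at), and that Schur's test covers only $1<p<\infty$, which is exactly the range in which the norm equivalence (\ref{normequiv}) is asserted.
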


\begin{definition}
We define the analytic Besov-Sobolev spaces $B_{p}^{\sigma }\left( \mathbb{B}%
_{n}\right) $ on the ball $\mathbb{B}_{n}$ by taking $\gamma =0$ and $m=%
\frac{n+1}{p}$ and setting 
\begin{equation}
B_{p}^{\sigma }=B_{p}^{\sigma }\left( \mathbb{B}_{n}\right) =\left\{ f\in
H\left( \mathbb{B}_{n}\right) :\left\| \rho ^{m+\sigma }R^{0,m}f\right\|
_{L^{p}\left( d\lambda _{n}\right) }<\infty \right\} .  \label{Besovspacedef}
\end{equation}
\end{definition}

We will indulge in the usual abuse of notation by using $\left\Vert
f\right\Vert _{B_{p}^{\sigma }\left( \mathbb{B}_{n}\right) }$ to denote any
of the norms appearing in (\ref{normequiv}).

\subsubsection{Reproducing kernels}

For $\alpha >-1$, let $\left\langle \cdot ,\cdot \right\rangle _{\alpha }$
denote the inner product for the weighted Bergman space $A_{\alpha }^{2}$: 
\begin{equation*}
\left\langle f,g\right\rangle _{\alpha }=\int_{\mathbb{B}_{n}}f\left(
z\right) \overline{g\left( z\right) }d\nu _{\alpha }\left( z\right)
,\;\;\;\;f,g\in A_{\alpha }^{2},
\end{equation*}%
where $d\nu _{\alpha }\left( z\right) =\left( 1-\left\vert z\right\vert
^{2}\right) ^{\alpha }dV(z)$. Recall that $K_{w}^{\alpha }\left( z\right)
=K^{\alpha }\left( z,w\right) =\left( 1-\overline{w}\cdot z\right)
^{-n-1-\alpha }$ is the reproducing kernel for $A_{\alpha }^{2}$ (Theorem
2.7 in \cite{Zhu}): 
\begin{equation*}
f\left( w\right) =\left\langle f,K_{w}^{\alpha }\right\rangle _{\alpha
}=\int_{\mathbb{B}_{n}}f\left( z\right) \overline{K_{w}^{\alpha }\left(
z\right) }d\nu _{\alpha }\left( z\right) ,\;\;\;\;\;f\in A_{\alpha }^{2}.
\end{equation*}%
This formula continues to hold as well for $f\in A_{\alpha }^{p}$, $%
1<p<\infty $, since the polynomials are dense in $A_{\alpha }^{p}$.

The proof of Corollary 6.5 of \cite{Zhu} shows that $R^{\gamma ,\frac{%
n+1+\alpha }{p}-\sigma }$ is a bounded invertible operator from $%
B_{p}^{\sigma }$ onto $A_{\alpha }^{p}$, provided that neither $n+\gamma $
nor $n+\gamma +\frac{n+1+\alpha }{p}-\sigma $ is a negative integer. It
turns out to be convenient to take $\gamma =\alpha -\frac{n+1+\alpha }{p}%
+\sigma $ here (with this choice we can explicitly compute certain formulas
- see (\ref{comp}) below), and thus we single out the special operators 
\begin{equation*}
\mathcal{R}_{t}^{\alpha }=R^{\alpha -t,t}.
\end{equation*}
Note that the operators $\mathcal{R}_{t}^{\alpha }$ and their inverses $%
\left( \mathcal{R}_{t}^{\alpha }\right) ^{-1}\mathcal{=}R_{\alpha -t,t}$ are
self-adjoint with respect to $\left\langle \cdot ,\cdot \right\rangle
_{\alpha }$ since the monomials are orthogonal with respect to $\left\langle
\cdot ,\cdot \right\rangle _{\alpha }$ (see (1.21) and (1.23) in \cite{Zhu}%
), and the operators act on the homogeneous expansion of $f$ by multiplying
the homogeneous coefficients of $f$ by certain positive constants. The next
definition is motivated by the fact that $\mathcal{R}_{\frac{n+1+\alpha }{p}%
-\sigma }^{\alpha }$ is a bounded invertible operator from $B_{p}^{\sigma }$
onto $A_{\alpha }^{p}$, and that $\mathcal{R}_{\frac{n+1+\alpha }{p^{\prime }%
}-\sigma }^{\alpha }$ is a bounded invertible operator from $B_{p^{\prime
}}^{\sigma }$ onto $A_{\alpha }^{p^{\prime }}$, provided that neither $%
n+\alpha $, $n+\alpha -\frac{n+1+\alpha }{p}+\sigma $ nor $n+\alpha -\frac{%
n+1+\alpha }{p^{\prime }}+\sigma $ is a negative integer. Note that this
proviso holds in particular for $\alpha >-1$, $\sigma \geq 0$.

\begin{definition}
For $\alpha >-1$, $\sigma \geq 0$ and $1<p<\infty $, we define a pairing $%
\left\langle \cdot ,\cdot \right\rangle _{\alpha ,p}^{\sigma }$ for $%
B_{p}^{\sigma }$ and $B_{p^{\prime }}^{\sigma }$ using $\left\langle \cdot
,\cdot \right\rangle _{\alpha }$ as follows: 
\begin{eqnarray*}
\left\langle f,g\right\rangle _{\alpha ,p}^{\sigma } &=&\left\langle 
\mathcal{R}_{\frac{n+1+\alpha }{p}-\sigma }^{\alpha }f,\mathcal{R}_{\frac{%
n+1+\alpha }{p^{\prime }}-\sigma }^{\alpha }g\right\rangle _{\alpha }=\int_{%
\mathbb{B}_{n}}\mathcal{R}_{\frac{n+1+\alpha }{p}-\sigma }^{\alpha }f\left(
z\right) \overline{\mathcal{R}_{\frac{n+1+\alpha }{p^{\prime }}-\sigma
}^{\alpha }g\left( z\right) }d\nu _{\alpha }\left( z\right) \\
&=&\int_{\mathbb{B}_{n}}\left\{ \left( 1-\left| z\right| ^{2}\right) ^{\frac{%
n+1+\alpha }{p}}\mathcal{R}_{\frac{n+1+\alpha }{p}-\sigma }^{\alpha }f\left(
z\right) \right\} \overline{\left\{ \left( 1-\left| z\right| ^{2}\right) ^{%
\frac{n+1+\alpha }{p^{\prime }}}\mathcal{R}_{\frac{n+1+\alpha }{p^{\prime }}%
-\sigma }^{\alpha }g\left( z\right) \right\} }d\lambda _{n}\left( z\right) .
\end{eqnarray*}
\end{definition}

Clearly we have 
\begin{equation*}
\left| \left\langle f,g\right\rangle _{\alpha ,p}^{\sigma }\right| \leq
\left\| f\right\| _{B_{p}^{\sigma }}\left\| g\right\| _{B_{p^{\prime
}}^{\sigma }}
\end{equation*}
by H\"{o}lder's inequality. By Theorem 2.12 of \cite{Zhu}, we also have that
every continuous linear functional $\Lambda $ on $B_{p}^{\sigma }$ is given
by $\Lambda f=\left\langle f,g\right\rangle _{\alpha ,p}^{\sigma }$ for a
unique $g\in B_{p^{\prime }}^{\sigma }$ satisfying 
\begin{equation}
\left\| g\right\| _{B_{p^{\prime }}^{\sigma }}=\sup_{\left\| f\right\|
_{B_{p}^{\sigma }}=1}\left| \left\langle f,g\right\rangle _{\alpha
,p}^{\sigma }\right| .  \label{attain}
\end{equation}
Indeed, if $\Lambda \in \left( B_{p}^{\sigma }\right) ^{*}$, then $\Lambda
\circ \left( \mathcal{R}_{\frac{n+1+\alpha }{p}-\sigma }^{\alpha }\right)
^{-1}\in \left( A_{\alpha }^{p}\right) ^{*}$, and by Theorem 2.12 of \cite%
{Zhu}, there is $G\in A_{\alpha }^{p^{\prime }}$ with $\left\| G\right\|
_{A_{\alpha }^{p^{\prime }}}=\left\| \Lambda \right\| $ such that $\Lambda
\circ \left( \mathcal{R}_{\frac{n+1+\alpha }{p}-\sigma }^{\alpha }\right)
^{-1}F=\left\langle F,G\right\rangle _{\alpha }$ for all $F\in A_{\alpha
}^{p}$. If we set $g=\left( \mathcal{R}_{\frac{n+1+\alpha }{p^{\prime }}%
-\sigma }^{\alpha }\right) ^{-1}G$, then we have $\left\| g\right\|
_{B_{p^{\prime }}^{\sigma }}=\left\| G\right\| _{A_{\alpha }^{p^{\prime
}}}=\left\| \Lambda \right\| $ and with $F=\mathcal{R}_{\frac{n+1+\alpha }{p}%
-\sigma }^{\alpha }f$, we also have 
\begin{equation*}
\Lambda f=\Lambda \circ \left( \mathcal{R}_{\frac{n+1+\alpha }{p}-\sigma
}^{\alpha }\right) ^{-1}F=\left\langle F,G\right\rangle _{\alpha
}=\left\langle \mathcal{R}_{\frac{n+1+\alpha }{p}-\sigma }^{\alpha }f,%
\mathcal{R}_{\frac{n+1+\alpha }{p^{\prime }}-\sigma }^{\alpha
}g\right\rangle _{\alpha }=\left\langle f,g\right\rangle _{\alpha
,p}^{\sigma }
\end{equation*}
for all $f\in B_{p}^{\sigma }$. Then (\ref{attain}) follows from 
\begin{equation*}
\left\| g\right\| _{B_{p^{\prime }}^{\sigma }}=\left\| \Lambda \right\|
=\sup_{\left\| f\right\| _{B_{p}^{\sigma }}=1}\left| \Lambda \left( f\right)
\right| =\sup_{\left\| f\right\| _{B_{p}^{\sigma }}=1}\left| \left\langle
f,g\right\rangle _{\alpha ,p}^{\sigma }\right| .
\end{equation*}

With $K_{w}^{\alpha }\left( z\right) $ the reproducing kernel for $A_{\alpha
}^{2}$, we now claim that the kernel 
\begin{equation}
k_{w}^{\sigma ,\alpha ,p}\left( z\right) =\left( \mathcal{R}_{\frac{%
n+1+\alpha }{p^{\prime }}-\sigma }^{\alpha }\right) ^{-1}\left( \mathcal{R}_{%
\frac{n+1+\alpha }{p}-\sigma }^{\alpha }\right) ^{-1}K_{w}^{\alpha }\left(
z\right)  \label{kerdef}
\end{equation}%
satisfies the following reproducing formula for $B_{p}^{\sigma }$: 
\begin{equation}
f\left( w\right) =\left\langle f,k_{w}^{\sigma ,\alpha ,p}\right\rangle
_{\alpha ,p}^{\sigma }=\int_{\mathbb{B}_{n}}\mathcal{R}_{\frac{n+1+\alpha }{p%
}-\sigma }^{\alpha }f\left( z\right) \overline{\mathcal{R}_{\frac{n+1+\alpha 
}{p^{\prime }}-\sigma }^{\alpha }k_{w}^{\sigma ,\alpha ,p}\left( z\right) }%
d\nu _{\alpha }\left( z\right) ,\;\;\;\;\;f\in B_{p}^{\sigma }.  \label{kw}
\end{equation}%
Indeed, for $f$ a polynomial, we have 
\begin{eqnarray*}
f\left( w\right) &=&\left\langle f,K_{w}^{\alpha }\right\rangle _{\alpha
}=\left\langle \left( \mathcal{R}_{\frac{n+1+\alpha }{p}-\sigma }^{\alpha
}\right) ^{-1}\mathcal{R}_{\frac{n+1+\alpha }{p}-\sigma }^{\alpha
}f,K_{w}^{\alpha }\right\rangle _{\alpha }=\left\langle \mathcal{R}_{\frac{%
n+1+\alpha }{p}-\sigma }^{\alpha }f,\left( \mathcal{R}_{\frac{n+1+\alpha }{p}%
-\sigma }^{\alpha }\right) ^{-1}K_{w}^{\alpha }\right\rangle _{\alpha } \\
&=&\left\langle \mathcal{R}_{\frac{n+1+\alpha }{p}-\sigma }^{\alpha }f,%
\mathcal{R}_{\frac{n+1+\alpha }{p^{\prime }}-\sigma }^{\alpha }\left( 
\mathcal{R}_{\frac{n+1+\alpha }{p^{\prime }}-\sigma }^{\alpha }\right)
^{-1}\left( \mathcal{R}_{\frac{n+1+\alpha }{p}-\sigma }^{\alpha }\right)
^{-1}K_{w}^{\alpha }\right\rangle _{\alpha } \\
&=&\left\langle f,\left( \mathcal{R}_{\frac{n+1+\alpha }{p^{\prime }}-\sigma
}^{\alpha }\right) ^{-1}\left( \mathcal{R}_{\frac{n+1+\alpha }{p}-\sigma
}^{\alpha }\right) ^{-1}K_{w}^{\alpha }\right\rangle _{\alpha ,p}^{\sigma }.
\end{eqnarray*}%
We now obtain the claim since the polynomials are dense in $B_{p}^{\sigma }$
and the kernels $k_{w}^{\sigma ,\alpha ,p}$ are in $B_{p^{\prime }}^{\sigma
} $ for each fixed $w\in \mathbb{B}_{n}$. Thus we have proved the following
theorem.

\begin{theorem}
\label{pairing}Let $1<p<\infty $, $\sigma \geq 0$ and $\alpha >-1$. Then the
dual space of $B_{p}^{\sigma }$ can be identified with $B_{p^{\prime
}}^{\sigma }$ under the pairing $\left\langle \cdot ,\cdot \right\rangle
_{\alpha ,p}^{\sigma }$, and the reproducing kernel $k_{w}^{\sigma ,\alpha
,p}$ for this pairing is given by (\ref{kerdef}).
\end{theorem}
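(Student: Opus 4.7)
The plan is to transfer both the duality and the reproducing kernel formula for the Besov-Sobolev space $B_{p}^{\sigma}$ from the corresponding statements for the weighted Bergman space $A_{\alpha}^{p}$, using the fact that $\mathcal{R}_{t}^{\alpha}$ is a bounded invertible ``radial derivative'' of order $t$, self-adjoint with respect to $\langle \cdot,\cdot\rangle_{\alpha}$, and that $\mathcal{R}_{(n+1+\alpha)/p-\sigma}^{\alpha}$ is a Banach space isomorphism from $B_{p}^{\sigma}$ onto $A_{\alpha}^{p}$ (and similarly for $p'$). The proviso $\alpha>-1$ and $\sigma\geq 0$ ensures that none of $n+\alpha$, $n+\alpha-\tfrac{n+1+\alpha}{p}+\sigma$, $n+\alpha-\tfrac{n+1+\alpha}{p'}+\sigma$ is a negative integer, so the operators $\mathcal{R}_{t}^{\alpha}$ and their inverses are well-defined throughout.

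For the duality, given $\Lambda\in (B_{p}^{\sigma})^{*}$, the composition $\Lambda\circ(\mathcal{R}_{(n+1+\alpha)/p-\sigma}^{\alpha})^{-1}$ is a continuous linear functional on $A_{\alpha}^{p}$. Theorem 2.12 of Zhu gives a unique $G\in A_{\alpha}^{p'}$ such that this functional equals $\langle\cdot,G\rangle_{\alpha}$, with $\|G\|_{A_{\alpha}^{p'}}=\|\Lambda\|$. Then I would set $g=(\mathcal{R}_{(n+1+\alpha)/p'-\sigma}^{\alpha})^{-1}G\in B_{p'}^{\sigma}$, and chase the definitions to verify that $\Lambda f=\langle f,g\rangle_{\alpha,p}^{\sigma}$ for all $f\in B_{p}^{\sigma}$, together with the norm identity \eqref{attain}. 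Uniqueness and the bound $|\langle f,g\rangle_{\alpha,p}^{\sigma}|\leq\|f\|_{B_{p}^{\sigma}}\|g\|_{B_{p'}^{\sigma}}$ follow from H\"{o}lder applied to the definition of the pairing.

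For the reproducing kernel, I would first verify the formula on polynomials, which are dense in $B_{p}^{\sigma}$. Starting from $f(w)=\langle f,K_{w}^{\alpha}\rangle_{\alpha}$ in $A_{\alpha}^{2}$, I insert the identity $(\mathcal{R}_{(n+1+\alpha)/p-\sigma}^{\alpha})^{-1}\mathcal{R}_{(n+1+\alpha)/p-\sigma}^{\alpha}$ before $f$, move the outer operator to the right using the self-adjointness of $\mathcal{R}_{t}^{\alpha}$ relative to $\langle\cdot,\cdot\rangle_{\alpha}$, then likewise insert $\mathcal{R}_{(n+1+\alpha)/p'-\sigma}^{\alpha}(\mathcal{R}_{(n+1+\alpha)/p'-\sigma}^{\alpha})^{-1}$ and again transfer to the right. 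This rewrites the inner product as
\begin{equation*}
f(w)=\bigl\langle \mathcal{R}_{\tfrac{n+1+\alpha}{p}-\sigma}^{\alpha}f,\ \mathcal{R}_{\tfrac{n+1+\alpha}{p'}-\sigma}^{\alpha}\bigl[(\mathcal{R}_{\tfrac{n+1+\alpha}{p'}-\sigma}^{\alpha})^{-1}(\mathcal{R}_{\tfrac{n+1+\alpha}{p}-\sigma}^{\alpha})^{-1}K_{w}^{\alpha}\bigr]\bigr\rangle_{\alpha},
\end{equation*}
which is exactly $\langle f,k_{w}^{\sigma,\alpha,p}\rangle_{\alpha,p}^{\sigma}$ with $k_{w}^{\sigma,\alpha,p}$ defined as in \eqref{kerdef}. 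Extension to arbitrary $f\in B_{p}^{\sigma}$ follows from density of polynomials together with the continuity of the functional $f\mapsto\langle f,k_{w}^{\sigma,\alpha,p}\rangle_{\alpha,p}^{\sigma}$ and the continuity of point evaluation at $w$ (which in turn comes from the fact that $k_{w}^{\sigma,\alpha,p}\in B_{p'}^{\sigma}$, itself verifiable from $K_{w}^{\alpha}\in A_{\alpha}^{p'}$ and the mapping properties of the inverses of $\mathcal{R}_{t}^{\alpha}$).

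The main obstacle is simply the careful bookkeeping: tracking the parameters so that every invocation of $\mathcal{R}_{t}^{\alpha}$ is legitimate, confirming that the chosen $t=\frac{n+1+\alpha}{p}-\sigma$ actually makes $\mathcal{R}_{t}^{\alpha}$ an isomorphism $B_{p}^{\sigma}\to A_{\alpha}^{p}$ (this uses Proposition \ref{Bequiv} together with Corollary 6.5 of Zhu), and ensuring self-adjointness is applied only when both sides of the pairing make sense, which requires justifying the insertion trick first on polynomials before extending by density. There is no deep content beyond Zhu's duality theorem for $A_{\alpha}^{p}$ and the algebraic identities between $\mathcal{R}_{t}^{\alpha}$ and its inverse; the rest is definitional.
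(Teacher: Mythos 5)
Your proposal follows essentially the same route as the paper: transfer Zhu's duality theorem for $A_{\alpha }^{p}$ through the isomorphism $\mathcal{R}_{\frac{n+1+\alpha }{p}-\sigma }^{\alpha }$, define $g=\left( \mathcal{R}_{\frac{n+1+\alpha }{p^{\prime }}-\sigma }^{\alpha }\right) ^{-1}G$, and verify the reproducing formula on polynomials by inserting $\left( \mathcal{R}\right) ^{-1}\mathcal{R}$ and using self-adjointness with respect to $\left\langle \cdot ,\cdot \right\rangle _{\alpha }$, then extend by density using that $k_{w}^{\sigma ,\alpha ,p}\in B_{p^{\prime }}^{\sigma }$. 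The argument is correct and matches the paper's proof in all essentials.
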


From (\ref{kerdef}) and (\ref{Zhuidentity}) we have 
\begin{eqnarray}
\mathcal{R}_{\frac{n+1+\alpha }{p^{\prime }}-\sigma }^{\alpha }k_{w}^{\sigma
,\alpha ,p}\left( z\right) &=&\left( \mathcal{R}_{\frac{n+1+\alpha }{p}%
-\sigma }^{\alpha }\right) ^{-1}K_{w}^{\alpha }\left( z\right)
\label{comput} \\
&=&R_{\alpha -\frac{n+1+\alpha }{p}+\sigma ,\frac{n+1+\alpha }{p}-\sigma
}\left( \left( 1-\overline{w}\cdot z\right) ^{-\left( n+1+\alpha \right)
}\right)  \notag \\
&=&\left( 1-\overline{w}\cdot z\right) ^{-\frac{n+1+\alpha }{p^{\prime }}%
-\sigma }.  \notag
\end{eqnarray}

\subsubsection{Kernel multiplier spaces}

Define the Banach space $K_{p}^{\sigma }\left( \mathbb{B}_{n}\right) $ of
kernel multipliers of $B_{p}^{\sigma }\left( \mathbb{B}_{n}\right) $ to
consist of those holomorphic functions in the ball $\mathbb{B}_{n}$ such
that 
\begin{equation*}
\left\Vert \varphi \right\Vert _{K_{p}^{\sigma }\left( \mathbb{B}_{n}\right)
}\equiv \sup_{a\in \mathbb{B}_{n}}\frac{\left\Vert \varphi k_{a}^{\sigma
,p^{\prime}}\right\Vert _{B_{p}^{\sigma }\left( \mathbb{B}_{n}\right) }}{\left\Vert
k_{a}^{\sigma ,p^{\prime}}\right\Vert _{B_{p}^{\sigma }\left( \mathbb{B}_{n}\right) }}%
<\infty ,
\end{equation*}%
where $k_{a}^{\sigma ,p^{\prime}}\left( z\right) $ is a reproducing kernel for $%
B_{p^{\prime}}^{\sigma }\left( \mathbb{B}_{n}\right) $, e.g., $k_{w}^{\sigma ,\alpha ,p^{\prime}}\left(
z\right) $ - any admissible choice of $\alpha $ can be used here. Standard
arguments using the reproducing kernel $k_{a}^{\sigma ,p^{\prime}}\left( z\right) $
show that $K_{p}^{\sigma }\left( \mathbb{B}_{n}\right) $ embeds in $%
H^{\infty }\left( \mathbb{B}_{n}\right) $.   Indeed, 

\begin{equation*}
\left\vert \varphi \left( a\right) \right\vert =\frac{1}{ k_{a}^{\sigma,\alpha, p^{\prime}}\left(
a\right) }\left\vert \left\langle \varphi k_{a}^{\sigma, \alpha, p^{\prime}},k_{a}^{\sigma,\alpha, p}\right\rangle _{\alpha}\right\vert \approx \left\vert \left\langle \varphi \widetilde{k_{a}^{\sigma,\alpha,p^{\prime}}},%
\widetilde{k_{a}^{\sigma,\alpha,p}}\right\rangle _{\alpha}\right\vert\leq \left\Vert \varphi \right\Vert _{K_{%
\mathcal{H}}}\ ,
\end{equation*}
Above we have used (see \eqref{p bounds} below):
$$
\left\Vert k_{a}^{\sigma,\alpha,p^{\prime}} \right\Vert_{B^{p}_{\sigma}(\mathbb{B}_n)}\approx \frac{1}{(1-\left\vert a\right\vert^2)^{\sigma}}\approx \sqrt{k_a^{\sigma,\alpha,p^{\prime}}(a)}.
$$

Let $WC_{p}^{\sigma }\left( 
\mathbb{B}_{n}\right) $ consist of those holomorphic functions in $f\in
B_{p}^{\sigma }\left( \mathbb{B}_{n}\right) $ that satisfy the weak or
one-box $\sigma $-Carleson condition:%
\begin{eqnarray*}
WC_{p}^{\sigma }\left( \mathbb{B}_{n}\right) &\equiv &\left\{ f\in
B_{p}^{\sigma }\left( \mathbb{B}_{n}\right) :\left\Vert f\right\Vert
_{WC_{p}^{\sigma }\left( \mathbb{B}_{n}\right) }<\infty \right\} ; \\
\left\Vert f\right\Vert _{WC_{p}^{\sigma }\left( \mathbb{B}_{n}\right)
;m}^{p} &\equiv &\sup_{Q}\frac{1}{\left\vert Q\right\vert ^{\frac{p\sigma}{n} }}%
\int_{S\left( Q\right) }\left\vert \left( 1-\left\vert z\right\vert
^{2}\right) ^{m+\sigma }\nabla ^{m}f\left( z\right) \right\vert ^{p}d\lambda
_{n}\left( z\right) ,
\end{eqnarray*}%
and where $Q$ is a nonisotropic ball on the sphere (see e.g. \cite[page 65]%
{Rud2}) and $\left\vert Q\right\vert $ is its surface measure. Here $m>\frac{%
n}{p}-\sigma $ and as usual, we will see below that the norms $\left\Vert
f\right\Vert _{WC_{p}^{\sigma }\left( \mathbb{B}_{n}\right) ;m}^{p}$ are
equivalent provided $p\left( m+\sigma \right) >n$, and so we can drop the
dependence on $m$. But first we establish the following standard equivalence
for one-box Carleson measures.

\begin{lemma}
\label{one box equiv}For $d\mu $ a positive Borel measure on the ball $%
\mathbb{B}_{n}$, $\sigma >0$ and $1<p<\infty $, we have 
\begin{equation*}
\label{wcm}
\left\Vert \mu\right\Vert_{WCM_{p}^{\sigma}}\equiv \sup_{a\in \mathbb{B}_{n}}\left( \int_{\mathbb{B}_{n}}\left\vert\widetilde{
k_{a}^{\sigma ,p^{\prime}}\left( z\right)} \right\vert ^{p}d\mu \left( z\right)
\right) ^{\frac{1}{p}}\approx \sup_{Q\subset \partial \mathbb{B}_{n}}\frac{%
\left( \int_{S\left( Q\right) }d\mu \left( z\right) \right) ^{\frac{1}{p}}}{%
\left\vert  Q \right\vert ^{\frac{\sigma}{n} }}.
\end{equation*}
\end{lemma}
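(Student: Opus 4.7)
The plan is to prove this equivalence by a standard Carleson measure argument, using the pointwise size and decay of the normalized kernel $\widetilde{k_a^{\sigma,p'}}$ on the unit ball. The first step is to derive, from the explicit kernel computation behind (\ref{comput}) together with the normalization $\sqrt{k_a^{\sigma,\alpha,p'}(a)}\approx (1-|a|^2)^{-\sigma}$ noted just above the lemma, a pointwise asymptotic of the schematic form
\[
\bigl|\widetilde{k_a^{\sigma,p'}}(z)\bigr|^{p}\;\approx\;\frac{(1-|a|^{2})^{N}}{|1-\bar a\cdot z|^{N+p\sigma}},
\]
for an appropriate positive exponent $N$ forced by the normalization. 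The key qualitative features are: on the Carleson tent $S(Q_a)$ at $a$ (where $Q_a$ is the nonisotropic ball centered at $a/|a|$ of radius $1-|a|$), both $1-|a|^2$ and $|1-\bar a\cdot z|$ are comparable to $1-|a|$, so $|\widetilde{k_a^{\sigma,p'}}(z)|^p\approx 1/|Q_a|^{p\sigma/n}$; while on distant tents the kernel decays polynomially.

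For the direction $\|\mu\|_{WCM_p^\sigma}\gtrsim \sup_Q\mu(S(Q))^{1/p}/|Q|^{\sigma/n}$, I would fix a nonisotropic ball $Q\subset \partial \mathbb{B}_n$ with center $\zeta$ and radius $\delta$ (so $|Q|\approx \delta^n$), and test at $a_Q=(1-\delta)\zeta$. For $z\in S(Q)$ one has the uniform comparisons $1-|a_Q|^2\approx \delta$ and $|1-\overline{a_Q}\cdot z|\approx \delta$, so the pointwise estimate yields $|\widetilde{k_{a_Q}^{\sigma,p'}}(z)|^p\gtrsim 1/|Q|^{p\sigma/n}$ on $S(Q)$, and integrating this lower bound against $d\mu$ produces the desired $\mu(S(Q))/|Q|^{p\sigma/n}$.

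For the reverse direction, given $a\in \mathbb{B}_n$ write $a=(1-\delta_0)\zeta_0$ with $\zeta_0\in \partial\mathbb{B}_n$, and decompose $\mathbb{B}_n$ dyadically as a Whitney-type family of shells $S(Q_k)\setminus S(Q_{k-1})$, where $Q_k$ is the nonisotropic ball centered at $\zeta_0$ of radius $2^k\delta_0$ (truncated once $2^k\delta_0\approx 1$, at which stage a single tent covers the rest). On the $k$-th shell $|1-\bar a\cdot z|\approx 2^k\delta_0$ uniformly, so the pointwise estimate gives $|\widetilde{k_a^{\sigma,p'}}|^p\approx 2^{-k(N+p\sigma)}/|Q_0|^{p\sigma/n}$; combined with the hypothesis $\mu(S(Q_k))\leq C^p|Q_k|^{p\sigma/n}=C^p 2^{kp\sigma}|Q_0|^{p\sigma/n}$, this bounds the contribution of shell $k$ by a constant multiple of $C^p\,2^{-kN}$, and summing the resulting geometric series gives the desired bound by $C^p$.

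The main obstacle is pinning down the sharp pointwise estimate for $\widetilde{k_a^{\sigma,p'}}$ from the abstract formula (\ref{kerdef}), because the composition of the two inverse radial operators does not preserve the monomial family $(1-\bar w\cdot z)^{-s}$ unless $p=2$. The standard workaround is to choose $\alpha$ large enough that both derivative orders $(n+1+\alpha)/p-\sigma$ and $(n+1+\alpha)/p'-\sigma$ are positive, realize each inverse radial operator as integration against a weighted Bergman kernel, and then collapse the resulting iterated integral using the classical asymptotic for $\int_{\mathbb{B}_n}(1-|z|^2)^\beta|1-\bar a\cdot z|^{-\gamma}\,dv(z)$ when $\gamma>n+1+\beta$, to obtain a single closed-form power of $|1-\bar a\cdot z|$ up to equivalence.
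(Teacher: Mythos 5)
Your proposal follows essentially the same route as the paper's proof: the two-sided pointwise estimate $|\widetilde{k_a^{\sigma,p'}}(z)|\approx (1-|a|^2)^{\sigma}\,|1-\overline{a}\cdot z|^{-2\sigma}$ (your schematic form with $N=p\sigma$), testing at the point $a_Q$ over the tent for the lower bound on $\Vert\mu\Vert_{WCM_p^\sigma}$, and the dyadic-annuli decomposition combined with the one-box condition and a convergent geometric series for the upper bound. The only substantive difference is that you spell out how to rigorously derive the pointwise kernel bounds (large $\alpha$, integral representation of the inverse radial operators, Forelli--Rudin asymptotics), a step the paper handles heuristically by asserting that $(\mathcal{R}_t^{\alpha})^{-1}$ is ``essentially integration of order $t$.''
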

We will call measures $\mu$ that satisfy Lemma \ref{one box equiv} weak Carleson measures and norm them via either expression.  This can be contrasted with standard Carleson measures which are normed by:
$$
\left\Vert \mu\right\Vert_{CM_p^\sigma}=\sup_{\left\Vert f\right\Vert_{B_p^\sigma(\mathbb{B}_n)}=1} \left(\int_{\mathbb{B}_n} \left\vert f(z)\right\vert^{p}d\mu(z)\right)^{\frac{1}{p}}.
$$

\begin{proof}
The proof of Lemma \ref{one box equiv} is standard using the pointwise
bounds 
\begin{eqnarray}
\left\vert k_{a}^{\sigma ,p^{\prime}}\left( z\right) \right\vert &\gtrsim &\left( 
\frac{1}{1-\left\vert a\right\vert ^{2}}\right) ^{2\sigma },\ \ \ \ \ z\in
S_{a}\ ,  \label{p bounds} \\
\left\vert k_{a}^{\sigma ,p^{\prime}}\left( z\right) \right\vert &\lesssim
&\left\vert \frac{1}{1-\overline{a}\cdot z}\right\vert ^{2\sigma },\ \ \ \ \
z\in \mathbb{B}_{n}\ ,  \notag
\end{eqnarray}%
which follow from (\ref{comput}), $\mathcal{R}_{\frac{n+1+\alpha }{p}-\sigma }^{\alpha }k_{w}^{\sigma ,\alpha ,p^{\prime}}\left( z\right) =\frac{1}{%
\left( 1-\overline{w}\cdot z\right) ^{\frac{n+1+\alpha }{p}+\sigma
}}$, since $\mathcal{R}_{\frac{n+1+\alpha }{p}-\sigma }^{\alpha }$
is essentially differentiation of order $t=\frac{n+1+\alpha }{p}%
-\sigma $, and so 
\begin{equation*}
k_{w}^{\sigma ,\alpha ,p^{\prime}}\left( z\right) =\left( \mathcal{R}_{\frac{%
n+1+\alpha }{p}-\sigma }^{\alpha }\right) ^{-1}\frac{1}{\left( 1-%
\overline{w}\cdot z\right) ^{\frac{n+1+\alpha }{p}+\sigma }}%
\approx \frac{1}{\left( 1-\overline{w}\cdot z\right) ^{\frac{n+1+\alpha }{%
p}+\sigma -t}}=\frac{1}{\left( 1-\overline{w}\cdot z\right)
^{2\sigma }}.
\end{equation*}
From this we also have the use approximation that:
$$
\widetilde{k_{a}^{\sigma,p^{\prime}}}(z)\approx \frac{(1-\left\vert a\right\vert^2)^{\sigma}}{(1-\overline{a}\cdot z)^{2\sigma}}.
$$
To show the inequality $\gtrsim $ we simply use the first inequality in (\ref%
{p bounds}). Conversely, to show the inequality $\lesssim $, we break up the integral 
\begin{equation*}
\left\Vert \widetilde{k_{a}^{\sigma ,p^{\prime}}}\right\Vert _{L^{p}\left( \mu \right)
}^{p}=\int_{\mathbb{B}_{n}}\left\vert \widetilde{k_{a}^{\sigma ,p^{\prime}}}\left( z\right)
\right\vert ^{p}d\mu \left( z\right)
\end{equation*}%
into geometric annuli $2^{\ell +1}S_{a}\setminus 2^{\ell }S_{a}$, where $%
S_{a}$ is the usual Carleson box associated with $a\in \mathbb{B}_{n}$. Then
we complete the proof using the one box condition on the Carleson boxes $%
2^{\ell +1}S_{a}$ together with the second inequality in (\ref{p bounds}),
and then summing up a geometric series.
\end{proof}

Now we show that $K_{p}^{\sigma }\left( \mathbb{B}_{n}\right) =H^{\infty
}\left( \mathbb{B}_{n}\right) \cap WC_{p}^{\sigma }\left( \mathbb{B}%
_{n}\right) $ with comparable norms. The corresponding result for the
multiplier algebra $M_{B_{p}^{\sigma }\left( \mathbb{B}_{n}\right) }$ is due
to Ortega and Fabrega \cite[Theorem 3.7]{OrFa}, and the proof there carries
over almost verbatim for weak Carleson measures in place of Carleson
measures, which we provide for the ease of the reader. It follows as a corollary of this result that the weak Carleson
measure condition is independent of $m>\frac{n}{p}-\sigma $.

\begin{proposition}
Let $\varphi \in H^{\infty }\left( \mathbb{B}_{n}\right) \cap B_{p}^{\sigma
}\left( \mathbb{B}_{n}\right) $, $\sigma >0$ and $m+\sigma >\frac{n}{p}$.  Then $\varphi \in K_{p}^{\sigma }\left( 
\mathbb{B}_{n}\right) $ if and only if $\varphi\in WC_{p}^{\sigma}(\mathbb{B}_n)$ i.e.
\begin{equation*}
d\mu \left( z\right) \equiv \left\vert \left( 1-\left\vert z\right\vert
^{2}\right) ^{m+\sigma }\varphi ^{\left( m\right) }\left( z\right)
\right\vert ^{p}d\lambda _{n}\left( z\right)
\end{equation*}%
is a weak $B_{p}^{\sigma }\left( \mathbb{B}_{n}\right) $-Carleson measure on 
$\mathbb{B}_{n}$.
\end{proposition}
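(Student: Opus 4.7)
\emph{Proof plan.} We intend to adapt the argument of Ortega--Fabr\`ega \cite{OrFa} for multipliers of $B_p^\sigma(\mathbb{B}_n)$, replacing their Carleson measure condition by the weak Carleson measure condition of Lemma \ref{one box equiv} and testing on the reproducing kernels $\widetilde{k_a^{\sigma,p'}}$ rather than on arbitrary $f \in B_p^\sigma(\mathbb{B}_n)$. By Proposition \ref{Bequiv}, for any admissible $m$ we have the equivalence
\begin{equation*}
\|g\|_{B_p^\sigma(\mathbb{B}_n)}^p \approx \sum_{k=0}^{m-1}|g^{(k)}(0)|^p + \int_{\mathbb{B}_n}\bigl|(1-|z|^2)^{m+\sigma}\nabla^m g(z)\bigr|^p d\lambda_n(z),
\end{equation*}
so we can compute $\|\varphi \widetilde{k_a^{\sigma,p'}}\|_{B_p^\sigma}$ using ordinary derivatives. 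Applying the Leibniz rule yields
\begin{equation*}
\nabla^m\bigl(\varphi\,\widetilde{k_a^{\sigma,p'}}\bigr) \;=\; \sum_{j=0}^{m} c_{j,m}\,\nabla^{j}\varphi\cdot\nabla^{m-j}\widetilde{k_a^{\sigma,p'}},
\end{equation*}
and the proof reduces to estimating the resulting $m+1$ integrals.

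For the sufficiency direction, we assume $\varphi \in H^\infty(\mathbb{B}_n)$ and that $d\mu$ is a weak $B_p^\sigma$-Carleson measure. The extremal term $j=m$ is precisely $\int_{\mathbb{B}_n}|\widetilde{k_a^{\sigma,p'}}|^p d\mu$, which is bounded by $\|\mu\|_{WCM_p^\sigma}^p$ by Lemma \ref{one box equiv}. The extremal term $j=0$ is estimated by $\|\varphi\|_\infty^p\,\|\widetilde{k_a^{\sigma,p'}}\|_{B_p^\sigma}^p \lesssim \|\varphi\|_\infty^p$, using that $\widetilde{k_a^{\sigma,p'}}$ is normalized. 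The intermediate terms $0<j<m$ we will control via the pointwise kernel derivative estimate $|\nabla^{m-j}\widetilde{k_a^{\sigma,p'}}(z)| \lesssim (1-|z|^2)^{-(m-j)}|\widetilde{k_a^{\sigma,p'}}(z)|$, which converts them into integrals of the form
\begin{equation*}
\int_{\mathbb{B}_n}(1-|z|^2)^{p(j+\sigma)}|\nabla^j\varphi(z)|^p \,|\widetilde{k_a^{\sigma,p'}}(z)|^p\,d\lambda_n(z),
\end{equation*}
i.e.\ weak Carleson integrals against the lower-order derivative measures $d\mu_j(z) = |(1-|z|^2)^{j+\sigma}\nabla^j\varphi(z)|^p d\lambda_n(z)$. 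These intermediate measures are weak Carleson provided $d\mu = d\mu_m$ is, which follows from a Hardy--type / integration-by-parts inductive argument on $m$ applied to one-box integrals, together with the endpoint control $\varphi \in H^\infty$. Combining the three kinds of terms gives $\|\varphi\|_{K_p^\sigma}^p \lesssim \|\varphi\|_\infty^p + \|\mu\|_{WCM_p^\sigma}^p$.

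For the necessity direction, we assume $\varphi \in K_p^\sigma(\mathbb{B}_n)$ and isolate the $j=m$ term in the Leibniz expansion via the triangle inequality in $L^p(d\lambda_n)$, obtaining
\begin{equation*}
\int_{\mathbb{B}_n}|\widetilde{k_a^{\sigma,p'}}|^p d\mu \;\lesssim\; \|\varphi\,\widetilde{k_a^{\sigma,p'}}\|_{B_p^\sigma}^p + \sum_{j=0}^{m-1}\int_{\mathbb{B}_n}\bigl|(1-|z|^2)^{m+\sigma}\nabla^j\varphi\cdot\nabla^{m-j}\widetilde{k_a^{\sigma,p'}}\bigr|^p d\lambda_n.
\end{equation*}
The first term is bounded by $\|\varphi\|_{K_p^\sigma}^p$. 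The cross terms on the right are handled by the same pointwise kernel derivative estimate, reducing each to a weak Carleson integral against a lower order measure $d\mu_j$; these are controlled inductively, with the base case $j=0$ using $\varphi \in H^\infty$. Taking the supremum over $a$ and invoking Lemma \ref{one box equiv} then establishes the weak Carleson condition for $d\mu$.

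The main obstacle is the treatment of the intermediate Leibniz terms in both directions; the cleanest route is a self-improvement argument that peels off one derivative at a time using the kernel derivative estimate and absorbs the resulting lower-order derivative measures using Hardy's inequality on tents $S(Q)$. As a byproduct of establishing both directions we obtain that the weak Carleson measure condition is independent of the choice of $m$ with $m+\sigma > n/p$, so that the norm $\|\cdot\|_{WC_p^\sigma}$ is unambiguously defined.
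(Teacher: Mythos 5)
Your overall strategy --- a Leibniz expansion of $\nabla ^{m}\left( \varphi \,\widetilde{k_{a}^{\sigma ,p^{\prime }}}\right) $ with the extreme terms $j=m$ and $j=0$ handled by the weak Carleson condition and by $\left\Vert \varphi \right\Vert _{\infty }$ respectively --- has the right shape, but the step you lean on for the intermediate terms is a genuine gap. After the kernel derivative bound you reduce the term of order $j$ to a weak Carleson integral against $d\mu _{j}\left( z\right) =\left\vert \left( 1-\left\vert z\right\vert ^{2}\right) ^{j+\sigma }\nabla ^{j}\varphi \left( z\right) \right\vert ^{p}d\lambda _{n}\left( z\right) $, and you assert that these measures are weak Carleson ``provided $d\mu _{m}$ is,'' via an unspecified Hardy-type induction. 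This is false in general for small $j$: if $p\left( j+\sigma \right) \leq n$, the one-box integral $\int_{S\left( Q\right) }\left( 1-\left\vert z\right\vert ^{2}\right) ^{p\left( j+\sigma \right) -n-1}dV$ already diverges, so $d\mu _{1}$ fails the one-box condition even for $\varphi \left( z\right) =z_{1}$ (certainly a kernel multiplier) once $n$ is large. There is also a circularity: you claim the $m$-independence of the weak Carleson condition as a byproduct of the proposition, while your intermediate-term argument implicitly uses that independence as an input. The same defect infects your necessity direction, which, incidentally, the paper does not write out at all --- its proof establishes only the sufficiency direction.

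The paper avoids forming the lower-order derivative measures altogether by importing the Ortega--Fabr\`{e}ga machinery \cite{OrFa}: after their identity bounding $\left\Vert \left( I+R\right) ^{k}\left( \varphi k_{a}^{\sigma ,p^{\prime }}\right) \right\Vert $ by the good term plus cross terms $\left\Vert \left( R^{\ell }\varphi \right) \left( R^{m}k_{a}^{\sigma ,p^{\prime }}\right) \right\Vert $, the cross terms are estimated using only (i) $\varphi \in H^{\infty }\left( \mathbb{B}_{n}\right) $ and (ii) the single inequality $\int_{\mathbb{B}_{n}}\left\vert \left( I+R\right) ^{k}\varphi \left( w\right) \right\vert ^{p}\left( 1-\left\vert w\right\vert ^{2}\right) ^{\delta -1}\left\vert 1-\overline{w}\cdot z\right\vert ^{-\left( n+1+N\right) }dV\left( w\right) \lesssim \left( 1-\left\vert z\right\vert ^{2}\right) ^{\delta -1-N-kp}$, which is exactly the weak Carleson condition for the \emph{top-order} derivative tested on the kernel $\left( 1-\overline{w}\cdot z\right) ^{-\left( n+1+N\right) /p}$; no intermediate derivative of $\varphi $ is ever required to generate a Carleson measure. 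If you prefer to stay with ordinary gradients, the partial model is the paper's proof that $K_{p}^{\sigma }\left( \mathbb{B}_{n}\right) $ is an algebra: there the low-order factor is absorbed into $\left\Vert \varphi \right\Vert _{\infty }$ by a Cauchy estimate and $m$ is taken larger than $2n/p$ so that the surviving derivative order still exceeds $n/p-\sigma $; but even that device only handles half of your cross terms here, so some version of the Ortega--Fabr\`{e}ga kernel-testing estimate appears unavoidable.
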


\begin{proof}
Suppose that $\varphi \in H^{\infty }\left( \mathbb{B}_{n}\right) \cap
WC_{p}^{\sigma }\left( \mathbb{B}_{n}\right) $. In \cite{OrFa}, Ortega and
Fabrega use the notation $A_{\delta ,k}^{p}\left( \mathbb{B}_{n}\right) $ to
describe a space of holomorphic functions equivalent to $B_{p}^{\sigma
}\left( \mathbb{B}_{n}\right) $ when $\sigma =\frac{n+\delta }{p}-k$. They
define the norm on $A_{\delta ,k}^{p}\left( \mathbb{B}_{n}\right) $ by 
\begin{equation*}
\left\Vert f\right\Vert _{A_{\delta ,k}^{p}\left( \mathbb{B}_{n}\right)
}\equiv \left( \sum_{\left\vert \alpha \right\vert \leq k}\int_{\mathbb{B}%
_{n}}\left\vert D^{\alpha }f\left( z\right) \right\vert ^{p}\left(
1-\left\vert z\right\vert ^{2}\right) ^{\delta -1}dV\left( z\right) \right)
^{\frac{1}{p}}.
\end{equation*}%
They show (bottom of page 66 of \cite{OrFa}) that with $R$ the radial
derivative,%
\begin{equation}
\left\Vert \left( I+R\right) ^{k}\left( \varphi k_{a}^{\sigma ,p^{\prime}}\right)
\right\Vert _{A_{\delta ,0}^{p}\left( \mathbb{B}_{n}\right) }\lesssim
\left\Vert k_{a}^{\sigma ,p^{\prime}}\left( I+R\right) ^{k}\varphi \right\Vert
_{A_{\delta ,0}^{p}\left( \mathbb{B}_{n}\right) }+\sum_{\substack{ m+\ell
\leq k  \\ m>0}}\left\Vert \left( R^{\ell }\varphi \right) \left(
R^{m}k_{a}^{\sigma ,p^{\prime}}\right) \right\Vert _{A_{\delta ,0}^{p}\left( \mathbb{B%
}_{n}\right) }.  \label{OF}
\end{equation}%
Now the first term on the right side of (\ref{OF}) is controlled by the weak
Carleson norm $\left\Vert \varphi \right\Vert _{WC_{p}^{\sigma }\left( 
\mathbb{B}_{n}\right) }$ of $\varphi $. The needed estimates for the other
terms on the right hand side of (\ref{OF}) are obtained from the argument
used to prove Theorem 3.5 in \cite{OrFa} since only the weak Carleson
condition is needed here. Indeed we quote from the bottom of page 66 in \cite%
{OrFa}:\bigskip

``The estimates of the other terms can be obtained following the same
argument used to prove Theorem 3.5. Observe that the properties of $\varphi $
used in this proof were that $\varphi $ was a bounded holomorphic function
and that for $\delta -1-N-kp<0$%
\begin{equation*}
\int_{\mathbb{B}_{n}}\frac{\left\vert \left( I+R\right) ^{k}\varphi \left(
w\right) \right\vert ^{p}\left( 1-\left\vert w\right\vert ^{2}\right)
^{\delta -1}}{\left\vert 1-\overline{w}\cdot z\right\vert ^{n+1+N}}dV\left(
w\right) \lesssim \left( 1-\left\vert z\right\vert ^{2}\right) ^{\delta
-1-N-kp}.
\end{equation*}%
It is clear that they are consequences of $\varphi \in H^{\infty }\left( 
\mathbb{B}_{n}\right) $ and just testing the measure $d\mu $ on the function 
$\left( \frac{1}{1-\overline{w}\cdot z}\right) ^{\frac{n+1+N}{p}}$."

\bigskip

From this quote it is clear that we need only use the weak Carleson
condition when testing the measure $d\mu $ for the other terms on the right
side of (\ref{OF}).
\end{proof}

Now we turn to showing that $K_{p}^{\sigma }\left( \mathbb{B}_{n}\right) $
is an algebra for $\sigma >0$ and $1<p<\infty $.

\begin{theorem}
Let $\sigma >0$ and $1<p<\infty $. Then $K_{p}^{\sigma }\left( \mathbb{B}%
_{n}\right) $ is an algebra.
\end{theorem}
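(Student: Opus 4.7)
The plan is to exploit the characterization $K_p^\sigma(\mathbb{B}_n)=H^\infty(\mathbb{B}_n)\cap WC_p^\sigma(\mathbb{B}_n)$ established in the preceding proposition, which reduces the problem to showing that the intersection on the right is closed under multiplication. Since $H^\infty(\mathbb{B}_n)$ is trivially an algebra, the task boils down to proving that for $\varphi,\psi\in H^\infty\cap WC_p^\sigma$, the measure $d\mu_{\varphi\psi}(z)=|(1-|z|^2)^{m+\sigma}(\varphi\psi)^{(m)}(z)|^p\,d\lambda_n(z)$ is weak Carleson for some (equivalently every) $m$ with $m+\sigma>n/p$.

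First I would fix such an $m$ and apply Leibniz,
\begin{equation*}
(\varphi\psi)^{(m)}=\sum_{k=0}^{m}\binom{m}{k}\,\varphi^{(k)}\,\psi^{(m-k)},
\end{equation*}
and split the resulting Carleson integral into three types of terms. The two endpoint terms $k=0$ and $k=m$ are handled directly: for instance, the $k=0$ term contributes
\begin{equation*}
\int_{S(Q)}\bigl|(1-|z|^2)^{m+\sigma}\varphi(z)\psi^{(m)}(z)\bigr|^p\,d\lambda_n(z)\le\|\varphi\|_\infty^{\,p}\,\|\psi\|_{WC_p^\sigma(\mathbb{B}_n);m}^{\,p}\,|Q|^{p\sigma/n},
\end{equation*}
and symmetrically for $k=m$. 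The main work lies in the intermediate indices $0<k<m$.

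For these terms I would invoke the standard Cauchy-type pointwise estimate for bounded holomorphic functions on the ball, $|\nabla^j f(z)|\lesssim\|f\|_\infty(1-|z|^2)^{-j}$, applied to \emph{both} factors. Since the derivative notation $f^{(m)}$ refers to the gradient tensor used in Proposition~\ref{Bequiv}, one must be a little careful to bound the appropriate norms of the mixed derivatives, but the upshot is the pointwise estimate
\begin{equation*}
\bigl|(1-|z|^2)^{m+\sigma}\varphi^{(k)}(z)\psi^{(m-k)}(z)\bigr|\;\lesssim\;\|\varphi\|_\infty\,\|\psi\|_\infty\,(1-|z|^2)^{\sigma}.
\end{equation*}
Hence the corresponding Carleson integral is dominated by a constant multiple of $\|\varphi\|_\infty^p\|\psi\|_\infty^p\int_{S(Q)}(1-|z|^2)^{p\sigma-n-1}\,dV(z)$. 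Since $(1-|z|^2)\sim r$ on a Carleson box $S(Q)$ of height $r\sim|Q|^{1/n}$ and $|S(Q)|\sim r^{n+1}$, this last integral is $\lesssim r^{p\sigma}\sim |Q|^{p\sigma/n}$, which is exactly the desired weak Carleson bound.

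Combining all $m+1$ terms yields $\varphi\psi\in WC_p^\sigma(\mathbb{B}_n)$ with a norm bound of the form
\begin{equation*}
\|\varphi\psi\|_{K_p^\sigma(\mathbb{B}_n)}\;\lesssim\;\bigl(\|\varphi\|_\infty+\|\varphi\|_{WC_p^\sigma}\bigr)\bigl(\|\psi\|_\infty+\|\psi\|_{WC_p^\sigma}\bigr)\;\lesssim\;\|\varphi\|_{K_p^\sigma}\|\psi\|_{K_p^\sigma},
\end{equation*}
which, combined with $\varphi\psi\in H^\infty$, proves the algebra property. The main obstacle I anticipate is the bookkeeping for the intermediate Leibniz terms in several variables—making sure that the multi-dimensional version of the Cauchy estimate applies to exactly the derivative objects used in the definition of the Besov–Sobolev norm—but this is routine and does not involve any additional analytic input beyond the characterization already proved.
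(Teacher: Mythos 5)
Your overall strategy -- reduce via the characterization $K_{p}^{\sigma }=H^{\infty }\cap WC_{p}^{\sigma }$, apply Leibniz, and treat the endpoint and intermediate terms separately -- is the same as the paper's (the paper uses polarization $2fg=(f+g)^{2}-f^{2}-g^{2}$ rather than Leibniz on a general product, but that is inessential), and your handling of the endpoint terms $k=0,m$ is correct. However, your treatment of the intermediate terms $0<k<m$ has a genuine gap: applying the Cauchy estimate to \emph{both} factors leaves you with the bound $\|\varphi \|_{\infty }^{p}\|\psi \|_{\infty }^{p}\int_{S(Q)}(1-|z|^{2})^{p\sigma }\,d\lambda _{n}(z)=\|\varphi \|_{\infty }^{p}\|\psi \|_{\infty }^{p}\int_{S(Q)}(1-|z|^{2})^{p\sigma -n-1}\,dV(z)$, and this integral is comparable to $|Q|\int_{0}^{|Q|^{1/n}}t^{p\sigma -n-1}\,dt$, which is \emph{infinite} whenever $p\sigma \leq n$. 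Your computation "$\lesssim r^{p\sigma }$" is only valid when $p\sigma >n$, so the argument collapses precisely in the range $0<\sigma \leq n/p$, which contains the most important examples (e.g.\ the Drury--Arveson and Hardy scales for $p=2$). Discarding all derivative information from both factors simply throws away too much.

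The fix, which is what the paper does, is to take $m>\frac{2n}{p}$ (deliberately "twice as large as need be") and, for each intermediate term, apply the Cauchy estimate $|(1-|z|^{2})^{m-k}\nabla ^{m-k}\varphi (z)|\lesssim \|\varphi \|_{\infty }$ only to the factor carrying the \emph{smaller} derivative order (WLOG $m-k\leq k$), retaining the full weighted integral of the higher-order factor:
\begin{equation*}
\int_{S(Q)}\bigl|(1-|z|^{2})^{m-k}\nabla ^{m-k}\varphi \bigr|^{p}\,\bigl|(1-|z|^{2})^{k+\sigma }\nabla ^{k}\varphi \bigr|^{p}\,d\lambda _{n}\leq \|\varphi \|_{\infty }^{p}\int_{S(Q)}\bigl|(1-|z|^{2})^{k+\sigma }\nabla ^{k}\varphi \bigr|^{p}\,d\lambda _{n}\lesssim \|\varphi \|_{\infty }^{p}\,\|\varphi \|_{WC_{p}^{\sigma }}^{p}\,|Q|^{p\sigma /n}.
\end{equation*}
The last inequality uses that the $k$-th order expression is an equivalent norm for $WC_{p}^{\sigma }$, which requires $k+\sigma >\frac{n}{p}$; this is exactly where the choice $m>\frac{2n}{p}$ enters, since $k\geq \frac{m}{2}>\frac{n}{p}$. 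Without this asymmetric treatment of the two factors your proof only establishes the algebra property for $\sigma >n/p$.
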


\begin{proof}
Fix $m>\frac{2n}{p}$. Note this choice is twice as large as need be, and
this will play a role in the proof. To show that $K_{p}^{\sigma }\left( 
\mathbb{B}_{n}\right) $ is an algebra, it is enough by polarization, $%
2fg=\left( f+g\right) ^{2}-f^{2}-g^{2}$, to show that $\left\Vert \varphi
^{2}\right\Vert _{K_{p}^{\sigma }\left( \mathbb{B}_{n}\right) }\leq
\left\Vert \varphi \right\Vert _{K_{p}^{\sigma }\left( \mathbb{B}_{n}\right)
}^{2}$. Now $\left\Vert \varphi ^{2}\right\Vert _{\infty }\leq \left\Vert
\varphi \right\Vert _{\infty }^{2}$ for $\varphi \in K_{p}^{\sigma }\left( 
\mathbb{B}_{n}\right) $ and using Lemma \ref{one box equiv} it remains to
show that 
\begin{equation}
\left\Vert \varphi ^{2}\right\Vert _{WC_{p}^{\sigma }\left( \mathbb{B}%
_{n}\right) }\lesssim \left\Vert \varphi \right\Vert _{K_{p}^{\sigma }\left( 
\mathbb{B}_{n}\right) }^{2}.  \label{RTS}
\end{equation}%
We have%
\begin{equation*}
\left\Vert \varphi ^{2}\right\Vert _{WC_{p}^{\sigma }\left( \mathbb{B}%
_{n}\right) }=\sum_{k=0}^{m-1}\left\vert \nabla ^{k}\left( \varphi
^{2}\right) \left( 0\right) \right\vert +\left( \sup_{Q}\frac{1}{\left\vert
Q\right\vert ^{\frac{p\sigma}{n} }}\int_{S\left( Q\right) }\left\vert \left(
1-\left\vert z\right\vert ^{2}\right) ^{m+\sigma }\nabla ^{m}\left( \varphi
^{2}\right) \left( z\right) \right\vert ^{p}d\lambda _{n}\left( z\right)
\right) ^{\frac{1}{p}},
\end{equation*}%
and 
\begin{equation*}
\nabla ^{m}\left( \varphi ^{2}\right) \left( z\right)
=\sum_{k=0}^{m}c_{m,k}\left( \nabla ^{m-k}\varphi \left( z\right) \right)
\left( \nabla ^{k}\varphi \left( z\right) \right) .
\end{equation*}%
Now 
\begin{eqnarray*}
&&\left( \int_{S\left( Q\right) }\left\vert \left( 1-\left\vert z\right\vert
^{2}\right) ^{m}\nabla ^{m}\left( \varphi ^{2}\right) \left( z\right)
\right\vert ^{p}\left( 1-\left\vert z\right\vert ^{2}\right) ^{\sigma
p}d\lambda _{n}\left( z\right) \right) ^{\frac{1}{p}} \\
&\leq &C\left( \int_{S\left( Q\right) }\left\vert \left( 1-\left\vert
z\right\vert ^{2}\right) ^{m}\nabla ^{m}\varphi \left( z\right) \right\vert
^{p}\left\vert \varphi \left( z\right) \right\vert ^{p}\left( 1-\left\vert
z\right\vert ^{2}\right) ^{\sigma p}d\lambda _{n}\left( z\right) \right) ^{%
\frac{1}{p}} \\
&&+C\sum_{k=1}^{m-1}\left( \int_{S\left( Q\right) }\left\vert \left(
1-\left\vert z\right\vert ^{2}\right) ^{m-k}\nabla ^{m-k}\varphi \left(
z\right) \right\vert ^{p}\left\vert \left( 1-\left\vert z\right\vert
^{2}\right) ^{k}\nabla ^{k}\varphi \left( z\right) \right\vert ^{p}\left(
1-\left\vert z\right\vert ^{2}\right) ^{\sigma p}d\lambda _{n}\left(
z\right) \right) ^{\frac{1}{p}} \\
&&+C\left( \int_{S\left( Q\right) }\left\vert \varphi \left( z\right)
\right\vert ^{p}\left\vert \left( 1-\left\vert z\right\vert ^{2}\right)
^{m}\nabla ^{m}\varphi \left( z\right) \right\vert ^{p}\left( 1-\left\vert
z\right\vert ^{2}\right) ^{\sigma p}d\lambda _{n}\left( z\right) \right) ^{%
\frac{1}{p}}\equiv I+II+III.
\end{eqnarray*}%
Terms $I=III$ are easily controlled by%
\begin{equation*}
I\leq C\left\Vert \varphi \right\Vert _{\infty }\left( \int_{S\left(
Q\right) }\left\vert \left( 1-\left\vert z\right\vert ^{2}\right) ^{m+\sigma
}\nabla ^{m}\varphi \left( z\right) \right\vert ^{p}d\lambda _{n}\left(
z\right) \right) ^{\frac{1}{p}}\leq C\left\Vert \varphi \right\Vert _{\infty
}\left\Vert \varphi \right\Vert _{WC_{p}^{\sigma }\left( \mathbb{B}%
_{n}\right) }\left\vert Q\right\vert ^{\frac{\sigma}{n} }.
\end{equation*}%
As for term $II$, fix $1\leq k\leq m-1$ for the moment, and assume without
loss of generality that $k\geq m-k$. Then we can use Cauchy's estimate 
\begin{equation*}
\left\vert \left( 1-\left\vert z\right\vert ^{2}\right) ^{m-k}\nabla
^{m-k}\varphi \left( z\right) \right\vert \lesssim \left\Vert \varphi
\right\Vert _{\infty }
\end{equation*}%
to obtain%
\begin{eqnarray*}
&&\left( \int_{S\left( Q\right) }\left\vert \left( 1-\left\vert z\right\vert
^{2}\right) ^{m-k}\nabla ^{m-k}\varphi \left( z\right) \right\vert
^{p}\left\vert \left( 1-\left\vert z\right\vert ^{2}\right) ^{k}\nabla
^{k}\varphi \left( z\right) \right\vert ^{p}\left( 1-\left\vert z\right\vert
^{2}\right) ^{\sigma p}d\lambda _{n}\left( z\right) \right) ^{\frac{1}{p}} \\
&\leq &\left\Vert \varphi \right\Vert _{\infty }\left( \int_{S\left(
Q\right) }\left\vert \left( 1-\left\vert z\right\vert ^{2}\right) ^{k+\sigma
}\nabla ^{k}\varphi \left( z\right) \right\vert ^{p}d\lambda _{n}\left(
z\right) \right) ^{\frac{1}{p}}\leq \left\Vert \varphi \right\Vert _{\infty
}\left\Vert \varphi \right\Vert _{WC_{p}^{\sigma }\left( \mathbb{B}%
_{n}\right) }\left\vert Q\right\vert ^{\frac{\sigma}{n} }
\end{eqnarray*}%
since $k\geq m-k$ implies $k\geq \frac{m}{2}>\frac{n}{p}>\frac{n}{p}-\sigma $
(this is where we use $m>\frac{2n}{p}$). Altogether then we have%
\begin{equation*}
\left( \int_{S\left( Q\right) }\left\vert \left( 1-\left\vert z\right\vert
^{2}\right) ^{m+\sigma }\nabla ^{m}\left( \varphi ^{2}\right) \left(
z\right) \right\vert ^{p}d\lambda _{n}\left( z\right) \right) ^{\frac{1}{p}%
}\lesssim \left\Vert \varphi \right\Vert _{\infty }\left\Vert \varphi
\right\Vert _{WC_{p}^{\sigma }\left( \mathbb{B}_{n}\right) }\left\vert
Q\right\vert ^{\frac{\sigma}{n} },
\end{equation*}%
which gives (\ref{RTS}): 
\begin{equation*}
\left\Vert \varphi ^{2}\right\Vert _{WC_{p}^{\sigma }\left( \mathbb{B}%
_{n}\right) }\lesssim \left\Vert \varphi \right\Vert _{\infty }\left\Vert
\varphi \right\Vert _{WC_{p}^{\sigma }\left( \mathbb{B}_{n}\right) }\lesssim
\left\Vert \varphi \right\Vert _{K_{p}^{\sigma }\left( \mathbb{B}_{n}\right)
}^{2}.
\end{equation*}
\end{proof}

In particular, when $\mathcal{H}=B_{2}^{\sigma }\left( \mathbb{B}_{n}\right) 
$ and $\sigma >0$, the kernel multiplier space $K_{\mathcal{H}%
}=K_{2}^{\sigma }\left( \mathbb{B}_{n}\right) $ is an algebra.

\subsection{The Corona Property for kernel multiplier algebras on the disk}

Here we prove the Corona Property for the one-dimensional algebras of kernel
multipliers $K_{p}^{\sigma}(\mathbb{D})$ for $0<\sigma < \frac{1}{p}$, $1<p<\infty$.

\begin{theorem}
\label{corona for kernel multipliers}Let $N\geq 2$, $1<p<\infty$ and $0<\sigma < \frac{1}{p%
}$ and suppose that $\varphi _{1},\ldots ,\varphi _{N}\in K_{p}^{\sigma
}\left( \mathbb{D}\right) $ with norm at most one satisfy%
\begin{equation*}
\max \left\{ \left\vert \varphi _{1}\left( z\right) \right\vert
^{2},\ldots ,\left\vert \varphi _{N}\left( z\right) \right\vert ^{2}\right\}
\geq c>0,\ \ \ \ \ z\in \mathbb{D}.
\end{equation*}%
Then there are a positive constant $C$ and $f_{1},\ldots ,f_{N}\in
K_{p}^{\sigma }\left( \mathbb{D}\right) $ satisfying 
\begin{eqnarray*}
\max \left\{ \left\Vert f_{1}\right\Vert_{K_{p}^{\sigma}(\mathbb{D})},\ldots
, \left\Vert f_{N}\right\Vert_{K_{p}^{\sigma}(\mathbb{D})}\right\} &\leq &C,\ \ \ \ \
z\in \mathbb{D} , \\
\varphi _{1}\left( z\right) f_{1}\left( z\right) +\cdots +\varphi _{N}\left(
z\right) f_{N}\left( z\right) &=&1,\ \ \ \ \ z\in \mathbb{D} .
\end{eqnarray*}
\end{theorem}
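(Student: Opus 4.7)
The plan is to adapt the classical $\bar\partial$-equation approach via the Koszul complex, using Peter Jones' explicit solution operator \cite{Jo,Jo2} and tracking estimates at the level of the weak Carleson characterization of $K_p^\sigma(\mathbb{D})$ established in Subsection 7.1 rather than only the supremum norm. After normalizing so that $|\varphi(z)|^2 = \sum_{j=1}^N|\varphi_j(z)|^2 \geq c^2 > 0$, set $\psi_j = \bar\varphi_j/|\varphi|^2$; these are smooth bounded functions satisfying $\varphi\cdot\psi \equiv 1$ but are not holomorphic. Seek holomorphic corona solutions of the form
$$f_j = \psi_j + \sum_{k=1}^N \varphi_k\,h_{jk},$$
where the antisymmetric matrix $(h_{jk})$ solves
$$\bar\partial h_{jk} = \psi_j\,\bar\partial\psi_k - \psi_k\,\bar\partial\psi_j =: F_{jk}.$$
The identity $\sum_k\psi_k\varphi_k \equiv 1$ and antisymmetry of $(h_{jk})$ force $\bar\partial f_j = 0$ and $\varphi\cdot f = 1$ automatically. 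Since $K_p^\sigma(\mathbb{D})$ is an algebra containing the product $\varphi_k h_{jk}$ once $h_{jk}\in K_p^\sigma(\mathbb{D})$, and since the $\psi_j$ are directly controlled by the $\varphi_j$ and $c$, the problem reduces to solving each scalar $\bar\partial$-equation with $h_{jk}\in K_p^\sigma(\mathbb{D})$.

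Pointwise bounds yield $|\bar\partial\psi_k(z)|\lesssim c^{-2}|\varphi_k'(z)|$ and $\|\psi_j\|_\infty \leq c^{-1}$, so that
$$|F_{jk}(z)|\lesssim c^{-3}\bigl(\|\varphi_j\|_\infty |\varphi_k'(z)| + \|\varphi_k\|_\infty |\varphi_j'(z)|\bigr).$$
By the characterization $K_p^\sigma(\mathbb{D}) = H^\infty(\mathbb{D})\cap WC_p^\sigma(\mathbb{D})$ proved above with $m=1$ (admissible since $p(1+\sigma)>1$ when $p>1,\sigma>0$), each measure $|\varphi_\ell'(z)|^p(1-|z|^2)^{p(1+\sigma)-2}\,dA(z)$ is a weak $B_p^\sigma$-Carleson measure, and hence so is the analogous measure built from $F_{jk}$, with norm controlled by $c^{-3}$ and $\|\varphi\|_{\oplus^N K_p^\sigma}$. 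Let $T$ denote Peter Jones' explicit solution operator and put $h_{jk} := TF_{jk}$; the classical theorem yields $\|h_{jk}\|_\infty\leq C$ whenever $|F_{jk}|(1-|z|^2)\,dA$ is a Carleson measure. To upgrade from $H^\infty$ to $K_p^\sigma(\mathbb{D})$, one must verify the weak Carleson derivative bound
$$\int_{S(Q)}\bigl|(1-|z|^2)^{m+\sigma}\nabla^m h_{jk}(z)\bigr|^p\,d\lambda_1(z) \leq C|Q|^{p\sigma}$$
for some admissible $m$. Following the argument of Arcozzi, Blasi and Pau \cite{ArBlPa}, we differentiate Peter Jones' explicit kernel under the integral, pair $\nabla^m h_{jk}$ against the $B_{p'}^\sigma$-reproducing kernel $k_a^{\sigma,\alpha,p'}$ given by \eqref{comput}, and reduce the resulting iterated integral to a Schur/testing estimate driven by the weak Carleson norm of $F_{jk}$ together with the standard estimates for integrals of $|1-\bar wz|^{-s}$ on the disk.

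The technical heart of the argument is this last derivative estimate: showing that Peter Jones' operator preserves the weak-Carleson data at the level of $m$-th derivatives across the whole range $1<p<\infty$ and $0<\sigma<\tfrac{1}{p}$. The Arcozzi-Blasi-Pau approach in the Hilbert case $p=2$ exploits the reproducing kernel inner product and $L^2$ orthogonality; for general $p$ one must instead systematically use the $B_p^\sigma$-$B_{p'}^\sigma$ duality pairing $\langle\cdot,\cdot\rangle_{\alpha,p}^\sigma$ of Theorem \ref{pairing} and test against $k_a^{\sigma,\alpha,p'}$, with the decay $(1-|a|^2)^\sigma |1-\bar az|^{-2\sigma}$ from \eqref{comput} organizing the kernel bounds. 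The hypothesis $\sigma<\tfrac{1}{p}$ is used precisely to ensure the convergence of the relevant kernel integrals and the integrability of $(1-|z|^2)^{p\sigma}|k_a^{\sigma,p'}|^p$ against the weak Carleson measure $|F_{jk}|^p(1-|z|^2)^{p(1+\sigma)-2}\,dA$. Verifying the Schur test conditions uniformly in $a\in\mathbb{D}$ across this full range is the main analytic obstacle; once accomplished, the algebra property of $K_p^\sigma(\mathbb{D})$ yields $f_j\in K_p^\sigma(\mathbb{D})$ with the claimed uniform norm bound.
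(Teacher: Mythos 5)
Your architecture is the same as the paper's: the one-dimensional Koszul complex with $\psi_j=\overline{\varphi_j}/|\varphi|^2$ and antisymmetric corrections $h_{jk}$, the Peter Jones explicit $\overline{\partial}$-solution, and systematic replacement of Carleson by weak (one-box) Carleson conditions via the characterization $K_p^{\sigma}(\mathbb{D})=H^{\infty}(\mathbb{D})\cap WC_p^{\sigma}(\mathbb{D})$. Two points of divergence are worth flagging. First, the hypothesis $0<\sigma<\frac{1}{p}$ is not used where you place it (convergence of Schur/kernel integrals against $k_a^{\sigma,\alpha,p'}$); it enters earlier and more simply: the Jones $L^{\infty}$ bound requires $|F_{jk}|\,dA$ (not $|F_{jk}|(1-|z|^2)\,dA$) to be an $H^2(\mathbb{D})$-Carleson measure, and the paper deduces this from the weak Carleson condition on $|\varphi_\ell'|^p(1-|z|^2)^{p(1+\sigma)-2}dA$ by one H\"older estimate on Carleson boxes, where the exponent $\frac{-\sigma p-p+2}{p-1}>-1$ is exactly the condition $\sigma<\frac{1}{p}$. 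Second, the step you yourself identify as ``the main analytic obstacle'' --- propagating the weak Carleson derivative bound through Jones' operator --- is left as an unverified Schur test in your write-up, and this is the one place where your plan is genuinely incomplete. The paper closes it differently and more cheaply: it introduces an auxiliary function $v$ with the same boundary values as $zu$, invokes the Nicolau--Xiao pointwise estimate $|\nabla v(z)|\lesssim\int_{\mathbb{D}}|g(\omega)|\,|1-\overline{\omega}z|^{-2}\,dA(\omega)$, and then applies a weak-Carleson version of the spreading lemma (Lemma E of Arcozzi--Blasi--Pau) to conclude that $|\nabla v(z)|^p(1-|z|^2)^{p(\sigma+1)-2}dA(z)$ is again a weak Carleson measure; combined with the criterion relating $K(L^p_{p\sigma})$ membership of boundary values to this interior gradient condition, only first derivatives of the solution are ever estimated and no duality pairing or Schur test is needed. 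So: same route, but you should either carry out your Schur-test step in full or substitute the Nicolau--Xiao estimate plus spreading lemma, which is how the paper actually finishes.
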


The Corona Property for the multiplier algebras $M_{B_{p}^{\sigma }\left( 
\mathbb{D}\right) }$ was obtained by Arcozzi, Blasi and Pau in \cite{ArBlPa}%
\ using the Peter Jones solution to the $\overline{\partial}$-equation, and we now adapt
these methods to prove the Corona Property for the algebras $%
K_{p}^{\sigma }\left( \mathbb{D}\right) $ of kernel multipliers. The key
point is that in the arguments in \cite{ArBlPa}, which in turn are a generalization of the results from \cite{Xia2} for the case $p=2$, we are always able to
substitute the weak or one-box Carleson condition for the actual Carleson
condition.

As in \cite{ArBlPa} let\ $dA_{s}\left( z\right) =\left( 1-\left\vert
z\right\vert ^{2}\right) ^{s}dA\left( z\right) $ where $dA\left( z\right) $
is normalized area measure on the disk $\mathbb{D}$ (the connection between $%
s$ there and $\sigma $ here is $s=p\sigma $). If 
$\left\vert g\left( z\right) \right\vert ^{p}dA_{p(1+\sigma)-2 }\left( z\right) $
is a weak Carleson measure for $B_{p}^{\sigma }\left( \mathbb{D}\right) $
with $0<\sigma <\frac{1}{p}$, then Lemma 6.3 in \cite{ArBlPa}\ shows that $%
\left\vert g\left( z\right) \right\vert dA\left( z\right) $ is a Carleson
measure for $H^{2}\left( \mathbb{D}\right) =B_{2}^{\frac{1}{2}}\left( 
\mathbb{D}\right) $ (or in fact $H^p(\mathbb{D})$ since Carleson measures for Hardy space are all the same!). Indeed,%
\begin{eqnarray*}
\int_{S\left( I\right) }\left\vert g\left( z\right) \right\vert dA\left(
z\right) &=&\int_{S\left( I\right) }\left\vert g\left( z\right) \right\vert
\left( 1-\left\vert z\right\vert ^{2}\right) ^{\sigma+1-\frac{2}{p} }\left( 1-\left\vert
z\right\vert ^{2}\right) ^{-\sigma -1+\frac{2}{p}}dA\left( z\right) \\
&\leq &\left( \int_{S\left( I\right) }\left\vert g\left( z\right)
\right\vert ^{p}\left( 1-\left\vert z\right\vert ^{2}\right) ^{p(\sigma+1)-2
}dA\left( z\right) \right) ^{\frac{1}{p}}\left( \int_{S\left( I\right)
}\left( 1-\left\vert z\right\vert ^{2}\right) ^{-(\sigma+1)\frac{p}{p-1}+\frac{2}{p-1} }dA\left( z\right)
\right) ^{\frac{p-1}{p}} \\
&\lesssim &\left( \left\vert I\right\vert ^{p\sigma }\right) ^{\frac{1}{p}%
}\left( \left\vert I\right\vert ^{(1-\sigma)\frac{p}{p-1} }\right) ^{\frac{p-1}{p}%
}=\left\vert I\right\vert .
\end{eqnarray*}%
Now we obtain from the Peter Jones solution to the $\overline{\partial}$-equation the
following `weak' version of Theorem 6.4 in \cite{ArBlPa}.

\begin{theorem}
\label{weak Jones}Let $1<p<\infty$ and $0<\sigma <\frac{1}{p}$. If $\left\vert g\left(
z\right) \right\vert ^{p}dA_{p(\sigma+1)-2 }\left( z\right) $ is a weak Carleson
measure, then there is $f\in C\left( \mathbb{D}\right) $ satisfying%
\begin{equation*}
\frac{\partial f}{\partial \overline{z}}=g\text{ on }\mathbb{D}\text{ in the
sense of distributions},
\end{equation*}%
and such that $f$ has radial limits $f^{\ast }$ a.e. on $\mathbb{T}$ with $%
f^{\ast }\in K\left( L_{p\sigma}^{p}\right) $.
\end{theorem}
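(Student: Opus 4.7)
The plan is to invoke the Peter Jones explicit solution formula for the $\bar\partial$-equation in the disk and then adapt the Carleson-measure estimates of Arcozzi, Blasi and Pau from \cite{ArBlPa} by consistently replacing the full Carleson condition with the weak one. We have just established, via H\"older's inequality, that the weak Carleson hypothesis on $|g|^{p}\,dA_{p(\sigma+1)-2}$ implies $|g|\,dA$ is a Carleson measure for $H^{p}(\mathbb{D})$; Peter Jones's construction in \cite{Jo,Jo2} therefore yields directly a function $f\in C(\mathbb{D})$ solving $\bar\partial f=g$ in the distributional sense and possessing radial limits $f^{*}$ almost everywhere on $\mathbb{T}$. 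The substantive part of the theorem is then the boundary regularity $f^{*}\in K(L_{p\sigma}^{p})$.

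To verify this regularity, I would fix $a\in\mathbb{D}$ and estimate $\|\widetilde{k_{a}^{\sigma,p'}}\,f^{*}\|_{L^{p}(\mathbb{T})}$ uniformly in $a$. I insert the Peter Jones integral representation of $f$ in place of $f^{*}$, transfer the test function $\widetilde{k_{a}^{\sigma,p'}}$ inside, and apply Fubini to reduce the estimate to bounding an integral of $|g(w)|$ against a weight built from $\widetilde{k_{a}^{\sigma,p'}}$ and the Peter Jones kernel. The arguments of \cite[\S 6]{ArBlPa} then proceed essentially line by line with one substitution: wherever their proof tests an arbitrary unit-norm function in $B_{p}^{\sigma}(\mathbb{D})$ against the measure $|g|^{p}\,dA_{p(\sigma+1)-2}$, I restrict to testing only against the normalized reproducing kernels $\{\widetilde{k_{b}^{\sigma,p'}}\}_{b\in\mathbb{D}}$, which is exactly what the weak Carleson norm controls. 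The pointwise bound $|\widetilde{k_{a}^{\sigma,p'}}(z)|\lesssim (1-|a|^{2})^{\sigma}/|1-\bar{a}z|^{2\sigma}$ from the proof of Lemma \ref{one box equiv}, together with the standard geometric decomposition of $\mathbb{D}$ into annuli $2^{\ell+1}S_{a}\setminus 2^{\ell}S_{a}$, converts the remaining estimate into a sum of one-box integrals whose geometric series converges thanks to the constraint $0<\sigma<1/p$.

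The main obstacle I anticipate is not the interior regularity of $f$---which follows from standard Peter Jones theory once $|g|\,dA$ is known to be $H^{1}$-Carleson---but rather the precise calibration of the H\"older-type exponent matching that lets the weak-Carleson control transfer all the way through to the estimate on $\widetilde{k_{a}^{\sigma,p'}}\,f^{*}$ at the boundary. The exponent arithmetic in the preliminary H\"older calculation (producing Carleson data from weak Carleson data) already required $0<\sigma<1/p$, and one must check carefully that the same tuning continues to produce the correct geometric decay in the annular decomposition after convolution with the Peter Jones kernel; this is where the restriction $\sigma<1/p$ becomes essential for the entire chain of estimates.
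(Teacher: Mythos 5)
Your overall strategy (Peter Jones's explicit solution plus a systematic replacement of Carleson by weak Carleson conditions in the Arcozzi--Blasi--Pau scheme) is the same as the paper's, and the first half of your argument --- that the preliminary H\"older computation makes $\left\vert g\right\vert dA$ a Carleson measure for the Hardy space, so that the Jones formula produces $f\in C\left( \mathbb{D}\right) $ with $\overline{\partial }f=g$ and bounded radial limits --- matches the paper. The gap is in the boundary-regularity step. Membership in $K\left( L_{p\sigma }^{p}\right) $ is not controlled by $\left\Vert \widetilde{k_{a}^{\sigma ,p^{\prime }}}f^{\ast }\right\Vert _{L^{p}\left( \mathbb{T}\right) }$; by definition it requires $f^{\ast }\in L^{\infty }\left( \mathbb{T}\right) $ together with the \emph{weak Carleson} condition on the measure $\left\vert \nabla Pf^{\ast }\left( z\right) \right\vert ^{p}\left( 1-\left\vert z\right\vert ^{2}\right) ^{p\left( \sigma +1\right) -2}dA\left( z\right) $ built from the gradient of the \emph{Poisson extension} of the boundary values. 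Your plan of inserting the Jones representation into an $L^{p}\left( \mathbb{T}\right) $ pairing and applying Fubini therefore estimates the wrong quantity, and it cannot see the Sobolev-type part of the norm at all. Moreover, one cannot work directly with $Pu^{\ast }$, since the Jones solution $u$ is not harmonic and its boundary values are only known to be bounded.

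The paper resolves this with three ingredients that are absent from your proposal: (i) the criterion (\ref{weak criterion}), a weak-Carleson version of Lemma 4.5 of \cite{ArBlPa}, which says that for \emph{any} extension $F\in C^{1}\left( \mathbb{D}\right) \cap L^{\infty }\left( \mathbb{D}\right) $ the $K\left( L_{p\sigma }^{p}\right) $ norm of $F^{\ast }$ is controlled by $\left\Vert F^{\ast }\right\Vert _{L^{\infty }}$ plus the weak Carleson norm of $\left\vert \nabla F\right\vert ^{p}\left( 1-\left\vert z\right\vert ^{2}\right) ^{p\left( \sigma +1\right) -2}dA$; (ii) the introduction of the auxiliary function $v$, which has the same boundary values as $zu\left( z\right) $ but whose kernel has $\left\vert 1-\overline{\zeta }z\right\vert ^{2}$ in the denominator, so that the Nicolau--Xiao estimate $\left\vert \nabla v\left( z\right) \right\vert \lesssim \int_{\mathbb{D}}\left\vert g\left( \omega \right) \right\vert \left\vert 1-\overline{\omega }z\right\vert ^{-2}dA\left( \omega \right) $ holds, after which a weak version of the spreading Lemma E of \cite{ArBlPa} converts the hypothesis on $g$ into the required weak Carleson bound for $\left\vert \nabla v\right\vert ^{p}\left( 1-\left\vert z\right\vert ^{2}\right) ^{p\left( \sigma +1\right) -2}dA$; and (iii) the Jones trick with the exponential damping factor to show $v^{\ast }\in L^{\infty }\left( \mathbb{T}\right) $. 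Without an argument supplying these (or a genuine substitute for the passage from the non-harmonic Jones solution to a gradient estimate on a bounded $C^{1}$ extension), your proof of $f^{\ast }\in K\left( L_{p\sigma }^{p}\right) $ does not go through.
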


Here $L_{p}^{p\sigma}$ is the Sobolev space on the circle $\mathbb{T}$%
: for $f\in L^{p}\left( \mathbb{T}\right) $,%
\begin{equation*}
\left\Vert f\right\Vert _{L_{p\sigma}^{p}\left( \mathbb{T}\right) }^{2}=\left\Vert
f\right\Vert _{L^{p}\left( \mathbb{T}\right) }^{p}+\int_{\mathbb{T}}\int_{%
\mathbb{T}}\frac{\left\vert f\left( e^{it}\right) -f\left( e^{iu}\right)
\right\vert ^{p}}{\left\vert e^{it}-e^{iu}\right\vert ^{2-p\sigma }}%
dudt\approx \left\Vert f\right\Vert _{L^{p}\left( \mathbb{T}\right)
}^{p}+\int \int_{\mathbb{D}}\left\vert \nabla Pf\left( z\right) \right\vert
^{p}\left( 1-\left\vert z\right\vert ^{2}\right) ^{p(\sigma+1)-2 }dA\left(
z\right) ,
\end{equation*}%
where $Pf$ is the Poisson extension of $f$ to the disk, and where the second
summands on each line are actually comparable. The space $K\left(
L_{p\sigma}^{p}\right) $ is the subspace of $L^{\infty }\left( \mathbb{T}\right) $
for which $\left\vert \nabla Pf\left( z\right) \right\vert ^{p}\left(
1-\left\vert z\right\vert ^{2}\right) ^{p(\sigma+1)-2 }dA\left( z\right) $ is a
weak Carleson measure on $B_{p}^{\sigma }\left( \mathbb{D}\right) $ normed by%
\begin{equation*}
\left\Vert f\right\Vert _{K\left( L_{p\sigma}^{p}\right) }=\left\Vert f\right\Vert
_{L^{\infty }\left( \mathbb{T}\right) }+\left\Vert \left\vert \nabla
Pf\left( z\right) \right\vert ^{p}\left( 1-\left\vert z\right\vert
^{2}\right) ^{p(\sigma+1)-2 }dA\left( z\right) \right\Vert _{WCM_{p}^{\sigma }}\ .
\end{equation*}%
A straightforward `weak' modification of Lemma 4.5 in \cite{ArBlPa} shows
that if $F\in C^{1}\left( \mathbb{D}\right) \cap L^{\infty }\left( \mathbb{D}%
\right) $ has radial limits $F^{\ast }$ existing a.e. on $\mathbb{T}$, then%
\begin{equation}
\left\Vert F^{\ast }\right\Vert _{K\left( L_{p\sigma}^{p}\right) }\lesssim
\left\Vert F^{\ast }\right\Vert _{L^{\infty }\left( \mathbb{T}\right)
}+\left\Vert \left\vert \nabla F\left( z\right) \right\vert ^{p}\left(
1-\left\vert z\right\vert ^{2}\right) ^{p(\sigma+1)-2 }dA\left( z\right)
\right\Vert _{WCM_{p}^{\sigma }}\ .
\label{weak criterion}
\end{equation}%
Indeed, Lemma 4.5 proved the analogue%
\begin{equation*}
\left\Vert F^{\ast }\right\Vert _{M\left( L_{p\sigma}^{p}\right) }\lesssim
\left\Vert F^{\ast }\right\Vert _{L^{\infty }\left( \mathbb{T}\right)
}+\left\Vert \left\vert \nabla F\left( z\right) \right\vert ^{p}\left(
1-\left\vert z\right\vert ^{2}\right) ^{p(\sigma+1)-2 }dA\left( z\right)
\right\Vert _{CM_{p}^{\sigma }}
\end{equation*}%
where the norm on the left is the multiplier norm of $F^{\ast }$, which is
equivalent to%
\begin{equation*}
\left\Vert F^{\ast }\right\Vert _{L^{\infty }\left( \mathbb{T}\right)
}+\left\Vert \left\vert \nabla PF^{\ast }\left( z\right) \right\vert
^{p}\left( 1-\left\vert z\right\vert ^{2}\right) ^{p(\sigma+1)-2 }dA\left(
z\right) \right\Vert _{CM_{p}^{\sigma }}.
\end{equation*}%
The passage from Carleson norms to weak Carleson norms here is routine because one simply repeats the proof with a reproducing kernel in place of the generic function used in their proof.

\begin{proof}
To prove Theorem \ref{weak Jones}, we note that since $d\mu \left( z\right)
\equiv g\left( z\right) dA\left( z\right) $ is a Carleson measure for $%
H^{2}\left( \mathbb{D}\right) $, we can invoke the Jones solution with $\nu =%
\frac{\mu }{\left\Vert \mu \right\Vert _{H^{2}\left( \mathbb{D}\right) -Car}}
$: 
\begin{eqnarray*}
u\left( z\right) &=&\int \int_{\mathbb{D}}K\left( \nu ,z,\zeta \right) d\mu
\left( \zeta \right) ,\ \ \ \ \ z\in \overline{\mathbb{D}}, \\
K\left( \nu ,z,\zeta \right) &\equiv &\frac{2i}{\pi }\frac{1-\left\vert
\zeta \right\vert ^{2}}{\left( z-\zeta \right) \left( 1-\overline{\zeta }%
z\right) }\exp \left\{ \int \int_{\left\vert \omega \right\vert \geq
\left\vert \zeta \right\vert }\left[ -\frac{1+\overline{\omega }z}{1-%
\overline{\omega }z}+\frac{1+\overline{\omega }\zeta }{1-\overline{\omega }%
\zeta }\right] d\nu \left( \omega \right) \right\} .
\end{eqnarray*}%
This solution $u\left( z\right) $ to $\frac{\partial f}{\partial \overline{z}%
}=g$ on $\mathbb{D}$ satisfies $u^{\ast }\in L^{\infty }\left( \mathbb{T}%
\right) $. We now show that in addition $u^{\ast }\in K\left( L_{p\sigma
}^{p} \right) $. For this purpose we consider the
function%
\begin{eqnarray*}
v\left( z\right) &=&\frac{2i}{\pi }\int \int_{\mathbb{D}}\frac{1-\left\vert
\zeta \right\vert ^{2}}{\left\vert 1-\overline{\zeta }z\right\vert ^{2}}\exp
\left\{ \int \int_{\left\vert \omega \right\vert \geq \left\vert \zeta
\right\vert }\left[ -\frac{1+\overline{\omega }z}{1-\overline{\omega }z}+%
\frac{1+\overline{\omega }\zeta }{1-\overline{\omega }\zeta }\right]
\left\vert g\left( \omega \right) \right\vert dA\left( \omega \right)
\right\} g\left( \zeta \right) dA\left( \zeta \right) ,
\end{eqnarray*}%
which has the same boundary values as $zu\left( z\right) $. Since $%
e^{-i\theta }\in K\left( L_{p\sigma }^{p}\left( \mathbb{T}\right) \right) $,
it suffices by (\ref{weak criterion}) to show that $v^{\ast }\in L^{\infty
}\left( \mathbb{T}\right) $ and that $\left\vert \nabla v\left( z\right)
\right\vert ^{p}\left( 1-\left\vert z\right\vert ^{2}\right) ^{p(\sigma+1)-2
}dA\left( z\right) $ is a weak Carleson measure for $B_{p}^{\sigma }\left( 
\mathbb{D}\right) $.

We begin with an estimate of A. Nicolau and J. Xiao for this particular
function $v$, see for example \cite{NiXi}%
\begin{equation}
\left\vert \nabla v\left( z\right) \right\vert \leq C\int \int_{\mathbb{D}}%
\frac{\left\vert g\left( \omega \right) \right\vert }{\left\vert 1-\overline{%
\omega }z\right\vert ^{2}}dA\left( \omega \right) .  \label{Nic and X}
\end{equation}%
Since $\left\vert g\left( z\right) \right\vert ^{p}dA_{p(\sigma+1)-2 }\left(
z\right) $ is a weak Carleson measure for $B_{2}^{\sigma }\left( \mathbb{D}%
\right) $, it now follows from (\ref{Nic and X}) and a `weak' modification
of Lemma E in \cite{ArBlPa}, which is a Carleson spreading lemma of Arcozzi, Blasi
and Pau, that $\left\vert \nabla v\left( z\right) \right\vert ^{p}\left(
1-\left\vert z\right\vert ^{2}\right) ^{p(\sigma+1)-2 }dA\left( z\right) $ is a
weak Carleson measure for $B_{p}^{\sigma }\left( \mathbb{D}\right) $ (see for example the analogous lemmas in \cite{ArRoSa2} and \cite{CoSaWi2} and references given there). So it
only remains to show that $v^{\ast }\in L^{\infty }\left( \mathbb{T}\right) $%
. But this is a consequence of the Jones trick that uses first%
\begin{eqnarray*}
&&\func{Re}\left\{ \int \int_{\left\vert \omega \right\vert \geq \left\vert
\zeta \right\vert }\left[ -\frac{1+\overline{\omega }z}{1-\overline{\omega }z%
}+\frac{1+\overline{\omega }\zeta }{1-\overline{\omega }\zeta }\right]
\left\vert g\left( \omega \right) \right\vert dA\left( \omega \right)
\right\} \leq 2\int \int_{\mathbb{D}}\frac{1-\left\vert \zeta \right\vert
^{2}}{\left\vert 1-\overline{\zeta }z\right\vert ^{2}}\left\vert g\left(
\omega \right) \right\vert dA\left( \omega \right) \leq C,
\end{eqnarray*}%
and then for $z\in \mathbb{T}$ that%
\begin{equation*}
\left\vert v\left( z\right) \right\vert \lesssim \int \int_{\mathbb{D}}\frac{%
1-\left\vert \overline{\zeta }z\right\vert ^{2}}{\left\vert 1-\overline{%
\zeta }z\right\vert ^{2}}e^{-\int \int_{\left\vert \omega \right\vert \geq
\left\vert \zeta \right\vert }\frac{1-\left\vert \overline{\omega }%
z\right\vert ^{2}}{\left\vert 1-\overline{\omega }z\right\vert ^{2}}%
\left\vert g\left( \omega \right) \right\vert dA\left( \omega \right)
}\left\vert g\left( \zeta \right) \right\vert dA\left( \zeta \right)
\lesssim 1,
\end{equation*}%
which completes the proof of Theorem \ref{weak Jones}.
\end{proof}

With Theorem \ref{weak Jones} in hand, it is now a routine matter to use the
Koszul complex, or the simplified version in dimension $n=1$, to prove the
Corona Theorem \ref{corona for kernel multipliers} for the algebras $K_{p}^{\sigma }\left( \mathbb{D}\right) $ when $0<\sigma <\frac{1}{p}$.  This proof strategy for weak Carleson measures can be seen in \cite{CoSaWi2}.

\subsection{Bergman spaces of solutions to generalized Cauchy-Riemann
equations}

Let $A\left( z\right) $ and $B\left( z\right) $ be two smooth real-valued
symmetric invertible $n\times n$ matrices defined on a domain $\Omega $ in $%
\mathbb{C}^{n}=\mathbb{R}^{2n}$, where we identify $z=\left( z_{j}\right)
_{j=1}^{n}\in \mathbb{C}^{n}$ with $\left( \left( x_{j}\right)
_{j=1}^{n},\left( y_{j}\right) _{j=1}^{n}\right) \in \mathbb{R}^{n}\times $ $%
\mathbb{R}^{n}$ under the correspondence $z_{j}\leftrightarrow x_{j}+iy_{j}$%
. Then the complex first order partial differential operator $P\equiv
A\left( z\right) \nabla _{x}+iB\left( z\right) \nabla _{y}$ on $\mathbb{C}%
^{n}$ is elliptic, and we can consider the solution space $\mathcal{N}_{P}$
of complex-valued functions (that are necessarily smooth by ellipticity):%
\begin{equation*}
\mathcal{N}_{P}\equiv \left\{ f\in C^{\infty }\left( \Omega ;\mathbb{C}%
\right) :Pf=0\text{ in }\Omega \right\} .
\end{equation*}

\begin{remark}
If $A=B$ is the $n\times n$ identity matrix, then $P=\overline{\partial }$.
Thus in this case $Pf=0$ is the system of Cauchy-Riemann equations on $%
\Omega $, and $\mathcal{N}_{P}$ is the linear space of holomorphic functions
in $\Omega $.
\end{remark}

The linear space $\mathcal{N}_{P}$ is actually an algebra since if $f,g\in 
\mathcal{N}_{P}$, then $P\left( \alpha f+\beta g\right) =\alpha P\left(
f\right) +\beta P\left( g\right) =0$ and $P\left( fg\right) =fP\left(
g\right) +gP\left( g\right) =0$. The real and imaginary parts $u$ of the
functions in $\mathcal{N}_{P}$ are also solutions of the following real
elliptic smooth coefficient divergence form second order equation in $%
\mathbb{R}^{2n}$:%
\begin{equation*}
P^{\ast }Pu=\func{div}\left[ 
\begin{array}{cc}
A^{\ast }A & 0 \\ 
0 & B^{\ast }B%
\end{array}%
\right] \func{grad}u=0.
\end{equation*}%
In particular, from the Schauder interior estimates (see e.g. Theorem 6.9 in 
\cite{GiTr}), solutions $u$ to $P^{\ast }Pu=0$ satisfy a local Lipschitz
inequality%
\begin{equation*}
\left\vert u\left( a\right) -u\left( b\right) \right\vert \leq
C_{K,A,B}\left( \int_{\Omega }\left\vert u\left( x\right) \right\vert
^{2}dx\right) ^{\frac{1}{2}},\ \ \ \ \ a,b\in K,
\end{equation*}%
for every compact subset $K$ of $\Omega $.

Now we define the $P$-Bergman space $A_{P}^{2}\left( \Omega \right) $ on $%
\Omega $ by%
\begin{equation*}
A_{P}^{2}\left( \Omega \right) \equiv \left\{ f\in \mathcal{N}_{P}:\frac{1}{%
\left\vert \Omega \right\vert }\int_{\Omega }\left\vert f\left( x\right)
\right\vert ^{2}dx<\infty \right\} ,
\end{equation*}%
with norm $\left\Vert f\right\Vert _{A_{P}^{2}\left( \Omega \right) }=\sqrt{%
\frac{1}{\left\vert \Omega \right\vert }\int_{\Omega }\left\vert f\left(
x\right) \right\vert ^{2}dx}$. The local Lipschitz inequality for real and
imaginary parts of solutions shows that point evaluations on $%
A_{P}^{2}\left( \Omega \right) $ are continuous, and so $A_{P}^{2}\left(
\Omega \right) $ is a pre-Hilbert function space. Moreover the Montel
property for $A_{P}^{2}\left( \Omega \right) $ is a standard consequence of
the local Lipschitz inequality and the Arzela-Ascoli theorem, and we thus
see that $A_{P}^{2}\left( \Omega \right) $ is a Hilbert function space on $%
\Omega $ with the Montel property. Using that $\mathcal{N}_{P}$ is an
algebra, it is now easy to see with $\mathcal{H}=A_{P}^{2}\left( \Omega
\right) $, that the multiplier algebra $M_{\mathcal{H}}$ of $\mathcal{H}$ is
isometrically isomorphic to $\mathcal{H}^{\infty }\left( \Omega \right) $
equipped with the sup norm $\left\Vert h\right\Vert _{\infty }\equiv
\sup_{a\in \Omega }$ $\left\vert h\left( a\right) \right\vert $. Thus $%
\mathcal{H}=A_{P}^{2}\left( \Omega \right) $ will be multiplier stable if
the kernel functions $k_{a}$ for $\mathcal{H}$ are bounded away from $0$ and 
$\infty $ and lower semicontinuous in $a$, i.e. 
\begin{equation}
\left\Vert k_{a}\right\Vert _{\infty }\text{ is lower semicontinuous in }%
a,\left\Vert \frac{1}{k_{a}}\right\Vert _{\infty }<\infty ,\ \ \ \ \ a\in
\Omega \text{.}  \label{k bound}
\end{equation}%
Thus provided (\ref{k bound}) holds, our alternate Toeplitz corona theorem
reduces the Corona Property for the Banach algebra $\mathcal{H}^{\infty
}\left( \Omega \right) $ of bounded solutions $f$ to $Pf=0$, to the Convex
Poisson Property for the $P$-Bergman space $A_{P}^{2}\left( \Omega \right) $%
. This generalizes the notion of holomorphicity for which such corona
theorems may hold. We do not pursue this direction any further here.

\end{document}